\documentclass[final,12pt]{colt2026final} 

\usepackage[usenames,dvipsnames]{xcolor}
\usepackage{fancyhdr}
\usepackage{amsfonts,amsbsy,amsgen,amscd,mathrsfs}

\usepackage{stackengine}
\usepackage{pifont}
\usepackage{mathtools}
\usepackage{tcolorbox}
\newcommand{\MSE}{\text{MSE}}

\DeclareMathOperator{\E}{\mathbb{E}}
\newcommand{\ind}[1]{\mathbf{1}_{\{#1\}}}

\newcommand{\LLT}{\text{LLT}}

\newcommand{\Z}{\mathbb{Z}}
\newcommand{\A}{\mathcal A}
\newcommand{\D}{\mathcal D}
\newcommand{\Cov}{\operatorname{Cov}}

\newtheorem{questioncounter}{Question}
\newtheorem{question}[questioncounter]{Question}

\definecolor[named]{myLipicsLightGray}{rgb}{0.85,0.85,0.86}

\newenvironment{ShadedBox}{\begin{tcolorbox}[colback=myLipicsLightGray,colframe=myLipicsLightGray,arc=-1pt]\begin{minipage}{0.975\textwidth}}{\end{minipage}\end{tcolorbox}}

\graphicspath{{figures/}}

\usepackage{tikz}
\usetikzlibrary{arrows,chains,matrix,positioning,scopes}
\usepackage{tikz-3dplot}

\usepackage{xcolor}

\definecolor{cit}{rgb}{0.91,0.39,0.16}

\definecolor{dark-gray}{gray}{0.3}

\definecolor{dkgray}{rgb}{.3,.3,.3}
\definecolor{medgray}{rgb}{.5,.5,.5}
\definecolor{ltgray}{rgb}{.7,.7,.7}
\definecolor{dkblue}{rgb}{0,0,.5}
\definecolor{medblue}{rgb}{0,0,.75}
\definecolor{ltblue}{rgb}{0.97,0.97,1}
\definecolor{rust}{rgb}{0.5,0.1,0.1}
\definecolor{ltyellow}{rgb}{1, 1, 0.9}

\usepackage{soul}
\sethlcolor{ltyellow}

\usepackage[normalem]{ulem}

\usepackage{hyperref}

\hypersetup{hidelinks,colorlinks=true,breaklinks=true,urlcolor=cit,citecolor=dkblue,linkcolor=dkblue,bookmarksopen=false,hypertexnames=false}

\usepackage{bookmark}
\bookmarksetup{
open,
numbered,
addtohook={%
\ifnum\bookmarkget{level}=0 %
\bookmarksetup{bold}%
\fi
\ifnum\bookmarkget{level}=-1 %
\bookmarksetup{color=cit,bold}%
\fi
}
}

\usepackage[full]{textcomp}

\usepackage[scaled=.98,sups,osf]{XCharter}%
\usepackage[scaled=1.04,varqu,varl]{inconsolata}%
\usepackage[type1,scaled=0.98]{cabin}%
\usepackage[utopia,vvarbb,scaled=1.07]{newtxmath}
\usepackage[cal=euler]{mathalfa}
\usepackage{bm} %

 \usepackage{microtype} %
 \usepackage[utf8]{inputenc} %
 \usepackage[T1]{fontenc} %
 \usepackage{setspace}
\usepackage[capitalize]{cleveref}
\usepackage{enumitem}

\setlist{noitemsep} %

\title[Minimax Limits of \(k\)-Fold Cross-Validation]{
Minimax Limits of \(k\)-Fold Cross-Validation via Majority
}
\usepackage{times}

\coltauthor{
 \Name{Ido Nachum} \Email{inachum@univ.haifa.ac.il}\\
 \addr University of Haifa, Haifa, Israel.
 \AND\Name{Rüdiger Urbanke}\Email{rudiger.urbanke@epfl.ch}\\
 \addr EPFL, Lausanne, Switzerland.
 \AND
 \Name{Thomas Weinberger}\Email{thomas.weinberger@epfl.ch}\\
 \addr EPFL, Lausanne, Switzerland.}

\begin{document}

\maketitle

\begin{abstract}
We study the mean-squared error of \(k\)-fold cross-validation as a risk estimator, with particular emphasis on how its accuracy depends on the number of folds \(k\). Despite the widespread use of cross-validation, principled guidance for choosing \(k\) is largely absent, mainly due to the complex dependence between fold-wise error estimates. To obtain sharp and interpretable results, we focus on the majority algorithm in binary classification, a minimal yet nontrivial empirical risk minimization procedure. We provide a fine-grained analysis of its cross-validation behavior, showing that even this simple algorithm exhibits subtle and delicate phenomena for which existing theory provides loose and even vacuous bounds. Leveraging this analysis, we introduce a minimax framework for cross-validation risk estimation and prove that no empirical risk minimization algorithm can achieve an \(O(1/n)\) minimax mean-squared error when the number of folds grows with the number of samples \(n\); instead, a lower bound of order \(\Omega(\sqrt{k}/n)\) is unavoidable. Our results reveal fundamental limitations of cross-validation as a data-reuse strategy, clarify gaps and inaccuracies in prior theoretical work, and position the majority algorithm as a natural benchmark that any tight analysis of cross-validation should be able to explain.
\end{abstract}
\begin{keywords}%
 cross-validation, learning theory, algorithmic stability
\end{keywords}

\section{Introduction}

$k$-fold cross-validation (CV) is a widely used model validation technique in statistics, data science, and machine learning; see \citep{arlot2010survey} for a comprehensive survey. Given a collection of errors obtained by training models on subsets of the data and validating them on the remaining observations, CV is typically used for one of two purposes:  
\emph{(risk estimation)} estimating the risk of a model chosen independently of the error computations by averaging the validation errors; or  
\emph{(model selection)} selecting, among a collection of candidate models, the one that minimizes the CV error.

The primary motivation for using $k$-fold CV for risk estimation is that partitioning the data into multiple non-overlapping subsets generally reduces statistical variability compared to relying on a single hold-out set \citep{blum}. Moreover, unlike the empirical (training) error, CV typically mitigates the overly optimistic bias caused by overfitting, a phenomenon that is particularly pronounced for overparametrized models such as neural networks.

Many CV schemes have been proposed, including combinatorial partitioning \citep{combinatCV} and Monte Carlo resampling \citep{montecarlo}. In this work, we focus on the standard variant in which the sample is partitioned into $k$ non-overlapping folds of (approximately) equal size; $5$-fold and $10$-fold CV are especially common in practice. Despite its pervasive use as a validation tool in the empirical sciences, the theoretical properties of CV remain surprisingly poorly understood. In particular, there is still no principled method for choosing the number of folds $k$. As noted by \citet[Sec.~10.3]{arlot2010survey}:

\begin{center}
\begin{quote}
\textit{``VFCV [V-fold cross-validation] is certainly the most popular CV procedure, in particular because of its mild computational cost. Nevertheless, the question of choosing V remains widely open, even if indications can be given towards an appropriate choice.''}
\end{quote}
\end{center}
A central obstacle is that fold-wise errors are dependent, and this dependence is difficult to characterize sharply. As a result, relatively few works provide a precise mathematical analysis of how CV-based risk-estimation accuracy varies with $k$;  see, for example, \citep{specific1,specific2}, which identify optimal choices of $k$ in specific density estimation settings.

To ground the discussion, consider first an extreme baseline: binary classification with the $0$--$1$ loss and a constant learning algorithm $\mathcal{A}_h$ that always outputs the same hypothesis $h$, independent of the sample. If the population risk of $h$ is $p$, then the mean-squared error (MSE) of the $k$-fold CV risk estimator satisfies
\(
\mathrm{MSE}_{\mathrm{CV}}^{(k)}=\frac{p(1-p)}{n}
\)
independent of $k$ since $L(\mathcal{A}_h,\mathcal{D})=p$ and $n\cdot\widehat L_{\mathrm{CV}}^{(k)} \sim \mathrm{Bin}(n,p)$, so $\mathrm{MSE}_{\mathrm{CV}}^{(k)}=\mathrm{Var}(\mathrm{Bin}(n,p))/n^2$.

A natural next step is to consider the simplest nontrivial extension of this baseline: the majority algorithm. Let $S^n=\{(x_i,y_i)\}_{i=1}^n\sim\mathcal{D}^n$ be an i.i.d.\ sample, and define $Y:=\sum_{i=1}^n y_i$. The majority algorithm is
\[
\mathcal{A}_{\mathrm{maj}}(S^n)=
\begin{cases}
h_0:x\mapsto 0, & \text{if } Y\le n/2,\\[4pt]
h_1:x\mapsto 1, & \text{if } Y> n/2,
\end{cases}
\]
where $h_i$ denotes the constant hypothesis outputting $i$.

This is arguably the simplest empirical risk minimization (ERM) extension of the constant predictor: the hypothesis class grows from one constant function to two, and the decision rule depends only on the label counts (ignoring the features entirely). Yet, a precise analysis of its CV behavior is surprisingly delicate. Our study has two components:

\begin{enumerate}
\item $k$-fold CV is explicitly designed to reuse the same data for both training and validation. Accordingly, our aim is to isolate the effect of this data reuse on CV-based risk estimation.
To this end, we benchmark CV-based risk estimation against an idealized baseline in which one trains on all $n$ available samples and, in addition, has access to an independent validation set of size $n$.
\item We conduct a fine-grained analysis of the majority algorithm, which allows us to identify and rigorously analyze several gaps in the existing theoretical literature, including overly loose bounds.
\end{enumerate}

\subsection{Our Contributions}

\paragraph{Fine-grained analysis of Majority.}

In Section~\ref{sec:maj}, we carry out a fine-grained analysis of $\mathrm{MSE}_{\mathrm{CV}}^{(k)}$ for the Majority algorithm under a random-label model. The dominant contribution to the MSE arises from the covariance between validation errors across folds (see the notation in Section~\ref{sec:notation}). In general, this covariance is difficult to analyze due to the complex dependencies induced by data reuse across folds.

We identify a structural property that the Majority algorithm satisfies that allows these dependencies to be handled explicitly. This property, which we call \emph{Factorization} (Lemma~\ref{lem:factor}), reduces the covariance computation to a simpler quantity depending only on the distribution of the sample within a single fold. Leveraging this result, we derive a closed-form expression for the covariance in Theorem~\ref{thm:exactComb}, and hence for $\mathrm{MSE}_{\mathrm{CV}}^{(k)}$. In particular, we show in Theorem~\ref{thm:majority} that  $\mathrm{MSE}_{\mathrm{CV}}^{(k)}$ scales as $\Theta({\sqrt{k}/n})$.

\paragraph{Minimax limits of cross-validation.} 
\citet{kearns} proved that for loss-stable ERMs over VC classes, leave-one-out CV performs essentially no worse than the empirical error. Similar results for general $k$-fold CV were obtained by \citet{anthony}.

Another natural requirement is that CV performs no worse than a hold-out estimate over a single fold of size $n/k$. The work of \citet{blum} confirms this property for a specific (non-standard) cross-validation setting, where the algorithm’s final output on the full sample $S^n$ is the random classifier sampling uniformly at random from the $k$ cross-validated hypotheses.

Importantly, these works do not \emph{quantify} the advantage of CV. To make this notion precise, we define the \emph{minimax cross-validation risk} for a given algorithm \(\mathcal{A}\) as
\[
\mathfrak{R}_{\mathrm{CV}}(\mathcal{A})
:= \min_{k\mid n}\max_{\mathcal{D}} 
   \mathrm{MSE}_{\mathrm{CV}}^{(k)}(\mathcal{A},\mathcal{D}),
\]
which represents the optimal achievable MSE over all choices of $k$ in the absence of knowledge about the underlying distribution $\mathcal{D}$.

 The constant algorithm discussed earlier attains its minimax for uniformly random labels, hence \(\mathfrak{R}_{\mathrm{CV}}(\mathcal{A}_h) =1/(4n)\). This provides a natural baseline or reference point to compare more sophisticated algorithms. Since overly simplistic procedures such as the constant algorithm are not practically useful—yet may achieve similar minimax rates—we focus on (non-constant) ERM algorithms, a standard assumption in statistical learning theory. This leads to the following question:

\begin{ShadedBox}
  \begin{question}\label{q2}
    In classification with 0--1 loss, can any ERM achieve an \(O(1/n)\) minimax rate for CV risk estimation, and if not, how close can it get?
  \end{question}
\end{ShadedBox}

In Section~\ref{sec:minimax_maj}, we show that ERM algorithms cannot achieve a minimax rate of \(O(1/n)\) for $k=\omega_n(1)$ using a reduction to our in-depth study of the Majority algorithm:
For any ERM algorithm and for every $k$ it holds

$$ \max_{\mathcal{D}} 
   \mathrm{MSE}_{\mathrm{CV}}^{(k)}(\mathcal{A},\mathcal{D}) = \Omega( \sqrt{k}/n ).$$ 
Therefore, for any ERM algorithm \(\mathcal{A}\) that achieves the minimax optimum with  \(k^*\) folds, the MSE of cross-validation scales as
\[
\mathfrak{R}_{\mathrm{CV}}(\mathcal{A}) = \Omega\!\left(\sqrt{k^*}/n \right).
\]
Hence, even for the most carefully designed ERM, CV cannot  exploit the entire dataset as if it were using a single hold-out set of size \(n\); there remains a factor of \(\sqrt{k^*}\) in the rate.

\paragraph{Limitations of existing theory and the Majority benchmark.}

A substantial body of work, most notably
\citep{rogers,devroye,blum,kearns,bousquet,kumar2013near}, has yielded important insights into the behavior of CV and significantly shaped the theoretical landscape. Nevertheless, when these results are evaluated against the Majority algorithm, they exhibit systematic shortcomings. In particular, existing guarantees typically suffer from at least one of the following limitations:

\begin{enumerate}

  \item \emph{Arbitrarily loose sufficient conditions.}
Stability-based upper bounds, including those of \citet{kumar2013near}, fail to capture the performance of even the majority algorithm. While the proof techniques underlying the results of  \citet{rogers,devroye,kearns,bousquet} naturally extend to general $k$-fold cross-validation from leave-one-out CV, the resulting guarantees remain fundamentally misaligned with the behavior of Majority.

  \item \emph{Relative rather than absolute guarantees.}
  Several works provide guarantees only in comparison to other error measures rather than in absolute terms. For instance, \citet{kearns} compare leave-one-out CV to empirical error; however, our analysis of Majority shows that these bounds are loose across most regimes of their confidence parameter $\delta$, particularly when considering the MSE of CV loss estimation. 

  \item \emph{Mathematical inaccuracies in prior analyses.}
  Independent of our analysis of Majority, we identify errors in fundamental theorems in \cite{kearns} and \cite{kale2011cross}. Clarifying these issues is important because the affected results make claims about fundamental properties of CV estimation, and correcting them reveals several unresolved theoretical questions.
\end{enumerate}


We discuss the Majority benchmark in Section~\ref{sec:lim} and the mathematical inaccuracies in
Section~\ref{sec:errors}. The benchmark analysis leads to the following conclusion:

\begin{ShadedBox}
 Majority is a natural benchmark and we advocate that demonstrating tightness for this instance should be a minimal requirement for any future bounds on the error of CV.
\end{ShadedBox}

\section{Setup and Notation}\label{sec:notation}

We start by establishing the framework for our investigation. Let \(\mathcal{X}\) be the input space and \(\mathcal{Y}\) the output space, and set \(\mathcal{Z}=\mathcal{X}\times\mathcal{Y}\).
We study (possibly randomized) learning rules \(\mathcal{A}:\mathcal{Z}^\star\to\mathcal{H}\) that map a sample
\(S^n=(Z_1,\dots,Z_n)\in\mathcal{Z}^n\) to a hypothesis \(h=\mathcal{A}(S^n)\in\mathcal{H}\subseteq \mathcal{Y}^{\mathcal{X}}\).
The observations are i.i.d.: \(Z_i\sim\mathcal{D}\), hence \(S^n\sim\mathcal{D}^{ n}\).
The learning algorithm \(\mathcal{A}\) may be randomized; in that case, $\mathbb E_\mathcal A[\cdot]$ denotes expectation with respect to the internal randomness of $\mathcal A$.
As is common in previous works, we assume throughout that \(\mathcal{A}\) is permutation-invariant (symmetric): for any permutation
\(\pi\) of \(\{1,\dots,n\}\),
\(
\mathcal{A}(S^n)=\mathcal{A}(S^n_\pi)\) a.s., where
\(S^n_\pi:=(Z_{\pi(1)},\dots,Z_{\pi(n)})\).

We assume throughout that the number of folds is an integer \(k \ge 2\) with \(k\mid n\).
We partition the index set \(\{1,\dots,n\}\) into \(k\) disjoint blocks \(I_1,\dots,I_k\) of size \(m:=|I_i|=n/k\),
and define \(S_i=\{Z_j:j\in I_i\}\) and \(S_{-i}=S^n\setminus S_i\). We denote the concatenation of two samples $S_i$ and $S_j$ by $S_i \circ S_j$.

Given a loss function \(\ell:\mathcal{Y}\times\mathcal{Y}\to\mathbb{R}\), the \(k\)-fold cross-validation estimator is 
\[
\widehat L_{\mathrm{CV}}^{(k)}(\mathcal A,S^n)=\frac{1}{k}\sum_{i=1}^{k}\widehat L_i^{(k)},
\quad
\widehat L_i^{(k)}=\frac{1}{|S_i|}\sum_{(x,y)\in S_i}\ell\big(\mathcal{A}(S_{-i})(x),\,y\big)
=\frac{k}{n}\sum_{(x,y)\in S_i}\ell\big(\mathcal{A}(S_{-i})(x),\,y\big).
\]
That is, \(\widehat L_i^{(k)}\) is the average loss on the \(i\)th hold-out fold, and
\(\widehat L_{\mathrm{CV}}^{(k)}\) averages these across folds.\footnote{When $k=n$, the definition coincides with leave-one-out cross-validation.} We omit the subscript $\mathrm{CV}$  whenever it is clear from context. Since this work focuses on classification, we will often consider the 0--1 loss defined as $\ell(\hat y,y)=\mathbb 1_{\{\hat y\neq y\}}$.

We assess the performance of cross–validation via the mean-squared error (MSE)
\[
\mathrm{MSE}_{\mathrm{CV}}^{(k)}(\mathcal{A}, \mathcal{D})
:= \mathbb{E}_{\,S^n\sim \mathcal{D}^{ n},\,\mathcal A}
\!\left[\big(   \widehat L_{\mathrm{CV}}^{(k)}(\mathcal A,S^n)   -L(\mathcal A(S^n) )\big)^2\right],
\]
where the population risk is
\(
L(h):=\mathbb{E}_{(x,y)\sim\mathcal D}\big[\ell(h(x),y)\big].
\)

For the \(i\)th fold, we also write
\(
L_i^{(k)}(S^n):=   L(\mathcal A(S_{-i})),
\)
i.e., the risk of the hypothesis trained on the complement \(S_{-i}\) of the \(i\)th hold–out block.

\section{Preliminaries on Algorithmic Stability}
Before positioning our work within the context of previous works, it is instructive to familiarize oneself with commonly used notions of algorithmic stability. While there are many notions of algorithmic stability in the literature, we will focus on two of the most widely used variants. We also note that most classical works on the performance of leave-one-out CV consider the following notions for the special case where $m=1$, while some newer works also consider leave-$m$ notions with $m>1$ \citep{gastpar2024algorithms}.

\begin{definition}[Hypothesis Stability]\label{def:hyp_stab}
We call a pair $(\mathcal A,\mathcal D)$ \textit{hypothesis-stable} with respect to some metric $\mathrm{dist}(\cdot \,, \cdot)$ with parameters $(\beta,m)$ if 
\[
\mathbb E_{S^{n-m}\sim \mathcal D^{n-m}, ~S^{m} \sim \mathcal D^{m}, ~\mathcal A} \Big[\mathrm{dist}\big(\mathcal A(S^{n-m}\circ S^{m}) , \mathcal A(S^{n-m})\big)\Big] \le \beta.
\]
\end{definition}

Intuitively, hypothesis stability is a stronger assumption than necessary for error estimation. It provides a quantitative measure of how similar the hypotheses trained on different folds are to the one obtained from the full dataset. In this sense, a highly hypothesis-stable algorithm behaves almost like a constant algorithm. However, the key factor governing the accuracy of cross-validation (CV) error estimation is not the similarity of hypotheses themselves, but rather the stability of their loss values when a small subset of training samples is removed.

For this reason, it is more natural to require a weaker property, called \emph{loss stability} (or \emph{error stability}). This condition ensures that the per-fold loss estimates remain nearly unbiased, even when the training data is slightly perturbed.
\begin{definition}[Loss Stability]\label{def:lossstab}
We call a pair $(\mathcal A,\mathcal D)$ \textit{loss-stable} with parameters $(\beta,m)$ if 
$$
\mathbb E_{S^{n-m}\sim \mathcal D^{n-m},~S^{m}\sim \mathcal D^{m},~\mathcal A} \Big[\big|L\big(\mathcal A(S^{n-m}\circ S^{m})\big)-L\big(\mathcal A(S^{n-m})\big)\big|\Big] \le \beta.
$$
\end{definition}
Loss stability may appear necessary for accurate risk estimation: each fold estimate is unbiased for the risk of the algorithm trained on \(n-m\) samples, so CV should estimate \(L(\mathcal A(S^n))\) accurately only if this risk is close to \(L(\mathcal A(S^{n-m}))\). However, this reasoning is incomplete because fold estimates are correlated. In fact, pathological algorithms can have poor loss stability while still admitting perfectly accurate CV risk estimates.

\begin{lemma}\label{lem:anticorr}
In the setting of classification with the 0--1 loss, there exist algorithm-distribution combinations whose loss-stability is \(\beta=1/4\), yet whose CV MSE is $0$.
\end{lemma}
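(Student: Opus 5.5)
The plan is to exhibit a single explicit example with $k=2$. The full-sample rule will be allowed to compute the cross-validation estimate itself and then output a hypothesis whose population risk equals that value. This makes the CV MSE identically zero, while the fold-level rule is made as unstable as possible.

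\textbf{Distribution.} Take $\mathcal X=[0,1]$ and let $\mathcal D$ draw $x\sim\mathrm{Unif}[0,1]$ with deterministic label $y=0$. For $c\in[0,1]$ let $g_c(x)=\mathbb 1_{\{x<c\}}$, so that $L(g_c)=c$. Let $h_0,h_1$ be the constant hypotheses, with $L(h_0)=0$ and $L(h_1)=1$. Fix $n=2m$, so $k=2$ and $n-m=m$.

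\textbf{Algorithm.} On samples of size $m$, the algorithm outputs $h_{b(S)}$, where $b(S)=\mathbb 1_{\{\mathrm{frac}(\sum_{x\in S}x)<1/2\}}$. This bit is a symmetric function of $S$, and under $\mathcal D^m$ it is exactly $\mathrm{Bernoulli}(1/2)$, since the fractional part of a sum of independent uniforms is uniform. On samples of size $n$, the algorithm computes $c=\tfrac12\big(b(S_1)+b(S_2)\big)$ for the canonical split into $S_1,S_2$ used by CV, and outputs $g_c$.

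\textbf{Verification.} I would check three facts in order.
\begin{enumerate}
\item Because $\mathcal A(S_{-i})$ is constant, its validation loss on fold $i$ is exactly $b(S_{-i})$ almost surely. Hence $\widehat L^{(2)}_{\mathrm{CV}}=\tfrac12\big(b(S_2)+b(S_1)\big)=c=L(\mathcal A(S^n))$ pointwise, so $\mathrm{MSE}^{(2)}_{\mathrm{CV}}=0$.
\item For loss stability with $(\beta,m)$, write $S^n=S^{n-m}\circ S^m$, with $b_1=b(S^{n-m})$ and $b_2=b(S^m)$ independent $\mathrm{Bernoulli}(1/2)$. Then $L(\mathcal A(S^n))-L(\mathcal A(S^{n-m}))=\tfrac{b_1+b_2}{2}-b_1=\tfrac{b_2-b_1}{2}$.
\item The expected absolute value of this difference is $\tfrac12\cdot\Pr(b_1\neq b_2)=\tfrac14$. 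This is the exact loss-stability constant, so no smaller $\beta$ works.
\end{enumerate}

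\textbf{Main obstacle: symmetry.} The step I expect to need the most care is the standing assumption that $\mathcal A$ is permutation-invariant. The fold-level rule is symmetric, but the size-$n$ rule uses the CV partition, which is not permutation-invariant.

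There are two ways to handle this. The first is to note that the CV estimator itself is defined with respect to a fixed partition $I_1,I_2$, and that the lemma is an existence statement about pathological pairs. In that case the simplest resolution is to state the example for that fixed partition.

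If full symmetry is insisted upon, I would try to replace $c$ by a symmetric statistic of $S^n$ that still agrees with the fold-wise average for the partition in use. One candidate is to let the algorithm read the fold boundary from the data via order statistics. Another is to encode $b$ additively, so that the full-sample quantity is partition-free. I expect this to be the only delicate point; the stability and MSE calculations above are routine.
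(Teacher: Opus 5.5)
Your three verification steps are correct: with noiseless labels, the validation loss of $h_{b(S_{-i})}$ on fold $i$ equals its risk $b(S_{-i})$, so $\widehat L^{(2)}_{\mathrm{CV}}=L(g_c)$ pointwise, and the stability constant is exactly $\tfrac12\Pr(b_1\neq b_2)=\tfrac14$. The gap is the symmetry issue you flagged, and neither of your remedies closes it. The paper assumes throughout that $\mathcal A$ is permutation-invariant, and for $m\ge 2$ your size-$n$ rule is not.

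In fact, no symmetric size-$n$ rule can be paired with your fold-level rule. The quantity $\widehat L^{(2)}_{\mathrm{CV}}=\tfrac12\bigl(b(S_1)+b(S_2)\bigr)$ is fixed by the fold-level rule alone. Zero MSE would therefore force it to be almost surely invariant under permutations of $S^n$. But swapping one point of $S_1$ with one point of $S_2$ shifts both fractional parts and changes $b(S_1)+b(S_2)$ on a set of positive probability. For example, with $m=2$ and $x_1,x_2\approx 0.1$, $x_3,x_4\approx 0.6$, swapping $x_1$ and $x_3$ takes the sum from $2$ to $0$. Reading the fold boundary from order statistics cannot help, because a symmetric rule sees only the multiset, and fold membership is not a function of it. For $m\ge 2$ you would have to redesign the fold-level rule so that the CV estimate is itself a symmetric statistic.

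The repair is immediate: take $n=2$, $m=1$, which is also the size the paper uses. Then $b$ of a single point $(x,0)$ is $\mathbb 1_{\{x<1/2\}}$, and $c=\tfrac12\bigl(b(z_1)+b(z_2)\bigr)$ is symmetric in $(z_1,z_2)$. Your algorithm then meets the standing assumption, and your verification goes through verbatim, giving zero MSE and $\beta=1/4$ exactly.

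With this fix your proof is a valid alternative to the paper's. The paper keeps the fold-level output constant ($h_0$) and uses labels $y=\mathbb 1_{\{x>1/2\}}$. The CV estimate is then automatically the symmetric statistic $p(S)=\sum_i y_i/n$, and the full-sample hypothesis $\mathbb 1_{\{1/2-p/2<x<1-p/2\}}$ is built to have risk exactly $p(S)$. The instability is $\mathbb E|p(S)-1/2|$, which equals $1/4$ at $n=2$.

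You instead put the randomness in the fold-level hypotheses and use noiseless labels, so that validation loss equals risk. This gives $\beta=1/4$ for every $m$, but it is compatible with symmetry only at $m=1$. The paper's mechanism is symmetric for every $n$ and $k$, but attains $1/4$ only at $n=2$.
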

\begin{proof} See Appendix~\ref{proof:anti}.
\end{proof}
This result contradicts Theorem~5.3 of \citet{kearns}; as discussed in Appendix~\ref{app:err}, the discrepancy stems from an error in that theorem. In general, it remains unclear in which cases loss stability is strictly necessary for small MSE.

\section{Prior Work}
The works of \citet{rogers,devroye,devroye2} were among the first to establish rigorous stability-based performance guarantees for classification problems using leave-one-out CV. Although in their works, they assume that the considered algorithms are \emph{local} (e.g. nearest neighbors) and the data distributions are arbitrary, their results directly generalize to the class of hypothesis-stable algorithms (in which case the bounds are no longer distribution-free).

The well-known work by \citet{bousquet} provided a streamlined presentation of classical results and novel error bounds for leave-one-out CV and the empirical error under various strengthened assumptions on algorithmic stability and/or the loss functional.

Estimating the population loss in an algorithm-dependent manner is closely related to statistical learning theory (see, e.g., \citep{UML}). The principal aim of this field is the development of generalization bounds, typically in the form of high-probability upper bounds $L(\mathcal A(S))<\widehat L_{\text{emp}}(\mathcal A(S),S)+C$, where $\widehat L_{\text{emp}}(\mathcal A(S),S)$ denotes the empirical error over the training set and the generalization measure $C$ accounts for the over-optimism of the empirical error induced by the complexity of the model. 
A classical result \citep{vapnik,blumer} for binary classification with 0--1 loss states it is necessary and sufficient to let $C=\Theta (\sqrt {d/n})$ to ensure that the generalization bound holds in a tight manner even for the worst-case distribution, where $d$ is the VC dimension, a combinatorial measure of the richness of the hypothesis class $\mathcal H$ associated with the algorithm. 
These bounds are often too pessimistic because they are not sensitive to the (possibly benign) characteristics of the specific data distribution at hand. Moreover, it can be shown that in overparametrized settings (which are ubiquitous in machine learning), generalization measures that are not distribution-dependent face limitations both empirically \citep{jiangfantastic, dziugaite2020search} and theoretically \citep{nachum2024fantastic,gastpar2024algorithms}. 
With this in mind, CV becomes conceptually interesting as a flexible alternative to generalization measures for overparametrized settings, where the empirical error is typically uninformative, and a distribution-dependent measure is required---though admittedly CV is no silver bullet (theoretical bounds require estimating the algorithm's stability, and CV can be computationally expensive). 

In light of this comparison, a sound minimal requirement is that CV performs at least as well as the empirical error. This question, formalized in terms of so-called \textit{sanity-check bounds}, has been studied by \citet{kearns}. One of their central results is that, for loss-stable empirical risk minimizers over VC classes, leave-one-out CV is guaranteed to perform essentially no worse than the empirical error.
\cite{anthony} derived similar results for the more general case of $k$-fold CV.

Yet another natural sanity-check is to require CV to perform no worse than a single hold-out set of corresponding size. The work of \citet{blum} shows that this does indeed hold for a specific (non-standard) cross-validation setting.

An influential line of work \citep{bengioUnbiased,bengio2} considers the limitations of unbiased estimation of the variance of CV.

A more recent line of work develops upper bounds on the MSE based on novel notions of loss stability \citep{kale2011cross, kumar2013near}.
As discussed in Appendix~\ref{app:errKale}, the main theorem of \citet{kale2011cross} contains a technical error, which makes it difficult to assess the implications of the stated results.
The follow-up work introduces a version of loss stability that leads to a stronger result \citep[Theorem 1]{kumar2013near} since the related stability parameter is a lower bound on the one appearing in \citep[Theorem 2]{kale2011cross}.
In both works, the authors aim to bound the performance of the non-standard algorithm that at test time picks one of the  cross-validated hypotheses uniformly at random, while we directly bound the MSE of the full-sample hypothesis.

\citet{bayle} develop a central limit theorem for $k$-CV error around the $k$-fold test error under an abstract asymptotic linearity condition and propose consistent variance estimators that yield asymptotically exact confidence intervals and hypothesis tests for comparing two learning algorithms.

\section{Results}\label{sec:results}

We first present an exact analysis of the majority algorithm in binary classification
with the 0--1 loss. Accordingly, the results in Sections~\ref{sec:maj}--\ref{sec:lim} are specific to
the binary setting $\mathcal Y=\{0,1\}$ and to
$\ell(\hat y,y)=\mathbb 1_{\{\hat y\neq y\}}$, with the exception of Lemma~\ref{lem:factor}. The subsequent minimax lower bound is
proved by reducing arbitrary ERM algorithms to this binary majority problem. Since
binary classification is a special case of multiclass classification, our lower bounds directly extend to multiclass classification with 0--1 loss.

\paragraph{Admissible fold sizes.} Throughout this section we work with equal-size folds: whenever a fold count \(k\) is considered, we assume \(k\mid n\) and write \(m=n/k\). This convention is only meant to suppress inessential rounding effects. If \(k\nmid n\), one could instead use approximately equal folds, or carry all arguments through with floor and ceiling corrections; for Majority, this only replaces the single exchangeable fold covariance by fold-size-dependent variants and does not change the covariance mechanism or its qualitative dependence on \(k\). Thus, fixed-\(k\) asymptotics are understood along sequences with \(k\mid n\); for example statements comparing \(k=2\) and \(k=3\) are understood along subsequences with \(6\mid n\).

\subsection{MSE of Cross-Validation for Majority}\label{sec:maj}

We analyze the majority algorithm under a distribution $\mathcal{D}$ whose marginal over $\mathcal{X}$ is arbitrary, and whose labels $y_i$ are i.i.d.\ draws from $\mathcal{Y} = \{0,1\}$ with $y_i \sim \mathrm{Ber}(1/2)$. 
In this case, $Y \sim \mathrm{Bin}(n, 1/2)$, and the population loss of $\mathcal{A}_{\mathrm{maj}}$ equals $1/2$, independent of both the sample $S$ and the sample size $n$. Consequently, analyzing its MSE reduces to controlling the covariance between folds.

\begin{proposition}\label{prop:majMSE}
The MSE of the majority algorithm with random  labels equals \[\frac{k-1}{k}\Cov(\widehat L_1^{(k)},\widehat L_2^{(k)}) + \frac{1}{4n}.\]
\end{proposition}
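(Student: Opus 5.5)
Since the labels are i.i.d.\ $\mathrm{Ber}(1/2)$ and Majority only ever outputs a constant hypothesis $h_0$ or $h_1$, every hypothesis it can produce has population risk exactly $1/2$. In particular $L(\mathcal A_{\mathrm{maj}}(S^n))=1/2$ deterministically. Hence
\[
\mathrm{MSE}_{\mathrm{CV}}^{(k)}=\E\big[(\widehat L^{(k)}-1/2)^2\big].
\]
Each fold estimate is unbiased for $1/2$. Indeed, conditionally on $S_{-i}$, the hypothesis $\mathcal A(S_{-i})$ is fixed and constant, and the labels in $S_i$ are independent of $S_{-i}$. So $m\widehat L_i^{(k)}\mid S_{-i}\sim\mathrm{Bin}(m,1/2)$. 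It follows that $\E[\widehat L^{(k)}]=1/2$, and the MSE equals $\mathrm{Var}(\widehat L^{(k)})$.

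Next I expand the variance of the average:
\[
\mathrm{Var}\Big(\frac1k\sum_i \widehat L_i^{(k)}\Big)=\frac1{k^2}\sum_i\mathrm{Var}(\widehat L_i^{(k)})+\frac1{k^2}\sum_{i\neq j}\Cov(\widehat L_i^{(k)},\widehat L_j^{(k)}).
\]
By exchangeability of the i.i.d.\ sample and permutation invariance of the algorithm, all variances are equal. Likewise, all pairwise covariances equal $\Cov(\widehat L_1^{(k)},\widehat L_2^{(k)})$. There are $k(k-1)$ ordered pairs, so the covariance part contributes $\frac{k-1}{k}\Cov(\widehat L_1^{(k)},\widehat L_2^{(k)})$.

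For the diagonal term, I use the law of total variance conditional on $S_{-i}$. The conditional mean is $1/2$, which is constant, so its variance vanishes. The conditional variance is
\[
\mathrm{Var}(\mathrm{Bin}(m,1/2))/m^2=\frac{1}{4m}.
\]
Thus $\mathrm{Var}(\widehat L_i^{(k)})=\frac{1}{4m}=\frac{k}{4n}$. The diagonal part then gives $\frac1{k^2}\cdot k\cdot\frac{k}{4n}=\frac{1}{4n}$. Adding the two parts yields the claimed formula.

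The only step needing care is the conditional-binomial argument. One must check that, given $S_{-i}$, the number of errors on $S_i$ is $\mathrm{Bin}(m,1/2)$ whichever constant is predicted. This holds because the loss is $\mathbb 1\{y\neq c\}$ and $y\sim\mathrm{Ber}(1/2)$ is independent of $c$. The same conditioning also shows that the covariance term need not vanish, since folds $1$ and $2$ share training data. That covariance is exactly what is left unevaluated in the statement.
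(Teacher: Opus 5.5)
Your proof is correct and follows the paper's argument. Since the risk is identically $1/2$, the MSE equals the variance of the CV average, which splits as $\frac1k\mathrm{Var}(\widehat L_1)+\frac{k-1}{k}\Cov(\widehat L_1,\widehat L_2)$, and $\mathrm{Var}(\widehat L_1)=1/(4m)$ follows from the conditional binomial law. Your law-of-total-variance step is a more explicit form of the paper's remark that $\widehat L_1$ is independent of $S_{-1}$.
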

\begin{proof}
Since we assume the labels are randomly distributed, $L(\mathcal{A}_{\mathrm{maj}}(S^n)) = 1/2$ almost surely and the estimator is unbiased, i.e., $\mathbb{E}[\widehat L_{\mathrm{CV}}^{(k)}] = 1/2$. Hence the MSE reduces to the variance, which can be decomposed as
\(
    \mathrm{MSE}_{\mathrm{CV}}^{(k)}(\mathcal{A}_{\mathrm{maj}}, \mathcal{D}) = \mathrm{Var}\left( \frac{1}{k} \sum_{i=1}^k \widehat L_i \right)= \frac{1}{k} \mathrm{Var}(\widehat L_1) + \frac{k-1}{k} \mathrm{Cov}(\widehat L_1, \widehat L_2)
\)
where
$\mathrm{Var}(\widehat L_1) =\frac{1}{4m}$
since $\widehat L_1$ is independent of $S_{-1}$ and $m \cdot\widehat L_1 \sim \mathrm{Bin}(m, 1/2)$.
\end{proof}

\subsubsection{Fold-Covariance and Loss Factorization}
As we saw, for the simple class of algorithms with constant loss (such as Majority), the MSE reduces to the fold-covariance.
In such settings, the fundamental challenge in analyzing cross-validation is the complex dependency structure introduced by the overlapping training sets. For any two distinct folds $i$ and $j$, the training sets $S_{-i}$ and $S_{-j}$ share a large amount of data $S \setminus (S_i \circ S_j)$. This overlap creates a coupling between the cross-validated hypotheses $h_i=\A(S_{-i})$ and $h_j=\A(S_{-j})$ that is notoriously difficult to quantify.

We identify a structural property—\textit{loss factorization}—that allows us to disentangle these dependencies. If the empirical loss can be separated into a local \textit{test statistic} $\phi(S_i)$ and a global \textit{decision function} $\psi(S_{-i})$, the complex interaction collapses into a simpler form.

Formally, the factorization $\widehat{L}_i = \mu + \phi(S_i)\psi(S_{-i})$ models a class of algorithms where the interaction between the folds is separable: the validation fold $S_i$ influences the loss only through the scalar summary $\phi(S_i)$, and the training set $S_{-i}$ influences the loss only through the response $\psi(S_{-i})$.

\begin{lemma}[Factorization Lemma]\label{lem:factor}
    Let $S':=S\backslash (S_1 \circ S_2)$ denote the shared training set of the first two cross-validated hypotheses. Assume that the empirical losses of the cross-validated hypotheses have the form
	\[
		\widehat{L}_i = \mu +\phi(S_i)\psi(S_{-i}),
	\]
	where $\mu = \mathbb{E}[\widehat{L}_i]$. Then,
	\[
		\mathrm{Cov}(\widehat{L}_1, \widehat{L}_2) = \mathbb{E}_{S'} \left[ \left( \mathbb{E}_{S_1} [ \phi(S_1)\psi(S'\circ S_1) \mid S' ] \right)^2 \right].
	\]
    Moreover, if $\phi(S_1)$ is centered ($\mathbb{E}[\phi(S_1)] = 0$), then
	\[
		\mathrm{Cov}(\widehat{L}_1, \widehat{L}_2) = \mathbb{E}_{S'} \left[ \mathrm{Cov}(\phi(S_1), \psi(S'\circ S_1) \mid S')^2 \right].
	\]
\end{lemma}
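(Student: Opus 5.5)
The plan is to compute the covariance directly from the assumed factorization. The key observation is that once we condition on the shared training set $S'$, the two fold terms become conditionally independent and identically distributed.

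First, since $\mu=\mathbb{E}[\widehat L_i]$, the factorization gives $\mathbb{E}[\phi(S_i)\psi(S_{-i})]=0$ for each $i$. Hence
\[
\mathrm{Cov}(\widehat L_1,\widehat L_2)=\mathbb{E}\big[\phi(S_1)\psi(S_{-1})\,\phi(S_2)\psi(S_{-2})\big].
\]
Next, I would rewrite the training sets in terms of $S'$. We have $S_{-1}$ consisting of $S'$ together with $S_2$, and $S_{-2}$ consisting of $S'$ together with $S_1$. Permutation invariance of $\mathcal A$ is assumed throughout Section~\ref{sec:notation}. Since $\psi$ is determined by the trained hypothesis (for Majority it depends only on label counts), $\psi$ is symmetric in its argument. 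So we may write $\psi(S_{-1})=\psi(S'\circ S_2)$ and $\psi(S_{-2})=\psi(S'\circ S_1)$. The integrand then becomes
\[
\big[\phi(S_2)\psi(S'\circ S_2)\big]\cdot\big[\phi(S_1)\psi(S'\circ S_1)\big].
\]
This pairs each validation fold with the training set that contains it.

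The main step is to condition on $S'$. The three blocks $S_1,S_2,S'$ are independent, so given $S'$ the two bracketed factors are conditionally independent. They are also identically distributed, since $S_1\overset{d}{=}S_2$ and both are independent of $S'$. By the tower property the expectation therefore equals $\mathbb{E}_{S'}\big[g(S')^2\big]$, where $g(S'):=\mathbb{E}_{S_1}[\phi(S_1)\psi(S'\circ S_1)\mid S']$. This is the first claim.

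For the centered case, independence of $S_1$ and $S'$ gives $\mathbb{E}[\phi(S_1)\mid S']=\mathbb{E}[\phi(S_1)]=0$. Therefore
\[
g(S')=\mathrm{Cov}\big(\phi(S_1),\psi(S'\circ S_1)\mid S'\big),
\]
and squaring yields the second formula.

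The step I expect to require the most care is the bookkeeping in the middle step. One must recognize that the cross terms re-pair as $\phi(S_1)$ with $\psi(S'\circ S_1)$, rather than with $\psi(S'\circ S_2)$. One must also justify this re-pairing via the symmetry of $\psi$ under reordering of the concatenated sample. If $\psi$ were not symmetric, I would state that symmetry as an explicit assumption inherited from permutation invariance of $\mathcal A$.
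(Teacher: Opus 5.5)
Your proposal is correct and follows essentially the same route as the paper: you write the covariance as $\mathbb{E}[\phi(S_1)\psi(S'\circ S_2)\,\phi(S_2)\psi(S'\circ S_1)]$, regroup the factors into $\phi(S_1)\psi(S'\circ S_1)$ and $\phi(S_2)\psi(S'\circ S_2)$, use that these are conditionally i.i.d.\ given $S'$, and handle the centered case with the conditional covariance identity. Two steps you spell out, namely $\mathbb{E}[\phi(S_i)\psi(S_{-i})]=0$ and the use of permutation invariance to write $S_{-1}$ as $S'\circ S_2$, are used only implicitly in the paper; the regrouping itself is just commutativity of the scalar product.
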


\begin{proof}
	Due to the form of the empirical fold loss, the covariance is equal to the inner product
	\begin{align*}
		\mathrm{Cov}(\widehat{L}_1, \widehat{L}_2) &= \mathbb{E} \left[ \phi(S_1)\psi(S'\circ S_2) \cdot \phi(S_2)\psi(S'\circ S_1) \right]\\
        &=\mathbb{E} \left[ \phi(S_1)\psi(S'\circ S_1) \cdot \phi(S_2)\psi(S'\circ S_2) \right].
	\end{align*}
	Conditioning on $S'$, the terms $T_1:=\phi(S_1)\psi(S'\circ S_1)$ and $T_2:=\phi(S_2)\psi(S'\circ S_2)$ become independent and identically distributed (since $S_1 \stackrel{d}{=} S_2$ and they are disjoint from $S'$):
	\begin{align*}
		\mathbb{E}[T_1 T_2 \mid S'] = \mathbb{E}_{S_1} [ T_1 \mid S' ] \cdot \mathbb{E}_{S_2} [ T_2 \mid S' ] = \left( \mathbb{E}_{S_1} [ T_1 \mid S' ] \right)^2.
	\end{align*}
	Taking the expectation over $S'$ yields the result.

    \emph{Centered case:} by the definition of conditional covariance,
    \[
\mathbb E[\phi(S_1)\psi(S'\circ S_1)\mid S']=
\operatorname{Cov}(\phi(S_1),\psi(S'\circ S_1)\mid S')
+
\mathbb E[\phi(S_1)\mid S']
\mathbb E[\psi(S'\circ S_1)\mid S'].
\]
Since \(S_1\) is independent of \(S'\) and \(\mathbb E[\phi(S_1)]=0\), the second term vanishes.
\end{proof}

The term $\mathrm{Cov}(\phi(S_1), \psi(S' \circ S_1) \mid S')$ measures how strongly the decision function $\psi$ (trained on the augmented set $S' \circ S_1$) correlates with the specific statistical fluctuations $\phi$ of the set $S_1$ that was added to it.
In stable algorithms, adding $S_1$ to the training set should have negligible impact on the decision, leading to near-zero covariance. In unstable algorithms (like Majority near the decision boundary), the correlation is high, and the lemma precisely quantifies this instability.

Indeed, Majority is multiplicatively separable in the sense discussed above.
\begin{corollary}\label{cor:majFactor}
The majority algorithm with uniformly random labels admits the fold-covariance
	\begin{align*}
		\mathrm{Cov}(\widehat{L}_1, \widehat{L}_2) 
		&= 4\cdot  \mathbb{E}_{Y} \left[ \mathrm{Cov}\bigg(\frac{X_m}{m}, \mathbb{1}_{\{X_m > \theta - Y\}} \bigg|\, Y\bigg)^2 \right]
	\end{align*}
    where $\theta:=(n-m)/2$, $X_t\sim \mathrm{Bin}(t,1/2)$ and $Y\sim \mathrm{Bin}(n-2m,1/2)$.
\end{corollary}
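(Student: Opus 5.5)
The plan is to verify that Majority with random labels fits the hypothesis of Lemma~\ref{lem:factor} with a centered $\phi$, and then identify the conditional covariance explicitly.

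\textbf{Step 1: the factorization.} Let $h_i=\mathcal A_{\mathrm{maj}}(S_{-i})$ and let $Y_{-i}$ denote the label sum of $S_{-i}$, a set of size $n-m$. Then $h_i=h_1$ exactly when $Y_{-i}>(n-m)/2=\theta$. Write $X^{(i)}$ for the label sum of fold $S_i$. The hold-out loss is $X^{(i)}/m$ if $h_i=h_0$, and $1-X^{(i)}/m$ if $h_i=h_1$. Using $\mathbb 1_{\{Y_{-i}\le\theta\}}=1-\mathbb 1_{\{Y_{-i}>\theta\}}$, this becomes
\[
\widehat L_i=\frac{X^{(i)}}{m}+\mathbb 1_{\{Y_{-i}>\theta\}}\Big(1-\frac{2X^{(i)}}{m}\Big)=\frac12+\Big(\frac{X^{(i)}}{m}-\frac12\Big)\Big(1-2\,\mathbb 1_{\{Y_{-i}>\theta\}}\Big).
\]
So the factorization holds with $\mu=1/2$, $\phi(S_i)=X^{(i)}/m-1/2$ and $\psi(S_{-i})=1-2\,\mathbb 1_{\{Y_{-i}>\theta\}}$. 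Since $X^{(i)}\sim\mathrm{Bin}(m,1/2)$, $\phi$ is centered. By Proposition~\ref{prop:majMSE}'s proof, $\mu=\mathbb E[\widehat L_i]=1/2$, so the lemma's requirement on $\mu$ is met.

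\textbf{Step 2: apply the centered case of Lemma~\ref{lem:factor}.} With $S'$ the complement of $S_1\circ S_2$, of size $n-2m$, the label sum of $S'\circ S_1$ is $Y+X_m$. Here $Y\sim\mathrm{Bin}(n-2m,1/2)$ is the label sum of $S'$ and $X_m=X^{(1)}$ is independent of it. Thus $\psi(S'\circ S_1)=1-2\,\mathbb 1_{\{X_m>\theta-Y\}}$.

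Conditional on $S'$, the conditional covariance depends on $S'$ only through $Y$. Additive constants drop out, and the factor $-2$ pulls out and is squared:
\[
\mathrm{Cov}(\phi,\psi\mid S')^2=4\,\mathrm{Cov}\Big(\frac{X_m}{m},\mathbb 1_{\{X_m>\theta-Y\}}\,\Big|\,Y\Big)^2.
\]
Taking expectation over $Y$ gives the stated formula.

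\textbf{Main obstacle.} The only delicate point is the bookkeeping of the tie convention. Because $Y\le n/2$ maps to $h_0$, the threshold is strict ($>$). This also needs to be checked against the $m$-sample count in $S_{-i}$, whose size is $n-m$, so the relevant threshold is $\theta=(n-m)/2$ rather than $n/2$. The rest is mechanical.
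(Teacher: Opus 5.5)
Your proposal is correct and follows the paper's proof. Like the paper, you write $\widehat L_i=\tfrac12+\phi(S_i)\psi(S_{-i})$ with a centered fold statistic and a threshold indicator on $S_{-i}$, apply the centered case of Lemma~\ref{lem:factor}, and obtain the factor $4$ from $(-2)^2$. The only difference is cosmetic: the paper's $\phi=1-2X_i/m$ and $\psi=\mathbb{1}_{\{Y_{-i}>\theta\}}-\tfrac12$ are your pair rescaled by $-2$ and $-\tfrac12$, so the product $\phi\psi$ is identical.
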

\begin{proof}
	Let $X_i:=\sum_{(x,y)\in S_i} y$ and $Y_{-i}:=\sum_{(x,y)\in S \backslash S_i} y$. It holds that
    \begin{align*}
    \widehat{L}_i &= \frac{X_i}{m} \mathbb{1}_{\{Y_{-i} \le \theta\}}+ \frac{m-X_i}{m} \mathbb{1}_{\{Y_{-i} > \theta\}} 
    = \frac{X_i}{m} \left(1 - \mathbb{1}_{\{Y_{-i} > \theta\}}\right) + \bigg[1-\frac{X_i}{m} \bigg] \mathbb{1}_{\{Y_{-i} > \theta\}} \\
    &= \bigg[1-\frac{2X_i}{m} \bigg](\mathbb{1}_{\{Y_{-i} > \theta\}}-1/2)+\frac12.
\end{align*}
	This matches the form of Lemma~\ref{lem:factor} with:
	\begin{itemize}
		\item Test Statistic: $\phi(S_i) = 1-\frac{2X_i}{m}$.
		\item Decision function: $\psi(S_{-i}) = \mathbb{1}_{\{Y_{-i} > \theta\}} - \frac{1}{2}$.
		\item For uniformly random labels, $\mathbb{E}[X_i] = m/2$, so $\mu = \mathbb E[\widehat L_i]=\frac{1}{2}$ and $\mathbb{E}[\phi(S_i)] = 0$.
	\end{itemize}
	Since shifting $\phi,\psi$ by constants does not change their covariance, the statement follows.
\end{proof}

\subsubsection{Precise Analysis of the MSE}
For the remainder of this section, let us denote (with slight abuse of notation) $\Cov(n,m)\equiv \Cov(\widehat L_1,\widehat L_2)$ to highlight the roles of $n$ and $m$.

With the structural decomposition of the covariance established, we now turn to a precise quantitative analysis, transforming the probabilistic factorization into an exact combinatorial expression and, finally, deriving sharp non-asymptotic bounds on the MSE.

A key analytical step is the reduction of this covariance term to a point evaluation of the binomial probability mass function. 
\begin{lemma}
In the setting of Corollary~\ref{cor:majFactor}, it holds that
\begin{equation*}
    \mathrm{Cov}\left(\frac{X_m}{m}, \mathbb{1}_{\{X_m > \theta - Y\}} \;\bigg|\; Y\right) = \frac{1}{4} \mathbb{P}\big(X_{m-1} = \lfloor \theta - Y \rfloor \mid Y\big).
\end{equation*}
\end{lemma}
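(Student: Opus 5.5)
Conditioning on $Y$ fixes the threshold $t:=\theta-Y$, and $X_m\sim\mathrm{Bin}(m,1/2)$ is independent of $Y$. So it suffices to prove, for a fixed real $t$, the deterministic identity
\[
\Cov\!\big(X_m,\mathbb 1_{\{X_m>t\}}\big)=\tfrac{m}{4}\,\mathbb P\big(X_{m-1}=\lfloor t\rfloor\big),
\]
and then divide by $m$. The plan is to write $X_m=\sum_{j=1}^m B_j$ with i.i.d.\ $B_j\sim\mathrm{Ber}(1/2)$. By bilinearity and exchangeability,
\[
\Cov\big(X_m,\mathbb 1_{\{X_m>t\}}\big)=m\,\Cov\big(B_1,\mathbb 1_{\{B_1+X'_{m-1}>t\}}\big),
\]
where $X'_{m-1}=\sum_{j\ge2}B_j\sim\mathrm{Bin}(m-1,1/2)$ is independent of $B_1$. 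This reduces everything to the covariance of a single fair coin with a monotone function of it.

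For a fair Bernoulli $B$ and any function $g$, one has $\Cov(B,g(B))=\tfrac14\big(g(1)-g(0)\big)$. Apply this conditionally on $X'_{m-1}$, or equivalently average over $X'_{m-1}$ using the independence of $B_1$ and $X'_{m-1}$. The covariance then becomes
\[
\tfrac14\,\mathbb E\big[\mathbb 1_{\{X'_{m-1}+1>t\}}-\mathbb 1_{\{X'_{m-1}>t\}}\big]=\tfrac14\,\mathbb P\big(t-1< X'_{m-1}\le t\big).
\]
The law of total covariance contributes no extra term here, because $\mathbb E[B_1\mid X'_{m-1}]=1/2$ is constant.

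The only point requiring care is identifying the event $\{t-1<X'_{m-1}\le t\}$ for integer-valued $X'_{m-1}$ and arbitrary real $t$. The half-open interval $(t-1,t]$ contains exactly one integer, namely $\lfloor t\rfloor$. This holds whether or not $t$ is an integer; $t=\theta-Y$ may be a half-integer depending on the parity of $n-m$. The event is therefore $\{X'_{m-1}=\lfloor t\rfloor\}$.

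Combining the steps gives $\Cov(X_m/m,\mathbb 1_{\{X_m>\theta-Y\}}\mid Y)=\tfrac1m\cdot m\cdot\tfrac14\,\mathbb P(X_{m-1}=\lfloor\theta-Y\rfloor\mid Y)$, which is the claim. I expect no real obstacle beyond this careful handling of the strict inequality and the floor.
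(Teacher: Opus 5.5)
Your proof is correct and is essentially the paper's argument. Both peel a single fair coin off $X_m$ and reduce the covariance to $\frac m4\bigl[\mathbb P(X_{m-1}\ge\lfloor t\rfloor)-\mathbb P(X_{m-1}\ge\lfloor t\rfloor+1)\bigr]$: the paper uses the absorption identity $j\binom{m}{j}=m\binom{m-1}{j-1}$ plus conditioning on the last coin, while you use exchangeability and $\Cov(B,g(B))=\tfrac14(g(1)-g(0))$, and your floor step $(t-1,t]\cap\mathbb Z=\{\lfloor t\rfloor\}$ matches the paper's observation that $X_m>t$ iff $X_m\ge\lfloor t\rfloor+1$.
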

\begin{proof}
See Appendix~\ref{app:maj_exact-combinatorial}.
\end{proof}
This relation is significant because it transforms the stability analysis from a moment-estimation problem into a counting problem. 

Substituting this probability mass directly into the factorization formula converts the expectation over the shared training set count $Y$ into a discrete convolution of binomial coefficients.
\begin{theorem}[Exact Combinatorial Form of Fold-Covariance]\label{thm:exactComb}
For $1\le m\le n/2$, $m|n$, we have
	\[
	\Cov(n,m)=
	2^{-n}\sum_{j=0}^{m-1}\binom{m-1}{j}^2
	\binom{n-2m}{\lfloor (n-m)/2\rfloor-j}.
	\]
\end{theorem}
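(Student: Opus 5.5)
Combining Corollary~\ref{cor:majFactor} with the preceding lemma, the factor $4$ and the two factors $1/4$ give
\[
\Cov(n,m)=4\,\mathbb E_Y\!\left[\tfrac1{16}\,\mathbb P\big(X_{m-1}=\lfloor\theta-Y\rfloor\mid Y\big)^2\right]
=\tfrac14\,\mathbb E_Y\!\left[\mathbb P\big(X_{m-1}=\lfloor\theta-Y\rfloor\mid Y\big)^2\right].
\]
Here $Y\sim\mathrm{Bin}(n-2m,1/2)$ is independent of $X_{m-1}\sim\mathrm{Bin}(m-1,1/2)$. So the plan is simply to write this expectation as an explicit sum and reindex it.

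Since $Y$ is an integer, $\lfloor\theta-Y\rfloor=\lfloor\theta\rfloor-Y=\lfloor (n-m)/2\rfloor-Y$. Write $j:=\lfloor (n-m)/2\rfloor-Y$. Then
\[
\mathbb P(X_{m-1}=j\mid Y)=2^{-(m-1)}\binom{m-1}{j},
\]
with the convention that the binomial coefficient vanishes outside $0\le j\le m-1$. Squaring and averaging over $Y$ with weights $2^{-(n-2m)}\binom{n-2m}{y}$ gives
\[
\Cov(n,m)=\tfrac14\sum_{y}2^{-(n-2m)}\binom{n-2m}{y}\,2^{-2(m-1)}\binom{m-1}{\lfloor (n-m)/2\rfloor-y}^2 .
\]
Substituting $y=\lfloor (n-m)/2\rfloor-j$ and restricting to the $j\in\{0,\dots,m-1\}$ where the coefficient is nonzero gives
\[
\sum_{j=0}^{m-1}\binom{m-1}{j}^2\binom{n-2m}{\lfloor (n-m)/2\rfloor-j}.
\]
The prefactor is $\tfrac14\cdot 2^{-(n-2m)}\cdot 2^{-2m+2}=2^{-n}$, which matches the claim.

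The only delicate point is bookkeeping. First, we must check that the floor commutes with the integer shift by $Y$, which it does. Second, we must check that boundary terms with $\lfloor (n-m)/2\rfloor-j$ outside $[0,n-2m]$ vanish automatically under the standard convention, so the range $j=0,\dots,m-1$ is the full support. The assumption $m\le n/2$ ensures $n-2m\ge 0$, so $Y$ is well defined.
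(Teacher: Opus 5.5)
Your proof is correct and follows the paper's route: you combine Corollary~\ref{cor:majFactor} with the pmf lemma to get $\Cov(n,m)=\tfrac14\,\mathbb E_Y\bigl[\mathbb P(X_{m-1}=\lfloor\theta-Y\rfloor\mid Y)^2\bigr]$, then expand that expectation as an explicit binomial sum with prefactor $2^{-n}$. The only cosmetic difference is that the paper writes the squared pmf as a joint probability of two conditionally independent copies $X_1,X_2$ before summing over $j$, whereas you expand directly over $y$ and reindex; your version also states explicitly the floor-shift identity and the vanishing of out-of-range binomial terms (relevant when $m>n/3$), which the paper uses silently.
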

\begin{proof}
See Appendix~\ref{app:maj_exact-combinatorial}.
\end{proof}
While Theorem~\ref{thm:exactComb} provides an exact description of the covariance, extracting explicit scaling laws requires a delicate non-asymptotic analysis.
Indeed, up to normalization, the summand in Theorem~\ref{thm:exactComb} is a product of binomial probability masses, so the asymptotics depend on pointwise approximations to these masses across the relevant range of \(j\).
Consequently, our analysis relies fundamentally on a local limit theorem (LLT) to replace discrete binomial coefficients with Gaussian density functions, managing the approximation error in two distinct regimes.

The proof strategy bifurcates based on the relative size of the fold $m$:
\begin{enumerate}
    \item \textbf{The Sublinear Regime ($m \ll n$):} In this setting, the summation range over the validation fold statistics is narrow relative to the scale of the training set fluctuations. Due to this scale separation, the probability mass function governing the training decision boundary is locally flat over the effective support of the validation statistic. This allows us to approximate the training term by a single \textit{central coefficient}, decoupling the interaction.
    \item \textbf{The Large $m$ Regime:} When $m$ is comparable to $n$, the scale separation vanishes and we cannot fix the training term. Instead, we approximate the entire summation as a \textit{triple convolution} of Gaussian probability densities. Since the support is broad, the discrete sum converges rapidly to a Gaussian integral. The analysis here focuses on controlling the accumulated error from the Gaussian approximation across a large number of terms.
\end{enumerate}

The above approach establishes the following result, a more explicit version of which (including precise constants) is provided in Appendix~\ref{App:mainPrecise}.

\begin{theorem}[Majority Asymptotics]\label{thm:majority}
	Throughout, let $n\ge 2$ and $m|n$. For Majority in the uniformly random-label classification model under 0--1 loss, the following statements hold:
	
	\medskip
	\noindent\textbf{(A)} Uniformly over all \(m=m_n\) with \(m=\omega_n(1)\), and \(m\le n/2\),
	\[
	\Cov(n,m)
	= M_{n,m}
		 +o_n(M_{n,m})
    =\Theta\left(\frac{\sqrt{k}}{n}\right) \quad\text{where}\quad M_{n,m}=\frac{1}{2\pi\sqrt{(m-1)(2n-3m)}}.
	\]
    
    	\noindent{\bf (B) Monotonicity and minimizer of the Covariance.}

Let
\(
    \mathcal M_n:=\{m\in\mathbb N:m\mid n,\ 1\le m\le n/2\}.
\)
For all sufficiently large \(n\), if \(m_1,m_2\in\mathcal M_n\) satisfy
\(m_1<m_2\le n/3\), then
\[
    \operatorname{Cov}(n,m_1)>\operatorname{Cov}(n,m_2).
\]
Moreover, along subsequences with \(6\mid n\),
\[
    \operatorname{Cov}(n,n/3)<\operatorname{Cov}(n,n/2).
\]
Consequently, along subsequences with \(3\mid n\), the covariance-minimizing
choice is \(k=3\).

\medskip
\noindent{\bf (C) Minimizer of the MSE.}
Along subsequences of even \(n\), the MSE is minimized by \(k=2\) for all
sufficiently large \(n\). Moreover,
\[
    \min_{k\mid n}\operatorname{MSE}_{\mathrm{CV}}^{(k)}
    =
    \operatorname{MSE}_{\mathrm{CV}}^{(2)}
    =
    \frac{1}{4n}+\frac{1}{2\pi n}+o(n^{-1}).
\]
\end{theorem}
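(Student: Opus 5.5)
Everything runs through the exact formula of Theorem~\ref{thm:exactComb}, rewritten probabilistically. Write $\binom{m-1}{j}2^{-(m-1)}=P(X_{m-1}=j)$ and $\binom{n-2m}{i}2^{-(n-2m)}=P(Y=i)$; since $2(m-1)+(n-2m)=n-2$, we get
\[
\Cov(n,m)=\tfrac14\sum_j P(X_{m-1}=j)^2\,P(Y=\lfloor\theta\rfloor-j)=\tfrac14\,\mathbb P\big(X=X',\,X+Y=\lfloor\theta\rfloor\big),
\]
with $X,X'\sim\mathrm{Bin}(m-1,1/2)$ and $Y\sim\mathrm{Bin}(n-2m,1/2)$ independent. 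This is the point probability that the two-dimensional lattice vector $(X-X',\,X+Y)$ hits a specific point.

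\textbf{(A).} The vector has mean $(0,\ (n-m-1)/2)$, and $\lfloor\theta\rfloor$ lies within $O(1)$ of this mean. Its covariance matrix is $\mathrm{diag}\big((m-1)/2,\ (n-m-1)/4\big)$, because $\Cov(X-X',X+Y)=\mathrm{Var}(X)=(m-1)/4$ is cancelled by $-\Cov(X',X)=0$. So the bivariate Gaussian density at the centre is $1/\big(2\pi\sqrt{(m-1)(n-m-1)/8}\big)$.

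The formula in the statement instead uses the direct Gaussian form of the sum: $\sum_j \phi_{\sigma_1}(j-c)^2\phi_{\sigma_2}(\theta-j)$ with $\sigma_1^2=(m-1)/4$ and $\sigma_2^2=(n-2m)/4$. This gives
\[
\frac{1}{2\pi\sigma_1\sqrt{2}}\cdot\frac{1}{\sqrt{2\pi(\sigma_1^2/2+\sigma_2^2)}},
\]
which has the form $c/\sqrt{(m-1)(2n-3m)}$. I would carry out exactly this computation and check that the constant, including the lattice factor (the parity of $X+X'$ versus $X-X'$ makes the lattice of $(X-X',X+Y)$ have index~1, but the pairing constraint introduces a factor), comes out to $1/(2\pi)$ as claimed.

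Rigorously, I would use a uniform LLT with Berry--Esseen type error: $P(X_{m-1}=j)=\phi_{\sigma_1}(j-\mu_1)(1+O(\delta_j))$ for $|j-\mu_1|\le m^{1/2+\epsilon}$, with the tails bounded by Hoeffding. The same applies to $Y$; when $n-2m=O(1)$ (the case $m=n/2$) the $Y$ factor is a finite sum that is handled directly. The analysis then splits into the two regimes the paper describes.

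\begin{itemize}
\item For $m=o(n)$, the $Y$-mass is flat on the $\sqrt m$-window, so it can be replaced by its central value. This leaves $\sum_j P(X_{m-1}=j)^2\sim 1/\sqrt{\pi(m-1)}$.
\item For $m\asymp n$, I would replace the entire sum by the Gaussian integral (a Riemann sum) and control the relative errors uniformly.
\end{itemize}

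Since $2n-3m\in[n/2,2n]$, the result is $\Theta(1/\sqrt{mn})=\Theta(\sqrt k/n)$. The main obstacle is making the $o(M_{n,m})$ error uniform in $m$ across both regimes. I would overlap them, for instance handling $m\le n/\log n$ and $m\ge n^{1-\epsilon}$ separately.

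\textbf{(B).} For $m$ in the range $m\to\infty$, the main term $1/\big((m-1)(2n-3m)\big)^{1/2}$ is decreasing in $m$ exactly when $(m-1)(2n-3m)$ is increasing. That holds for $m$ up to about $n/3$, so the map is increasing on $[1,n/3]$. Distinct divisors $m_1<m_2\le n/3$ with $m_2\ge 2m_1$ or $m_1\le n/4$ give a multiplicative gap bounded away from $1$, which beats the $o(1)$ error. For bounded $m$ I would use the exact small-$m$ asymptotics from the same formula, which are of order $1/\sqrt{n}$ times a constant decreasing in $m$.

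The comparison at $m=n/3$ versus $m=n/2$ is delicate, since the products $(n/3)(n)$ and $(n/2)(n/2)$ differ by the factor $4/3$. The main terms therefore differ, and the sign follows from the main term plus the $o$ error. The hardest subcase of (B) is adjacent large divisors such as $n/4$ versus $n/3$; there the gap is still a constant ratio, so the same argument goes through.

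\textbf{(C).} By Proposition~\ref{prop:majMSE}, $\mathrm{MSE}=\frac1{4n}+\frac{k-1}{k}\Cov$. At $k=2$, part (A) gives $\Cov\approx 1/(2\pi\cdot n/2)=1/(\pi n)$, so $\tfrac12\Cov=1/(2\pi n)$, matching the claimed value. For $k\ge3$, the quantity $\frac{k-1}{k}\Cov(n,n/k)$ is asymptotically
\[
\frac{k-1}{k}\cdot\frac{k}{2\pi n\sqrt{2k-3}}=\frac{k-1}{2\pi n\sqrt{2k-3}}.
\]
This is at least $1/(2\pi n)$, with strict inequality for $k\ge3$: at $k=3$ it equals $2/\sqrt3>1$. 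The expression also grows with $k$. For growing $k$ (bounded $m$), the covariance is of order $n^{-1/2}$, which is much larger. Hence $k=2$ is the minimizer.
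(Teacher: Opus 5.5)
\textbf{The genuine gap is in (B).} You claim that divisors $m_1<m_2\le n/3$ with $m_2\ge 2m_1$ or $m_1\le n/4$ have main terms separated by a factor bounded away from $1$, so that an $o(M_{n,m})$ error suffices. This is false in general. Take $m_1=n/k$, $m_2=n/(k-1)$ with $k\to\infty$, or $m_1\approx n^{1/3}$ and $m_2=m_1+1$ with both dividing $n$. In these cases the main terms differ only by a relative amount of order $d/m_1$, where $d:=m_2-m_1$, and this tends to $0$. No unquantified $o(1)$ relative error can decide the sign there. When $d=1$, even the large-$m$ expansion of Theorem~\ref{thm:maj_large-m} is not enough: its absolute error $O(n^{-1/2}m^{-3/2})$ is a relative error $O(1/m)$, the same order as the gap $\approx 1/(2m_1)$. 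So the hardest case is not $n/4$ versus $n/3$, which is a constant ratio, but $m_2/m_1\to 1$ with $m_1\to\infty$.

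The paper handles this with two ingredients your plan lacks:
\begin{enumerate}
\item The finer expansion of Theorem~\ref{thm:maj_sublin}, $\operatorname{Cov}(n,m)=S_{m-1}\frac{1}{2\sqrt{\pi(2n-3m)}}+O(n^{-3/2}+m^{3/2}n^{-5/2})$, whose relative error is only $O(\sqrt m/n+m^2/n^2)$. Combined with the exact ratio $S_t/S_{t-1}=1-\frac{1}{2t}$, it shows the leading terms differ by a relative amount at least $c\min\{1,d/m_1\}$ when $m_2=o(n)$.
\item The arithmetic fact that $\mathrm{lcm}(m_1,m_2)\mid n$ and $\gcd(m_1,m_2)\mid d$. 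Together these force $d\ge m_1m_2/n$, hence $d/m_1\ge\max\{1/m_1,m_1/n\}\ge n^{-1/2}$, and this lower bound beats the error above.
\end{enumerate}
Some use of the divisor structure is unavoidable: for arbitrary neighbours $m,m+1$ of order $n$, the relative gap is only $O(1/n)$.

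\textbf{Your constant computation in (A) is wrong, not merely unfinished.} $\operatorname{Cov}(X-X',X+Y)=\mathrm{Var}(X)=(m-1)/4$ is not cancelled by anything. So $\Sigma$ has off-diagonal entry $(m-1)/4$ and $\det\Sigma=(m-1)(2n-3m-1)/16$. The steps $(1,1),(-1,0),(0,1)$ generate $\mathbb{Z}^2$, so there is no lattice or ``pairing'' correction. The point mass is therefore $\approx 1/(2\pi\sqrt{\det\Sigma})$, giving $\operatorname{Cov}(n,m)\approx\frac{1}{2\pi\sqrt{(m-1)(2n-3m-1)}}$. This is exactly $\frac14$ times the main term of Lemma~\ref{lem:triple-sum-theta}; for $\ell=(n-m)/2$, the offset $(0,\frac12)$ from the mean even reproduces its factor $e^{-1/(2n-3m-1)}$. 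Your diagonal $\Sigma$ produces $2n-2m$ in place of $2n-3m$. Your direct-Gaussian display is also too small by a factor $\sqrt{2\pi}$, because $\phi_\sigma^2=\phi_{\sigma/\sqrt2}/(2\sqrt\pi\,\sigma)$.

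Once this is fixed, your rigorous plan for (A) (one-dimensional LLTs, Hoeffding tails, sum-to-integral) is essentially the paper's $T_1$--$T_4$ argument with Poisson summation. Your derivation of (C) is correct: it uses the uniform statement in (A) together with the order-$n^{-1/2}$ covariance for bounded $m$, and does not need the fine monotonicity of (B).
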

\begin{proof}\emph{Proof of part (C).}
By Proposition~\ref{prop:majMSE},
\(
    \operatorname{MSE}_{\mathrm{CV}}^{(k)}
    =
    \frac{k-1}{k}\Cov(n,n/k)+\frac{1}{4n}.
\)
The second term is independent of \(k\). On the range \(m\le n/3\), both
\(\Cov(n,m)\) and the prefactor \((k-1)/k=1-m/n\) decrease with \(m\), so their product is asymptotically bounded below by its continuous minimum at \(m=n/3\). Hence, by part~(B), it remains only to compare \(m=n/3\) and
\(m=n/2\).
Using part~(A) and the endpoint asymptotic at \(m=n/2\),
\[
    \frac23\Cov(n,n/3)
    =
    \frac{1}{\sqrt 3\pi n}+o(n^{-1}),
    \qquad
    \frac12\Cov(n,n/2)
    =
    \frac{1}{2\pi n}+o(n^{-1}).
\]
Since \(1/(2\pi)<1/(\sqrt 3\pi)\), the full MSE is minimized by \(k=2\) for all sufficiently
large \(n\).
\end{proof}

We observe that the MSE scales as $\sqrt{k}/n$. In this setting, it is therefore advantageous to choose as few folds as possible. Notably, \emph{hypothesis stability--based bounds} are not sufficiently fine-grained here: they incorrectly predict the MSE to increase when $k$ decreases, see Section~\ref{sec:lim}.

\subsection{A Minimax Lower Bound for Cross-Validation with ERM Algorithms}\label{sec:minimax_maj}

The answer to Question~\ref{q2} follows as a corollary of the preceding analysis of the
Majority algorithm. Let $\mathcal A$ be an ERM algorithm over a nontrivial hypothesis
class $\mathcal H$, meaning that there exist $h_0,h_1\in\mathcal H$ and
$x_0\in\mathcal X$ such that $h_0(x_0)\neq h_1(x_0)$. Consider the distribution
$\mathcal D$ supported on $x_0$, with the label drawn uniformly from
$\{h_0(x_0),h_1(x_0)\}$.
For this distribution and the 0--1 loss, empirical risk minimization reduces to choosing
a classifier whose prediction at $x_0$ agrees with the majority label in the sample.
Consequently, away from exact ties, the induced prediction at \(x_0\) follows the
binary Majority rule. Tie-breaking may depend on the ERM, but tie events affect
only lower-order terms.
Thus, the lower bound obtained above for Majority applies to any ERM over a nontrivial
hypothesis class.

\begin{corollary}\label{cor:minimax}
For any ERM algorithm $\mathcal{A}$ for classification with 0--1 loss such that $|\mathcal H| \ge 2$, it holds that 
\[
\mathfrak{R}_{\mathrm{CV}}(\mathcal{A}) = \Omega\left(\frac{\sqrt{k^*}}{n}\right),
\]

where $k^*$ is the number of folds that achieves the minimax optimum.
\end{corollary}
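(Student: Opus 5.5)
The plan is to show that for every admissible $k$ (with $k\mid n$) there is a distribution $\mathcal D_k$ with $\mathrm{MSE}^{(k)}_{\mathrm{CV}}(\mathcal A,\mathcal D_k)\ge c\sqrt{k}/n$ for a universal constant $c>0$. Specialising to the minimax-optimal $k^*$ then gives $\mathfrak R_{\mathrm{CV}}(\mathcal A)=\max_{\mathcal D}\mathrm{MSE}^{(k^*)}_{\mathrm{CV}}(\mathcal A,\mathcal D)\ge c\sqrt{k^*}/n$. The distribution will in fact be the same for every $k$: the one constructed just before the corollary. We pick $h_0,h_1\in\mathcal H$ and $x_0$ with $h_0(x_0)\neq h_1(x_0)$, put all mass on $x_0$, and draw the label uniformly from $\{h_0(x_0),h_1(x_0)\}$. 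After relabelling these two values as $0,1$, we are exactly in the random-label model of Section~\ref{sec:maj}. Every hypothesis has risk $1/2$ if it predicts one of the two labels at $x_0$ and risk $1$ otherwise. An ERM never chooses the latter, since either $h_0$ or $h_1$ has empirical error at most $1/2$. So $L(\mathcal A(S^n))=1/2$ almost surely, and the MSE equals $\mathrm{Var}(\widehat L_{\mathrm{CV}}^{(k)})$. Proposition~\ref{prop:majMSE} then applies verbatim, giving $\frac{k-1}{k}\Cov(\widehat L_1,\widehat L_2)+\frac{1}{4n}$.

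Next we relate the ERM fold losses to those of Majority. On training set $S_{-i}$, ERM predicts the strict majority label whenever $Y_{-i}\neq (n-m)/2$. On ties it predicts some (possibly randomised, sample-dependent) label. Write $\widehat L_i=\widehat L_i^{\mathrm{maj}}+\Delta_i$, where $\Delta_i$ is supported on the tie event $T_i=\{Y_{-i}=\theta\}$ and satisfies $|\Delta_i|\le 1$. Then
\[
\Cov(\widehat L_1,\widehat L_2)\ge \Cov^{\mathrm{maj}}(n,m)-2\,\mathbb E|\Delta_1|-\mathbb E|\Delta_1\Delta_2| .
\]
A direct bound $P(T_i)=O(1/\sqrt{n})$ is too crude, so the tie correction must be handled more carefully. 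On $T_i$, the term $\Delta_i$ is (up to sign) $(1-2X_i/m)\cdot(\text{tie-break indicator}-\mathbb 1\{\text{Majority's default}\})$. Conditionally on $S_{-i}$, the fold $S_i$ is independent and $\mathbb E[1-2X_i/m]=0$; for deterministic-given-$S_{-i}$ tie-breaking (or internal randomness independent of $S_i$), this gives $\mathbb E[\Delta_i]=0$. The cross term $\mathbb E[\Delta_1\widehat L_2^{\mathrm{maj}}]$ can then be bounded with the factorization trick of Lemma~\ref{lem:factor}. Condition on $S'$. The event $T_1$ depends on $S'\circ S_2$, and $\phi(S_1)$ is centred and independent of everything else. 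So the only surviving contribution comes from the dependence of $\widehat L_2$ on $S_1$, and it is of order $P(T_1)\cdot\mathbb E\bigl[|\phi(S_1)|\,\mathbb 1\{\text{$S_1$ flips fold 2}\}\bigr]$. This is $O(\frac{1}{\sqrt n}\cdot\frac{1}{\sqrt m}\cdot\sqrt{m/n})=O(1/n)$, and similarly $\mathbb E[\Delta_1\Delta_2]=O(1/n)$.

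Finally we combine this with Theorem~\ref{thm:majority}(A), which gives $\Cov^{\mathrm{maj}}(n,m)=\Theta(\sqrt{k}/n)$ for $m=\omega(1)$. The regime $m=O(1)$, i.e.\ $k=\Theta(n)$, is handled directly from Theorem~\ref{thm:exactComb}: the central binomial coefficient gives $\Cov(n,m)\gtrsim 1/\sqrt{mn}\asymp\sqrt{k}/n$ there too. The factor $(k-1)/k\ge 1/2$ then yields $\mathrm{MSE}\ge c\sqrt{k}/n - O(1/n)$. Whenever $\sqrt k$ exceeds a large constant, this gives the bound. For bounded $k$, the bound $\mathrm{MSE}\ge \frac{1}{4n}$ from Proposition~\ref{prop:majMSE} already gives $\Omega(\sqrt k/n)$, provided $\Cov\ge0$ up to lower order; that follows from the same estimate, since the Majority covariance is $\Theta(1/n)$ there.

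The main obstacle I expect is the tie analysis. Showing that arbitrary (possibly randomised, data-dependent) tie-breaking only perturbs the covariance by $O(1/n)$, and not by $O(1/\sqrt n)$, requires exploiting the centring of $\phi(S_i)$ rather than a union bound on tie events. If a fully general tie-breaker resists this, I would fall back to choosing $n-m$ odd-parity settings, i.e.\ restrict to admissible $(n,k)$ with $n-n/k$ odd so ties cannot occur. Alternatively, I would perturb the label distribution to $\mathrm{Ber}(1/2)$ with a tiny atom, which removes ties at negligible cost.
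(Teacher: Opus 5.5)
Your overall route is the paper's: the one-point distribution with uniform labels on $\{h_0(x_0),h_1(x_0)\}$, $L(\mathcal A(S^n))=1/2$, Proposition~\ref{prop:majMSE}, and the $\Theta(\sqrt k/n)$ fold covariance of Majority. The paper handles ties in a single sentence, so you are right to look for an argument. But your quantitative tie estimate is false, and the proof breaks exactly where the bound is strongest.

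\textbf{The faulty step.} You bound $\mathbb E[\Delta_1\widehat L_2^{\mathrm{maj}}]$ by multiplying $P(T_1)$ by the \emph{unconditional} probability $\sqrt{m/n}$ that $S_1$ pushes fold~2 across its threshold. Conditioning on $T_1=\{Y'+X_2=\theta\}$ (with $Y'$ the label count of $S'$) changes this. Fold~2's training count becomes $Y'+X_1=\theta+(X_1-X_2)$, which is within $O(\sqrt m)$ of the threshold, so the flip happens with constant probability. Doing the factorization exactly, with ties broken opposite to Majority, $x:=\theta-Y'$ and $p_r(t)=2^{-r}\binom rt$, gives
\[
\Cov\bigl(\Delta_1,\widehat L_2^{\mathrm{maj}}\bigr)=-\tfrac14\,\mathbb E\bigl[p_{m-1}(x)\bigl(p_{m-1}(x)-p_{m-1}(x-1)\bigr)\bigr].
\]
This is of order $\Cov(n,m)/m\asymp 1/(\sqrt n\,m^{3/2})$, so it is $O(1/n)$ only when $m\gtrsim n^{1/3}$.

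\textbf{A concrete case.} Take $m=1$, $n$ odd, $Y'\sim\mathrm{Bin}(n-2,\tfrac12)$ and $\theta=(n-1)/2$. Then $\Cov(\Delta_1,\widehat L_2^{\mathrm{maj}})=-\tfrac14P(Y'=\theta)=-\Cov(n,1)$ and $\Cov(\Delta_1,\Delta_2)=\tfrac12P(Y'=\theta)$. Both are $\Theta(1/\sqrt n)$, the same order as the main term. They happen to cancel exactly here, but your bound cannot detect that.

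\textbf{Consequence.} For $m\to\infty$ your decomposition still works once the error is corrected to $O(\Cov(n,m)/m)$. For bounded $m$, i.e.\ $k=\Theta(n)$ up to leave-one-out, the inequality $\MSE\ge c\sqrt k/n-O(1/n)$ is not established. Citing Theorem~\ref{thm:exactComb} there controls Majority, not your ERM. Your fallbacks do not help either. For odd $n$ every $m\mid n$ is odd, so $n-m$ is even and ties always occur; moreover $k^*$ is dictated by $\mathcal A$, not chosen by you. Perturbing the label law does not remove ties.

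\textbf{The missing observation.} No perturbation around Majority is needed. On a one-point distribution, permutation invariance makes $\mathcal A(S_{-i})$ depend only on $Y_{-i}$ and the internal randomness. So on a tie the training sample is fixed up to order, and there is no sample-dependent tie-breaking to worry about. A deterministic ERM therefore breaks ties toward a fixed label, and by the $0\leftrightarrow1$ label symmetry its CV estimator has exactly Majority's law.

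\textbf{Randomized tie-breaking.} Suppose ties are broken by a coin that picks label~$1$ with probability $q$. Apply Lemma~\ref{lem:factor} with $\psi$ averaged over the coin. Let $\psi_b$ be the decision function sending ties to label $b$, and set $c_b:=\Cov(\phi(S_1),\psi_b(S'\circ S_1)\mid S')$. Then $\Cov(\widehat L_1,\widehat L_2)$ equals $\mathbb E_{S'}[((1-q)c_0+qc_1)^2]$ for independent coins across folds, and $(1-q)\mathbb E[c_0^2]+q\,\mathbb E[c_1^2]$ for a shared coin. Each $c_b\le 0$, since it is the covariance of a decreasing and a nondecreasing function of $X_1$. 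Also $\mathbb E[c_0^2]=\mathbb E[c_1^2]=\Cov(n,m)$ by label symmetry. Hence in every case $\Cov(\widehat L_1,\widehat L_2)\ge\tfrac12\Cov(n,m)$, uniformly in $m$. The factor $\tfrac12$ is attained at $m=1$ with independent fair coins, so ties are genuinely not lower order there.

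\textbf{Finishing.} Theorem~\ref{thm:majority}(A) for $m=\omega(1)$ and Theorem~\ref{thm:maj_sublin} for bounded $m$ then give $\MSE^{(k)}=\Omega(\sqrt k/n)$ for every $k$, in particular for $k^*$.
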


This result shows that, in the distribution-free setting, no ERM algorithm with \(k^*(n)=\omega_n(1)\) can 
utilize all $n$ samples as efficiently as an independent validation set of the same size, whose mean-squared error decreases at the optimal rate of order $1/n$. Importantly, this result is agnostic to the specific value of \(k^*\). While for Majority under random labels the optimal number of folds is \(k=2\), for other algorithms the minimax optimum may be attained at \(k^*\) growing with \(n\), in which case the lower bound becomes even stronger.

Finally, as a straightforward corollary, our precise analysis of Majority allows us to make the gap between CV and using an independent hold-out set explicit.
\begin{corollary}[Constant-factor gap for ERM]\label{cor:constant-gap}
Assume \(n\) is even. For any ERM algorithm $\mathcal{A}$ for classification with 0--1 loss such that $|\mathcal H| \ge 2$, there exists a data distribution \(\D\) such that cross-validation incurs a multiplicative gap of
\[
    \frac{
    \min_{k\mid n}\MSE_{\mathrm{CV}}^{(k)}(\mathcal A,\mathcal D)
    }{
    \MSE_{\mathrm{val}}^{(n)}(\mathcal A,\mathcal D)
    }=1+\frac{2}{\pi}+O(n^{-1})
\]
where \(\MSE_{\mathrm{val}}^{(n)}\) denotes the MSE of an independent validation set of size \(n\), and numerically, \(1+\frac{2}{\pi}\approx 1.637\).
\end{corollary}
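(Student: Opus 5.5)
We reduce to the Majority analysis, using the same construction as for Corollary~\ref{cor:minimax}. Pick \(h_0,h_1\in\mathcal H\) and \(x_0\) with \(h_0(x_0)\neq h_1(x_0)\). Let \(\mathcal D\) be concentrated on \(x_0\), with label uniform on \(\{h_0(x_0),h_1(x_0)\}\); after relabeling, these values are \(0\) and \(1\).

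\textbf{Step 1: the validation baseline.} Under \(\mathcal D\), every hypothesis \(h\) predicts one of the two labels at \(x_0\) or some third value. In either case \(L(h)\in\{1/2,1\}\), and an ERM (for \(n\ge1\)) never outputs a hypothesis wrong on both labels, because \(h_0\) or \(h_1\) has empirical error at most \(1/2\). Hence \(L(\mathcal A(S^n))=1/2\) almost surely, whatever the tie-breaking. An independent validation set of size \(n\) gives an estimate distributed as \(\mathrm{Bin}(n,1/2)/n\), independent of the trained hypothesis. Therefore \(\MSE_{\mathrm{val}}^{(n)}=1/(4n)\) exactly.

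\textbf{Step 2: CV reduces to Majority.} The same reasoning applies to each \(\mathcal A(S_{-i})\). Its prediction at \(x_0\) is the majority label of \(S_{-i}\) whenever \(S_{-i}\) has no tie. Since \(L(\mathcal A(S^n))=1/2\) and \(\E[\widehat L_i]=1/2\) for any ERM (conditionally on \(S_{-i}\), the fold error has mean \(1/2\)), Proposition~\ref{prop:majMSE} carries over verbatim:
\[
\MSE_{\mathrm{CV}}^{(k)}(\mathcal A,\mathcal D)=\tfrac{k-1}{k}\Cov_{\mathcal A}(\widehat L_1,\widehat L_2)+\tfrac{1}{4n}.
\]
The factorization of Corollary~\ref{cor:majFactor} still holds, with \(\psi(S_{-i})\) replaced by \(\mathbb 1\{\mathcal A(S_{-i})(x_0)=1\}-1/2\), and this indicator can differ from the Majority one only on the tie event \(\{Y_{-i}=(n-m)/2\}\).

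\textbf{Step 3: bounding the tie contribution.} By Lemma~\ref{lem:factor}, the covariance equals \(\E_{S'}[(\E[\phi(S_1)\psi(S'\circ S_1)\mid S'])^2]\). Modifying \(\psi\) on the tie event changes the inner conditional expectation by at most \(\E[|\phi(S_1)|\,\mathbb 1_{\text{tie}}\mid S']\). This tie term is where the main care is needed. If the ERM breaks ties using extra randomness, it is enough to assume independent internal randomness per fold; Lemma~\ref{lem:factor} then applies after conditioning on that randomness as well.

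On a tie, \(\phi(S_1)=1-2X_1/m\) is pinned to the deviation of \(X_1\) from \(m/2\), which is of order \(1/\sqrt m\). The tie event itself has conditional probability \(O(1/\sqrt m)\) given \(S'\), by a binomial point-mass bound. Hence the inner expectation shifts by \(O(1/m)\). Combined with the Majority inner term, which is of order \(1/\sqrt m\) times a point mass, the change in the covariance is of lower order than \(\sqrt k/n\) when \(m\to\infty\). When \(k=2\), so \(m=n/2\), ties of \(S_{-i}\) have probability \(O(n^{-1/2})\) and perturb \(\Cov(n,n/2)\) by \(O(n^{-3/2})\), which is \(o(n^{-1})\).

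\textbf{Step 4: conclusion.} Step 3 lets us import Theorem~\ref{thm:majority}(C) up to \(o(1/n)\) errors. This gives \(\min_k \MSE_{\mathrm{CV}}^{(k)}=\tfrac1{4n}+\tfrac1{2\pi n}+o(n^{-1})\), attained at \(k=2\); small \(m\), where the tie argument is weaker, is handled by part (A), which already makes those \(k\) far worse. Dividing by \(1/(4n)\) yields \(1+2/\pi\) with error \(o(1)\).

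The sharper \(O(n^{-1})\) remainder in the ratio requires an explicit second-order term for \(\Cov(n,n/2)\). At \(k=2\) it can be computed exactly from Theorem~\ref{thm:exactComb}, since there \(n-2m=0\) and
\[
\Cov(n,n/2)=2^{-n}\binom{m-1}{\lfloor m/2\rfloor}^2 .
\]
Stirling's formula then gives \(\tfrac{1}{\pi n}(1+O(1/n))\). This explicit computation, together with controlling ties at the same precision, is the main obstacle; for most ERMs, ties among \(S_{-i}\) do not occur for odd \(n-m\), and otherwise a tie correction of exact order \(n^{-2}\) must be verified.
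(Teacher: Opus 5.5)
Your route is the paper's: the one-point random-label distribution, $L(\mathcal A(S^n))=1/2$ almost surely, Proposition~\ref{prop:majMSE}, Theorem~\ref{thm:majority}(C), and the baseline $1/(4n)$. You treat ties more carefully than the paper, which simply asserts that every nontrivial ERM reduces to Majority.

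\textbf{Main gap: the $O(n^{-1})$ remainder.} As written, the proposal only proves the ratio $1+2/\pi+o(1)$. At $k=2$ you bound the tie perturbation of the covariance by $O(n^{-3/2})$, which costs $O(n^{-1/2})$ in the ratio. You then leave an order-$n^{-2}$ tie correction as an unverified obstacle. The missing observation is that at $k=2$ there is no tie correction at all.

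\begin{itemize}
\item There the shared set $S'$ is empty. So Lemma~\ref{lem:factor} gives $\Cov(\widehat L_1,\widehat L_2)=\bigl(\E[\phi(S_1)\psi(S_1)]\bigr)^2$.
\item The ERM's $\psi$ can differ from Majority's only on the tie event $\{X_1=m/2\}$. On that event $\phi(S_1)=1-2X_1/m=0$.
\item Hence, for every tie-breaking rule, randomized or not, the ERM's fold covariance is exactly $2^{-n}\binom{m-1}{\lfloor m/2\rfloor}^2$.
\item Theorem~\ref{thm:maj_small-k-mono}(B) already expands this as $\frac{1}{\pi n}+O(n^{-2})$, so no new Stirling computation is needed.
\end{itemize}

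This gives $\MSE_{\mathrm{CV}}^{(2)}=\frac{1}{4n}+\frac{1}{2\pi n}+O(n^{-2})$. For $k\ge 3$, the $1/n$ coefficient of the covariance term is at least $\frac{1}{\sqrt3\pi}$ instead of $\frac{1}{2\pi}$, so every such $k$ loses by a constant factor. Your Step~3 control, with relative error $O(m^{-1/2})$ when $m\to\infty$, therefore suffices to show that $\min_k \MSE_{\mathrm{CV}}^{(k)}=\MSE_{\mathrm{CV}}^{(2)}$ for large $n$. The stated $O(n^{-1})$ ratio then follows.

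\textbf{Smaller gap: bounded $m$.} Here your Step~3 perturbation $O(1/m)$ is of the same order as the Majority inner term. Appealing to part~(A), which is a statement about Majority, does not show that the given ERM is worse than $k=2$ at these fold counts. The repair is short. Write $Y'$ for the label count of $S'$ and $N=n-2m$.

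\begin{itemize}
\item If $m$ is odd, then $n-m$ is odd, so no ties occur and the ERM coincides with Majority.
\item If $m$ is even, then on the event $\{Y'=N/2\}$ a tie in $S'\circ S_1$ forces $X_1=m/2$, where $\phi=0$. So the inner term equals Majority's value, $-\tfrac12\mathbb P(X_{m-1}=m/2)$.
\item Consequently $\Cov\ge\tfrac14\,\mathbb P(Y'=N/2)\,\mathbb P(X_{m-1}=m/2)^2\gtrsim 1/(m\sqrt n)$. This is much larger than $1/n$ for bounded $m$.
\end{itemize}
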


\begin{proof}
It suffices to consider the one-point random-label distribution used in the proof of
Corollary~\ref{cor:minimax}. Under this distribution, any nontrivial ERM reduces to
Majority, for which Theorem~\ref{thm:majority}(C) shows that the MSE is minimized at \(k=2\), and
\(
    \operatorname{MSE}_{\mathrm{CV}}^{(2)}
    =
    \frac{1}{4n}+\frac{1}{2\pi n}+O(n^{-2}).
\)
An independent validation set of size \(n\) has MSE \(\mathbb{E}_{Y \sim \mathrm{Bin}(n, 1/2)}[(1/2 - Y/n)^2]=1/(4n)\). Taking the ratio yields the result.
\end{proof}

\subsection{Limitations of Existing Cross-Validation Theory: the Majority Benchmark}\label{sec:lim}

\paragraph{Arbitrarily loose sufficient conditions.}

We begin by examining the notion of hypothesis stability (Definition~\ref{def:hyp_stab}). Observe that the majority algorithm changes its output only when the sample is close to a tie. In the extreme case $k=n$ (leave-one-out CV), Majority flips its prediction only when the full sample has a tie or a one-vote margin, depending on
parity and on the removed label. Since the probability of a tie is $\Theta(1/\sqrt{n})$, Majority is $(\Theta(1/\sqrt{n}),1)$-hypothesis-stable.

For a general number of folds $k$, each fold has size $m=n/k$. By concentration of measure, the number of ones in a typical fold is $m/2 \pm O(\sqrt{m})$. For the output of Majority to change when a typical fold is removed, the full sample must therefore have a label count within $O(\sqrt{m})$ of $n/2$. The probability of this event is $\Theta(\sqrt{m}/\sqrt{n})=\Theta(1/\sqrt{k})$. Consequently, Majority is $(\Theta(1/\sqrt{k}),m)$-hypothesis-stable. In particular, the stability of Majority deteriorates as the number of folds decreases.

If one extends the stability-based bounds of \cite{rogers,devroye,kearns,bousquet} from leave-one-out to general $k$-fold cross-validation, the resulting guarantees retain the same qualitative form,
\[
L \le \widehat L_{\mathrm{CV}}^{(k)} + O(\sqrt{\beta}),
\]
where $\beta$ denotes the hypothesis stability parameter. However, our analysis shows that CV provides more accurate risk estimates for Majority precisely when the algorithm is \emph{less} stable. This mismatch is most pronounced at $k=2$, where the stability-based bound differs from the true mean-squared error by a factor of $\Theta(n)$.

Similarly, \citet{kumar2013near} introduce a refined notion of stability, termed \emph{loss stability} (Definition~2 in their paper, distinct from Definition~\ref{def:lossstab} here), parameterized by $\gamma$. For the Majority algorithm, this notion effectively reduces to hypothesis stability with $m=1$, except that the deviation inside the expectation is squared rather than taken in absolute value. By the discussion above, this yields $\gamma=\Theta(1/\sqrt{n})$ for Majority.

Instantiating Theorem~1 of \citet{kumar2013near} with the Majority algorithm, we find that the resulting upper bound on the MSE of the $k$-fold CV estimator is dominated by the term $(1-\tfrac{1}{k})\gamma$. Consequently, the bound becomes looser as $k$ decreases. In the extreme case $k=2$, the predicted upper bound exceeds the true MSE of CV for Majority by a factor of $\Theta(\sqrt{n})$ ($\mathrm{MSE}_{\mathrm{CV}}^{(2)}=\Theta(1/n)$ by Theorem~\ref{thm:majority}, and the bound predicts $\Theta(1/\sqrt{n})$).

\paragraph{Relative rather than absolute guarantees.}

\cite{kearns} compare leave-one-out cross-validation to empirical error and show that, for stable algorithms, the CV estimate is never much worse than the empirical error. However, this type of relative guarantee can be misleading when the quantity of interest is the MSE of risk estimation.

In particular, for Majority under leave-one-out CV we showed that $\mathrm{MSE}_{\mathrm{CV}}^{(n)}=\Theta(1/\sqrt{n})$. 
On the other hand, the empirical error is
\(
    \widehat L_{\rm emp}
    =
    \min(Y/n,1-Y/n),
\)
and therefore
\(
    \left(\widehat L_{\rm emp}-\frac12\right)^2
    =
    \left(Y/n-\frac12\right)^2
\)
which leads to an MSE of
\(
    \frac{1}{4n}.
\)

Thus, even though leave-one-out CV is guaranteed by \citet{kearns} to perform comparably to empirical error in a relative sense, its absolute performance in terms of MSE is worse by a factor of $\Theta(\sqrt{n})$. This gap is substantial, and it arises for an algorithm that is arguably among the most stable nontrivial learning rules.


\subsection{Mathematical Inaccuracies in Prior Analyses}\label{sec:errors}
We also identify two concrete technical issues in prior analyses of cross-validation. These issues are qualitative rather than quantitative: the affected results make claims about fundamental properties of CV estimation that fail as stated.

First, Theorem 5.3 of \cite{kearns} claims a necessity of loss stability for accurate leave-one-out estimation. The proof overlooks possible cancellation between positive and negative deviations in the CV error. Lemma~\ref{lem:anticorr} gives a counterexample; see Appendix~\ref{app:err}.

Second, the main theorem of \cite{kale2011cross} uses a conditional-expectation identity that is false in general: the empirical term uses a fixed validation fold, while the population term averages over an independent test point. Consequently, the advertised MSE bound does not follow from the proof; see Appendix~\ref{app:errKale}.

\section{Conclusion}

Our results show that the dependence induced by data reuse in cross-validation can impose
a genuine statistical cost, even for very simple ERM procedures. As a natural extension of Corollary~\ref{cor:minimax}, it would be compelling to investigate which combined properties of algorithms and data distributions can yield improved minimax rates (or even attain the optimal $1/n$ rate) in settings beyond the distribution-free case.


\newpage
\bibliography{colt2026final}

@article{specific2,
  title={Choice of V for V-fold cross-validation in least-squares density estimation},
  author={Arlot, Sylvain and Lerasle, Matthieu},
  journal={Journal of Machine Learning Research},
  volume={17},
  number={208},
  pages={1--50},
  year={2016}
}

@article{specific1,
  author = {Alain Celisse},
 journal = {The Annals of Statistics},
 number = {5},
 pages = {1879--1910},
 publisher = {Institute of Mathematical Statistics},
 title = {OPTIMAL CROSS-VALIDATION IN DENSITY ESTIMATION WITH THE L²-LOSS},
 urldate = {2026-02-05},
 volume = {42},
 year = {2014}
}

@inproceedings{kearns,
  title={Algorithmic stability and sanity-check bounds for leave-one-out cross-validation},
  author={Kearns, Michael and Ron, Dana},
  booktitle={Proceedings of the tenth annual conference on Computational learning theory},
  pages={152--162},
  year={1997}
}

@article{rogers,
  title={A finite sample distribution-free performance bound for local discrimination rules},
  author={Rogers, William H and Wagner, Terry J},
  journal={The Annals of Statistics},
  pages={506--514},
  year={1978},
  publisher={JSTOR}
}

@article{devroye,
  title={Distribution-free performance bounds for potential function rules},
  author={Devroye, Luc and Wagner, Terry},
  journal={IEEE Transactions on Information Theory},
  volume={25},
  number={5},
  pages={601--604},
  year={1979},
  publisher={IEEE}
}

@article{devroye2,
  title={Distribution-free inequalities for the deleted and holdout error estimates},
  author={Devroye, Luc and Wagner, Terry},
  journal={IEEE Transactions on Information Theory},
  volume={25},
  number={2},
  pages={202--207},
  year={1979},
  publisher={IEEE}
}

@article{bousquet,
  title={Stability and generalization},
  author={Bousquet, Olivier and Elisseeff, Andr{\'e}},
  journal={Journal of machine learning research},
  volume={2},
  number={Mar},
  pages={499--526},
  year={2002}
}

@article{bengioUnbiased,
  title={No unbiased estimator of the variance of k-fold cross-validation},
  author={Bengio, Yoshua and Grandvalet, Yves},
  journal={Journal of Machine Learning Research},
  volume={5},
  number={Sep},
  pages={1089--1105},
  year={2004}
}

@inproceedings{blum,
  title={Beating the hold-out: Bounds for k-fold and progressive cross-validation},
  author={Blum, Avrim and Kalai, Adam and Langford, John},
  booktitle={Proceedings of the Twelfth Annual Conference on Computational Learning Theory},
  pages={203--208},
  year={1999}
}

@article{blumer,
  title={Learnability and the Vapnik-Chervonenkis dimension},
  author={Blumer, Anselm and Ehrenfeucht, Andrzej and Haussler, David and Warmuth, Manfred K},
  journal={Journal of the ACM (JACM)},
  volume={36},
  number={4},
  pages={929--965},
  year={1989},
  publisher={ACM New York, NY, USA}
}

@article{vapnik,
  title={On the Uniform Convergence of Relative Frequencies of Events to Their Probabilities},
  author={Vapnik, Vladimir N and Chervonenkis, Alexey Y},
  journal={Theory of Probability \& Its Applications},
  volume={16},
  number={2},
  pages={264--280},
  year={1971},
  publisher={SIAM}
}

@inproceedings{nachum2024fantastic,
  title={Fantastic generalization measures are nowhere to be found},
  author={Gastpar, Michael and Nachum, Ido and Shafer, Jonathan and Weinberger, Thomas},
  booktitle={The Twelfth International Conference on Learning Representations},
  year={2024}
}

@inproceedings{jiangfantastic,
  title={Fantastic Generalization Measures and Where to Find Them},
  author={Jiang, Yiding and Neyshabur, Behnam and Mobahi, Hossein and Krishnan, Dilip and Bengio, Samy},
  booktitle={International Conference on Learning Representations},
  year={2019}
}

@article{gastpar2024algorithms,
  title={Which Algorithms Have Tight Generalization Bounds?},
  author={Gastpar, Michael and Nachum, Ido and Shafer, Jonathan and Weinberger, Thomas},
  journal={Advances in Neural Information Processing Systems},
  volume={38},
  pages={36554--36589},
  year={2026}
}

@inproceedings{anthony,
  title={Cross-validation for binary classification by real-valued functions: theoretical analysis},
  author={Anthony, Martin and Holden, Sean B},
  booktitle={Proceedings of the eleventh annual conference on Computational learning theory},
  pages={218--229},
  year={1998}
}

@article{arlot2010survey,
  title={A survey of cross-validation procedures for model selection},
  author={Arlot, Sylvain and Celisse, Alain},
    journal={Statistics Surveys},
   volume={4},
  year={2010}
}

@article{montecarlo,
  title={Cross-validation of regression models},
  author={Picard, Richard R and Cook, R Dennis},
  journal={Journal of the American Statistical Association},
  volume={79},
  number={387},
  pages={575--583},
  year={1984},
  publisher={Taylor \& Francis}
}

@article{combinatCV,
  title={Linear model selection by cross-validation},
  author={Shao, Jun},
  journal={Journal of the American statistical Association},
  volume={88},
  number={422},
  pages={486--494},
  year={1993},
  publisher={Taylor \& Francis}
}

@article{bengio2,
  title={Inference for the generalization error},
  author={Nadeau, Claude and Bengio, Yoshua},
  journal={Advances in neural information processing systems},
  volume={12},
  year={1999}
}

@inproceedings{kumar2013near,
  title={Near-optimal bounds for cross-validation via loss stability},
  author={Kumar, Ravi and Lokshtanov, Daniel and Vassilvitskii, Sergei and Vattani, Andrea},
  booktitle={International Conference on Machine Learning},
  pages={27--35},
  year={2013},
  organization={PMLR}
}

@inproceedings{kale2011cross,
  title={Cross-validation and mean-square stability},
  author={Kale, Satyen and Kumar, Ravi and Vassilvitskii, Sergei},
  booktitle={Innovations in Computer Science (ICS)},
  pages={487--495},
  year={2011}
}

@article{dziugaite2020search,
  title={In search of robust measures of generalization},
  author={Dziugaite, Gintare Karolina and Drouin, Alexandre and Neal, Brady and Rajkumar, Nitarshan and Caballero, Ethan and Wang, Linbo and Mitliagkas, Ioannis and Roy, Daniel M},
  journal={Advances in Neural Information Processing Systems},
  volume={33},
  pages={11723--11733},
  year={2020}
}

@book{petrov2012sums,
  title={Sums of independent random variables},
  author={Petrov, Valentin V},
  volume={82},
  year={2012},
  publisher={Springer Science \& Business Media}
}

@book{UML,
author = {Shalev-Shwartz, Shai and Ben-David, Shai},
title = {Understanding Machine Learning: From Theory to Algorithms},
year = {2014},
isbn = {1107057132},
publisher = {Cambridge University Press},
address = {USA}
}

@article{bayle,
  title={Cross-validation confidence intervals for test error},
  author={Bayle, Pierre and Bayle, Alexandre and Janson, Lucas and Mackey, Lester},
  journal={Advances in Neural Information Processing Systems},
  volume={33},
  pages={16339--16350},
  year={2020}
}

\newpage

\appendix

\section{Majority Algorithm}\label{app:majority}

Throughout this section, we consider the following setup.
\subsection{Setup and Notation}
	For $1\le m\le n/2$, $m|n$, let $N:=n-2m$ and define
	\[
	E(n, m):=
	2^{-(n-2)}\sum_{j=0}^{m-1}\binom{m-1}{j}^2
	\binom{n-2m}{\lfloor (n-m)/2\rfloor-j}
	\]
    such that $\Cov(\widehat L_1, \widehat L_2)=E(n,m)/4$ (see Thm.~\ref{thm:maj_exact-combinatorial} for details).
    We also denote $\Cov(n,m) \equiv\Cov(\widehat L_1, \widehat L_2)$ to highlight the roles of $n,m$.
    
	Let $\mathsf B_r\sim\mathrm{Bin}(r,\tfrac12)$ with pmf
	$p_r(t)=2^{-r}\binom{r}{t}$, and denote the Gaussian proxy
	\[
	g_r(t):=\sqrt{\frac{2}{\pi r}}\,
	\exp\!\Big(-\frac{(2t-r)^2}{2r}\Big)
	\]
	and central binomial mass
	\[
	S_{r}:=2^{-2r}\binom{2r}{r}
		\]

\subsection{Main Theorem}\label{App:mainPrecise}
\begin{theorem}[Fold-Covariance of the Majority Algorithm]
	Throughout, let $n\ge 2$ and $m|n$.
	
	\medskip

	\noindent\textbf{(A) Binomial form.} 
    One has
\[
\Cov(n,m)
=
S_{m-1}\frac{1}{2\sqrt{\pi(2n-3m)}}
+
O\left(n^{-3/2}+\frac{m^{3/2}}{n^{5/2}}\right),
\]
uniformly for all \(1\le m\le n/3\), where
\(
    S_{m-1}:=2^{-(2m-2)}\binom{2m-2}{m-1}.
\)

	\medskip
    	
	\noindent{\bf (B) Exact expression for $m=1$.}
It holds that
	\[
	\Cov(n,1)=2^{-n}\binom{\,n-2\,}{\big\lfloor \frac{n-1}{2}\big\rfloor}=\sqrt{\frac{1}{8\pi(n-2)}}+O\Big(\frac{1}{n^{3/2}}\Big)
	=\sqrt{\frac{1}{8\pi n}}+O\Big(\frac{1}{n}\Big).
	\]
    \medskip
    
	\noindent{\bf (C) Endpoint asymptotic for $m=n/2$.}
	It holds that
		\[
	\Cov\Big(n,\tfrac{n}{2}\Big)=\frac{1}{\pi(n-2)}+O\Big(\frac{1}{n^{2}}\Big)
	=\frac{1}{\pi n}+O\Big(\frac{1}{n^{2}}\Big).
	\]
        \medskip
    
	\noindent\textbf{(D) Large $m$ regime.}
    Uniformly over all integer sequences \(m=m_n\) satisfying
\(m=\omega_n(1)\) and \(m\le n/3\),
		\[
		\Cov(n,m)=\frac{1}{2\pi\sqrt{(m-1)(2n-3m)}}+O\bigg(\frac{1}{\sqrt{n}m^{3/2}}\bigg).
		\]
        
        	\medskip

    	\noindent{\bf (E) Monotonicity and minimizer.}
	Let
\(
    \mathcal M_n:=\{m\in\mathbb N:m\mid n,\ 1\le m\le n/2\}.
\)
For all sufficiently large \(n\), if \(m_1,m_2\in\mathcal M_n\) satisfy
\(m_1<m_2\le n/3\), then
\[
    \operatorname{Cov}(n,m_1)>\operatorname{Cov}(n,m_2).
\]
Moreover, along subsequences with \(6\mid n\),
\[
    \operatorname{Cov}(n,n/3)<\operatorname{Cov}(n,n/2).
\]
Consequently, along subsequences with \(3\mid n\), the covariance-minimizing
choice is \(k=3\).
\end{theorem}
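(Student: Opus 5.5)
Throughout I start from the exact convolution of Theorem~\ref{thm:exactComb}, written as
\[
\Cov(n,m)=\tfrac14\sum_{j=0}^{m-1} p_{m-1}(j)^2\, p_N\big(\lfloor (n-m)/2\rfloor - j\big),\qquad N=n-2m,
\]
a product of two binomial masses of the fold and one mass of the shared set.

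\textbf{Parts (B) and (C).} These are direct evaluations. For $m=1$ only $j=0$ survives, giving $2^{-n}\binom{n-2}{\lfloor (n-1)/2\rfloor}$. Its asymptotic follows from the standard central binomial estimate $p_r(\lfloor r/2\rfloor)=\sqrt{2/(\pi r)}\,(1+O(1/r))$ with $r=n-2$.

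For $m=n/2$ we have $N=0$, so only $j=\lfloor n/4\rfloor$ survives. I will recheck this index against the floor convention: with $N=0$ the last coefficient forces $j=\lfloor (n-m)/2\rfloor$, which gives a single squared central mass $p_{m-1}(\cdot)^2\approx 2/(\pi (m-1))$. That yields $\tfrac14\cdot 4\cdot 2/(\pi(n-2))\approx$ the stated $1/(\pi(n-2))$ once the prefactor $2^{-n}$ versus $2^{-(2m-2)}$ is accounted for. I will carefully track the powers of $2$ here.

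\textbf{Part (A), sublinear regime.} Write $p_N(c-j)$ with $c=\lfloor(n-m)/2\rfloor$. The relevant $j$ concentrate within $O(\sqrt m)$ of $(m-1)/2$, where $c-j$ is within $O(\sqrt m)$ of $N/2+O(1)$. I Taylor-expand the LLT proxy $g_N$ around $N/2+(m+1)/2-\ldots$ and check that the argument offset equals $m/2$ up to $O(1)$. This gives $p_N(c-j)=g_N(c-j)+O(N^{-3/2})$ with $g_N$ varying by relative amount $O(m/N)$ over the window.

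The sum then factorizes into $\sum_j p_{m-1}(j)^2=S_{m-1}$ (Vandermonde) times a central value. I will double-check that the centre of $p_N$ sits at offset $\sim m/2$ from $N/2$. That offset produces the factor $\exp(-m^2/(2N))$-type corrections, so the correct "central coefficient" is really the Gaussian evaluated after averaging. This is where the $2n-3m$ appears: the convolution of $p_{m-1}^2$ (variance $(m-1)/8$ for the normalized square) with $p_N$ gives a Gaussian of variance $N/4+(m-1)/8\propto 2n-3m$.

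So rather than freezing the coefficient, I convolve Gaussians. The errors are a global LLT error $O(N^{-3/2})\cdot S_{m-1}\sum$ and a curvature error $O(m^{3/2}/n^{5/2})$ from the second-order Taylor term weighted by the $j$-variance. Together these give the stated error terms.

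\textbf{Part (D).} Apply the LLT also to $p_{m-1}(j)^2$, with error $O(m^{-3/2})$ relative per term. The triple convolution of Gaussians then equals the Gaussian with variance $(m-1)/8+N/4$ evaluated at the offset. Combined with $S_{m-1}\sim 1/\sqrt{\pi(m-1)}$ this gives $1/(2\pi\sqrt{(m-1)(2n-3m)})$, with relative error $O(1/m)$. That is exactly $O(n^{-1/2}m^{-3/2})$.

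\textbf{Part (E), the main obstacle.} For $m_1<m_2\le n/3$ both large, the main term is strictly decreasing in $m$, since $(m-1)(2n-3m)$ increases up to $m\approx n/3$. However, the relative gap between consecutive divisors can be as small as $O(1/n)$ relative near $n/3$, beyond the $O(1/m)$ error.

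I would therefore first handle small and moderate $m$ using the uniform expansion of (A), whose leading term $S_{m-1}/\sqrt{2n-3m}$ is strictly decreasing in $m$ since $S_r$ decreases. Divisors of $n$ that are $\le n/3$ are separated in a way that makes the leading-term ratio beat the error. Near the top, the divisors $\le n/3$ are $n/3,n/4,\dots$, so consecutive ratios are bounded away from $1$ and the $O(1/m)$ errors suffice.

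The comparison $\Cov(n,n/3)\approx 1/(\pi n\sqrt{\ldots})$ versus $1/(\pi n)$ follows from (D) at $m=n/3$, giving $\sqrt3/(2\pi n)<1/(\pi n)$. I expect making the small-$m$ versus $m=\omega(1)$ splice uniform to be the most delicate step.
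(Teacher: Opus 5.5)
Your treatment of (A)--(D) follows the paper's route: an LLT for $p_N$ plus second-moment matching against the hypergeometric weights $p_{m-1}(j)^2/S_{m-1}$ in (A), a full-lattice triple-Gaussian sum in (D), and direct evaluation in (B) and (C). There are two slips to fix.

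\emph{Part (C).} Your chain $\tfrac14\cdot 4\cdot 2/(\pi(n-2))$ equals $2/(\pi(n-2))$, which is off by a factor $2$. The correct count is $\Cov(n,n/2)=\tfrac14\,p_{m-1}(\lfloor n/4\rfloor)^2=\tfrac14\cdot\tfrac{2}{\pi(m-1)}\bigl(1+O(n^{-1})\bigr)=\tfrac{1}{\pi(n-2)}+O(n^{-2})$.

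\emph{Part (A).} There is no $\exp(-m^2/(2N))$-type effect. The quantity $\lfloor (n-m)/2\rfloor-j-N/2=m/2-j+O(1)$ is $O(\sqrt m)$ on the bulk of the weights, so the offset $m/2$ is cancelled. Also, the second-order Taylor term is not an error term. It is exactly the correction that turns $2n-4m$ into $2n-3m$. The $m^{3/2}n^{-5/2}$ error comes from the fourth moment: $\mathbb E_w[\Delta_J^4]/N^2=O(m^2/n^2)$, multiplied by $S_{m-1}N^{-1/2}$.

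\textbf{The genuine gap is in (E).} It sits precisely at your sentence ``divisors of $n$ that are $\le n/3$ are separated in a way that makes the leading-term ratio beat the error.'' You give no argument for it, and no argument that only uses $d\ge 1$ can work. Set $\Phi(n,m):=S_{m-1}/(2\sqrt{\pi(2n-3m)})\asymp (mn)^{-1/2}$ and $m_2=m_1+d$ with $m_2=o(n)$. Then the leading-term gap is only of order $\Phi(n,m_1)\min\{1,d/m_1\}$. Even this needs the log-decrease $\approx d/(2m_1)$ of $S_{m-1}$ to beat the log-increase $\approx 3d/(2(2n-3m))$ of $(2n-3m)^{-1/2}$; ``since $S_r$ decreases'' is not enough.

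For $d=1$ and $m_1\gtrsim n^{2/3}$, this gap is $\lesssim n^{-3/2}$, which is no larger than the error term of (A). Part (D) does not rescue it either: its relative error $O(1/m)$ is of the same order as the relative gap $\approx d/(2m)$ when $d=O(1)$.

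The missing idea is arithmetic. Since $\gcd(m_1,m_2)\mid d$ and $m_1m_2/\gcd(m_1,m_2)=\mathrm{lcm}(m_1,m_2)\le n$, you get $d\ge m_1m_2/n\ge m_1^2/n$. Hence $d/m_1\ge\max\{1/m_1,m_1/n\}$, and this is what makes the gap dominate both $n^{-3/2}$ and $m_2^{3/2}n^{-5/2}$ throughout $m_2=o(n)$. You then still need two further cases:
\begin{itemize}
\item the mixed case $m_1=o(n)$, $m_2\asymp n$, which follows from $(m_1n)^{-1/2}\gg n^{-1}$;
\item the linear case with fixed $k_1>k_2\ge 3$, which follows from (D) and the monotonicity of $k/\sqrt{2k-3}$.
\end{itemize}

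Your diagnosis of where the difficulty lies is also off. Part (A) is already uniform on $1\le m\le n/3$, so no small-$m$/$\omega(1)$ splice is needed. Near $n/3$, the admissible sizes $n/3,n/4,\dots$ have relative gaps bounded below. The hard range is the intermediate one, $n^{2/3}\lesssim m=o(n)$.
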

\begin{proof}
This is a consequence of collecting the results of \cref{thm:maj_small-k-mono,thm:maj_sublin,thm:maj_large-m,thm:maj_mono-minimizer}.
\end{proof}

\bigskip\bigskip

\subsection{Technical Lemmas}
Let us first state a few technical results.

\begin{lemma}[Triple Gaussian Product]\label{lem:twoG}
	Let $P(j) := g_{m-1}(j)^2\,g_{N}(\ell-j)$. With the parameters
	\[
	\alpha:=\frac{4}{m-1},\qquad
	\beta :=\frac{2}{N},\qquad
	\mu:=\frac{\alpha\cdot\frac{m-1}{2}+\beta\cdot\frac{m}{2}}{\alpha+\beta}
	=\frac{(m-1)(2N+m)}{2(2N+m-1)},
	\]
	the product $P(j)$ can be written as:
	\begin{align*}
		P(j)
		&= \left( \frac{2}{\pi(m-1)} \sqrt{\frac{2}{\pi N}} \right)
		\exp\!\left( - \frac{1}{2N+m-1} \right)
		\exp\!\Big(-(\alpha+\beta)(j-\mu)^2\Big).
	\end{align*}
	Furthermore, the sum of the rates is
	\[
	\alpha+\beta = \frac{2(2N+m-1)}{(m-1)N}.
	\]
\end{lemma}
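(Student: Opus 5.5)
The identity is a completing-the-square computation for a product of Gaussian kernels. I will assume, as in its use for Theorem~\ref{thm:exactComb}, that $\ell=(n-m)/2$ is the shift appearing there, with $n-m$ even (otherwise $\ell=\lfloor (n-m)/2\rfloor$ and the centre $b$ below moves by $1/2$). The plan has three steps.

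\textbf{Step 1: rewrite each factor.} From the definition, $g_r(t)=\sqrt{2/(\pi r)}\,\exp\!\big(-\tfrac{2}{r}(t-\tfrac r2)^2\big)$. Squaring the first factor gives
\[
g_{m-1}(j)^2=\frac{2}{\pi(m-1)}\exp\!\big(-\alpha (j-a)^2\big),\qquad \alpha=\frac{4}{m-1},\quad a=\frac{m-1}{2}.
\]
For the second factor, $\ell-j-\tfrac N2=\tfrac{n-m}{2}-\tfrac{n-2m}{2}-j=\tfrac m2-j$. Hence
\[
g_N(\ell-j)=\sqrt{\frac{2}{\pi N}}\exp\!\big(-\beta (j-b)^2\big),\qquad \beta=\frac 2N,\quad b=\frac m2.
\]
The product of the prefactors is exactly the constant $\frac{2}{\pi(m-1)}\sqrt{\frac{2}{\pi N}}$ in the statement.

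\textbf{Step 2: complete the square.} Use the standard identity
\[
\alpha(j-a)^2+\beta(j-b)^2=(\alpha+\beta)(j-\mu)^2+\frac{\alpha\beta}{\alpha+\beta}(a-b)^2,\qquad \mu=\frac{\alpha a+\beta b}{\alpha+\beta}.
\]
This choice of $\mu$ is precisely the weighted mean in the statement. Next compute
\[
\alpha+\beta=\frac{4N+2(m-1)}{(m-1)N}=\frac{2(2N+m-1)}{(m-1)N},
\]
which is the claimed sum of rates. Substituting $\alpha,\beta,a,b$ into $\mu$ and clearing denominators gives $\mu=\frac{(m-1)(2N+m)}{2(2N+m-1)}$.

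\textbf{Step 3: the constant term.} Since $(a-b)^2=1/4$ and
\[
\frac{\alpha\beta}{\alpha+\beta}=\frac{8/((m-1)N)}{2(2N+m-1)/((m-1)N)}=\frac{4}{2N+m-1},
\]
the constant term in the exponent equals $1/(2N+m-1)$. This produces the factor $\exp(-1/(2N+m-1))$.

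None of these steps is a real obstacle. The only point needing care is the bookkeeping of the centre of $g_N(\ell-j)$, which depends on the parity convention for $\ell$. If $\ell$ is the floor and $n-m$ is odd, the same computation holds with $b=(m-1)/2$. In that case the constant term vanishes and the stated exponential factor is replaced by $1$, so I would state the lemma for the even case as written.
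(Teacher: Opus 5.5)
Your proof is correct and matches the paper's argument essentially line by line. Like the paper, you rewrite each factor as a Gaussian in $j$ with centres $a=(m-1)/2$ and $b=m/2$ and rates $\alpha,\beta$, complete the square, and evaluate $\frac{\alpha\beta}{\alpha+\beta}(a-b)^2=\frac{1}{2N+m-1}$. Your parity remark is also right and consistent with the paper's treatment of the case $\varepsilon=1/2$ in the proof of Theorem~\ref{thm:maj_large-m}: with $\ell=\lfloor (n-m)/2\rfloor$ and $n-m$ odd, one gets $b=a$, so the constant factor becomes $1$ and $\mu=(m-1)/2$.
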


\begin{proof}
	Recall
	\[
	g_r(t):=\sqrt{\frac{2}{\pi r}}\,
	\exp\!\Big(-\frac{(2t-r)^2}{2r}\Big).
	\]
	Let $N:=n-2m$ and $\ell=(n-m)/2$.
	
	We first write out the terms. Let $a:=(m-1)/2$ and $\alpha:=4/(m-1)$.
	\[
	g_{m-1}(j)^2 = \left(\sqrt{\frac{2}{\pi(m-1)}}\right)^2
	\exp\!\Big(-2 \cdot \frac{2}{m-1}(j-\tfrac{m-1}{2})^2\Big)
	= \frac{2}{\pi(m-1)}\,e^{-\alpha(j-a)^2}.
	\]
	For the second term, let $b:=m/2$ and $\beta:=2/N$. The exponent's center is
	$\ell-j - \frac{N}{2} = \frac{n-m}{2} - j - \frac{n-2m}{2} = \frac{m}{2} - j = -(j-b)$.
	Thus,
	\[
	g_N(\ell-j) = \sqrt{\frac{2}{\pi N}}\,
	\exp\!\Big(-\frac{2}{N}(\ell-j-\tfrac N2)^2\Big)
	= \sqrt{\frac{2}{\pi N}}\,e^{-\beta(j-b)^2}.
	\]
	The product is
	\[
	g_{m-1}(j)^2\,g_N(\ell-j) =
	\underbrace{\left( \frac{2}{\pi(m-1)} \sqrt{\frac{2}{\pi N}} \right)}_{:=C_{\text{prod}}}\,
	\exp\{-\alpha(j-a)^2-\beta(j-b)^2\}.
	\]
	We complete the square for the exponential terms:
	\[
	-\alpha(j-a)^2-\beta(j-b)^2
	=-(\alpha+\beta)(j-\mu)^2-\frac{\alpha\beta}{\alpha+\beta}(a-b)^2,
	\]
	where $\mu:=(\alpha a+\beta b)/(\alpha+\beta)$ is as stated in the lemma.
	The constant term in the exponent depends on $a-b = (m-1)/2 - m/2 = -1/2$.
	\[
	\frac{\alpha\beta}{\alpha+\beta}(a-b)^2
	= \frac{1}{4} \cdot \frac{\frac{4}{m-1} \cdot \frac{2}{N}}{\frac{4}{m-1} + \frac{2}{N}}
	= \frac{1}{4} \cdot \frac{8/((m-1)N)}{(4N+2m-2)/((m-1)N)}
	 = \frac{1}{2N+m-1}.
	\]
	We also compute
	\[
	\alpha+\beta = \frac{4}{m-1} + \frac{2}{N}  = \frac{2(2N+m-1)}{(m-1)N}.
	\]
	Combining these results yields the displayed formula.
\end{proof}

\begin{lemma}[Poisson summation for Gaussians]\label{lem:PSF}
	Let $\gamma>0$ and $\mu\in\mathbb R$. Define
	\[
	f_{\gamma,\mu}(x):=e^{-\gamma(x-\mu)^2}.
	\]
	Then
	\begin{equation}\label{eq:PSF}
		\sum_{j\in\mathbb Z} f_{\gamma,\mu}(j)
		=\sqrt{\frac{\pi}{\gamma}}\,
		\sum_{t\in\mathbb Z} e^{-\pi^2 t^2/\gamma}\,e^{-2\pi i t\mu}.
	\end{equation}
\end{lemma}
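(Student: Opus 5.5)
Fix $\gamma>0$ and $\mu\in\mathbb R$. I will apply the classical Poisson summation formula
\[
\sum_{j\in\mathbb Z} f(j)=\sum_{t\in\mathbb Z}\hat f(t),\qquad \hat f(\xi):=\int_{\mathbb R} f(x)e^{-2\pi i x\xi}\,dx,
\]
to $f=f_{\gamma,\mu}$. The argument has three steps: justify the formula for this $f$, compute the Fourier transform explicitly, and substitute.

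\emph{Step 1 (validity).} The function $f_{\gamma,\mu}$ is a Schwartz function, so Poisson summation holds pointwise and both series converge absolutely. To keep the argument self-contained, I would verify this directly. The periodization $F(x):=\sum_{j\in\mathbb Z}f(x+j)$ converges uniformly, together with all its derivatives, on compact sets, because $f$ and its derivatives decay like $e^{-\gamma x^2}$ times a polynomial. Hence $F$ is smooth and $1$-periodic, and its Fourier series converges to it everywhere. Its $t$-th Fourier coefficient is obtained by unfolding the sum over $j$ into an integral over $\mathbb R$:
\[
\int_0^1 F(x)e^{-2\pi i t x}\,dx=\hat f(t).
\]
Evaluating $F$ at $x=0$ then gives the formula.

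\emph{Step 2 (Fourier transform).} The substitution $x=y+\mu$ gives
\[
\hat f(\xi)=e^{-2\pi i\mu\xi}\int_{\mathbb R} e^{-\gamma y^2-2\pi i y\xi}\,dy.
\]
I complete the square:
\[
-\gamma y^2-2\pi i y\xi=-\gamma\Big(y+\tfrac{\pi i\xi}{\gamma}\Big)^2-\tfrac{\pi^2\xi^2}{\gamma}.
\]
The remaining integral is $\int_{\mathbb R} e^{-\gamma(y+ic)^2}\,dy$ with $c=\pi\xi/\gamma$ real. Shifting the contour back to the real axis is justified by Cauchy's theorem applied to a rectangle: the vertical edges contribute terms bounded by $|c|\,e^{-\gamma R^2+\gamma c^2}\to0$ as $R\to\infty$. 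Therefore the integral equals $\sqrt{\pi/\gamma}$, and
\[
\hat f(\xi)=\sqrt{\pi/\gamma}\,e^{-\pi^2\xi^2/\gamma}e^{-2\pi i\mu\xi}.
\]
As an alternative to the contour shift, one can show that both sides satisfy the same first-order ODE in $\xi$ and agree at $\xi=0$.

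\emph{Step 3 (substitution).} Setting $\xi=t\in\mathbb Z$ and summing over $t$ yields \eqref{eq:PSF}.

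There is no real obstacle in this argument. The only point requiring care is the analytic justification: the convergence of the periodization in Step 1, and the contour shift (or the ODE alternative) in Step 2. Both are standard for Gaussians.
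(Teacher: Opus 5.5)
Your proposal is correct and follows the same route as the paper: periodize $f_{\gamma,\mu}$, unfold the sum over $j$ to identify the Fourier coefficients with $\widehat f_{\gamma,\mu}(t)$, compute the Gaussian Fourier transform, and evaluate the resulting Fourier series at $x=0$. The only difference is that you justify the Gaussian transform explicitly by a contour shift (or the ODE argument), where the paper simply calls it standard.
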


\begin{proof}
	Let $\mathcal P_{\gamma,\mu}(x):=\sum_{j\in\mathbb Z} f_{\gamma,\mu}(x+j)$ be the periodization (absolutely and uniformly convergent on $\mathbb R$). Then $\mathcal P_{\gamma,\mu}$ is $1$–periodic and belongs to $C^\infty$. Its complex Fourier series is
	\[
	\mathcal P_{\gamma,\mu}(x)=\sum_{t\in\mathbb Z} c_t\,e^{2\pi i t x},
	\qquad
	c_t=\int_0^1\mathcal P_{\gamma,\mu}(x)\,e^{-2\pi i t x}\,dx.
	\]
	By absolute convergence we may integrate termwise:
	\[
	c_t=\sum_{j\in\mathbb Z}\int_0^1 e^{-\gamma(x+j-\mu)^2}\,e^{-2\pi i t x}\,dx
	=\int_{\mathbb R} e^{-\gamma(y-\mu)^2}\,e^{-2\pi i t y}\,dy
	=: \widehat f_{\gamma,\mu}(t),
	\]
	after the change of variables $y=x+j$. The Gaussian Fourier transform is standard:
	\[
	\widehat f_{\gamma,\mu}(t)
	=e^{-2\pi i t\mu}\int_{\mathbb R}e^{-\gamma z^2}e^{-2\pi i t z}\,dz
	=e^{-2\pi i t\mu}\sqrt{\frac{\pi}{\gamma}}\;e^{-\pi^2 t^2/\gamma}.
	\]
	Thus
	\[
	\mathcal P_{\gamma,\mu}(x)
	=\sqrt{\frac{\pi}{\gamma}}\sum_{t\in\mathbb Z}e^{-\pi^2 t^2/\gamma}\,e^{2\pi i t(x-\mu)}.
	\]
	Evaluating at $x=0$ gives
	\[
	\sum_{j\in\mathbb Z} f_{\gamma,\mu}(j)
	=\mathcal P_{\gamma,\mu}(0)
	=\sqrt{\frac{\pi}{\gamma}}\sum_{t\in\mathbb Z}e^{-\pi^2 t^2/\gamma}\,e^{-2\pi i t\mu},
	\]
	which is \eqref{eq:PSF}.
\end{proof}

\begin{lemma}[Lattice sum of the triple Gaussian]\label{lem:triple-sum-theta}
	With $N=n-2m$, $\ell=(n-m)/2$, and the parameters
	\[
	\alpha=\frac{4}{m-1},\quad \beta=\frac{2}{N},\quad
	\mu=\frac{(m-1)(2N+m)}{2(2N+m-1)},
	\]
	we have the exact identity
	\begin{align}
		\sum_{j\in\mathbb Z} g_{m-1}(j)^2\,g_N(\ell-j)
		&= \frac{2}{\pi}\cdot \frac{1}{\sqrt{(m-1)(2N+m-1)}}\,
		e^{-\frac{1}{\,2N+m-1\,}}\;
		\Theta_{n,m},\label{eq:triple-theta}\\
		\Theta_{n,m}
		&:= \sum_{t\in\mathbb Z}
		\exp\!\Big(-\pi^2 t^2/(\alpha+\beta)\Big)\,
		\exp\!\big(-2\pi i t \mu\big).\label{eq:theta-def}
	\end{align}
	Equivalently, using $\alpha+\beta=\dfrac{2(2N+m-1)}{(m-1)N}$,
	\begin{align}
		\sum_{j\in\mathbb Z} &g_{m-1}(j)^2\,g_N(\ell-j)\notag\\
		&= \frac{2}{\pi}\cdot \frac{1}{\sqrt{(m-1)(2n-3m-1)}}\,
		e^{-\frac{1}{\,2n-3m-1\,}}\;
		\sum_{t\in\mathbb Z}
		\exp\!\Big(-\frac{\pi^2 t^2 (m-1)N}{2(2n-3m-1)}\Big)\,
		e^{-2\pi i t \mu}\label{eq:triple-theta-simplified}.
	\end{align}
\end{lemma}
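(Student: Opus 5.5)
The identity follows by combining the two preceding lemmas. Lemma~\ref{lem:twoG} factors each summand, and Lemma~\ref{lem:PSF} evaluates the resulting Gaussian lattice sum.

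First, I apply Lemma~\ref{lem:twoG} termwise for every $j\in\mathbb Z$. The formula there is a pointwise algebraic identity in $j$, valid for all real $j$ because $g_r$ is defined on all of $\mathbb R$. It gives
\[
\sum_{j\in\mathbb Z} g_{m-1}(j)^2 g_N(\ell-j)=\frac{2}{\pi(m-1)}\sqrt{\frac{2}{\pi N}}\,e^{-\frac{1}{2N+m-1}}\sum_{j\in\mathbb Z}e^{-(\alpha+\beta)(j-\mu)^2}.
\]
I then invoke Lemma~\ref{lem:PSF} with $\gamma=\alpha+\beta>0$ and the same $\mu$. This turns the remaining sum into $\sqrt{\pi/(\alpha+\beta)}\,\Theta_{n,m}$, where $\Theta_{n,m}$ is exactly the theta series in \eqref{eq:theta-def}.

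It remains to simplify the prefactor using $\alpha+\beta=2(2N+m-1)/((m-1)N)$:
\[
\frac{2}{\pi(m-1)}\sqrt{\frac{2}{\pi N}}\sqrt{\frac{\pi (m-1)N}{2(2N+m-1)}}=\frac{2}{\pi}\,\frac{1}{\sqrt{(m-1)(2N+m-1)}}.
\]
This yields \eqref{eq:triple-theta}. For the equivalent form \eqref{eq:triple-theta-simplified}, I substitute $N=n-2m$, so that $2N+m-1=2n-3m-1$, and write $\pi^2t^2/(\alpha+\beta)=\pi^2t^2(m-1)N/(2(2n-3m-1))$.

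No step here is a real obstacle; the bookkeeping of constants is the only place an error could slip in. Two side conditions need to be recorded. First, $m\ge2$ and $N\ge1$ are needed so that $g_{m-1}$ and $g_N$ are defined and $\alpha+\beta>0$; this is implicit in the statement. Second, the sum must converge absolutely, which it does because it is a Gaussian sum.
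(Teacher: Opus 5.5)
Your proposal is correct and follows the paper's proof exactly: factor each summand via Lemma~\ref{lem:twoG}, evaluate the Gaussian lattice sum with Lemma~\ref{lem:PSF} at $\gamma=\alpha+\beta$, and simplify the prefactor $C_{\text{prod}}\sqrt{\pi/(\alpha+\beta)}$ to $\frac{2}{\pi}\bigl((m-1)(2N+m-1)\bigr)^{-1/2}$. Your side conditions $m\ge 2$ and $N\ge 1$ are also correct; the paper leaves them implicit.
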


\begin{proof}
	By Lemma~\ref{lem:twoG}, we have
	\[
	g_{m-1}(j)^2\,g_N(\ell-j)
	= C_{\text{prod}} \cdot
	e^{-\frac{1}{\,2N+m-1\,}}\;
	\exp\!\Big(-(\alpha+\beta)(j-\mu)^2\Big),
	\]
	where $C_{\text{prod}} = \frac{2}{\pi(m-1)} \sqrt{\frac{2}{\pi N}}$.
	Summing over $j\in\mathbb Z$ and applying Lemma~\ref{lem:PSF} with
	$\gamma:=\alpha+\beta$, we obtain
	\begin{align*}
		\sum_{j\in\mathbb Z} g_{m-1}(j)^2\,g_N(\ell-j)
		&= C_{\text{prod}} \cdot e^{-\frac{1}{\,2N+m-1\,}}
		\sum_{j\in\mathbb Z} e^{-(\alpha+\beta)(j-\mu)^2} \\
		&= C_{\text{prod}} \cdot e^{-\frac{1}{\,2N+m-1\,}}
		\cdot \sqrt{\frac{\pi}{\alpha+\beta}}\sum_{t\in\mathbb Z}
		e^{-\pi^2 t^2/(\alpha+\beta)}e^{-2\pi i t\mu}.
	\end{align*}
	We now compute the combined prefactor. Using $\alpha+\beta = \frac{2(2N+m-1)}{(m-1)N}$ from Lemma~\ref{lem:twoG}:
	\begin{align*}
		C_{\text{prod}}\;\sqrt{\frac{\pi}{\alpha+\beta}}
		&= \left( \frac{2}{\pi(m-1)} \sqrt{\frac{2}{\pi N}} \right)
		\cdot \sqrt{\frac{\pi (m-1) N}{2(2N+m-1)}} \\
		&= \left( \frac{2\sqrt{2}}{\pi^{3/2} (m-1) \sqrt{N}} \right)
		\cdot \left( \frac{\sqrt{\pi} \sqrt{m-1} \sqrt{N}}{\sqrt{2} \sqrt{2N+m-1}} \right) \\
		&= \frac{2}{\pi \sqrt{m-1} \sqrt{2N+m-1}}
		= \frac{2}{\pi} \cdot \frac{1}{\sqrt{(m-1)(2N+m-1)}}.
	\end{align*}
	Substituting this prefactor back into the sum yields \eqref{eq:triple-theta}.
	
	For \eqref{eq:triple-theta-simplified}, we substitute the expression for $\alpha+\beta$ into the exponent and use $N=n-2m$ in the denominator, noting that $2N+m-1 = 2(n-2m)+m-1 = 2n-3m-1$.
\end{proof}

\medskip

\begin{lemma}[Local Limit Theorem and Central Binomial Mass]\label{lem:LLT}
	Let $r\ge2$, $c:=\lfloor r/2\rfloor$ and $p_r(t):=2^{-r}\binom{r}{t}$.
	Let $g_r(t):=\sqrt{\frac{2}{\pi r}}\exp\!\big(-(2t-r)^2/(2r)\big)$.
	There exists an absolute $C_0>0$ such that
	\begin{equation}\label{eq:LLT}
		\sup_{t\in\mathbb Z}\,|\,p_r(t)-g_r(t)\,|\ \le\ C_0\,r^{-3/2}.
	\end{equation}
	In particular, at the center $t=c$,
	\begin{equation}\label{eq:central-LLT}
		\Big|\,p_r(c)-g_r(c)\,\Big|
		\ \le\ C_0\,r^{-3/2},
		\qquad
		g_r(c)=
		\begin{cases}
			\sqrt{\frac{2}{\pi r}}, & r \text{ even},\\[1ex]
			\sqrt{\frac{2}{\pi r}}\,e^{-1/(2r)}, & r \text{ odd}.
		\end{cases}
	\end{equation}
	Hence, for all $r\ge2$,
	\begin{equation}\label{eq:central-LLT-bds}
		\sqrt{\frac{2}{\pi r}}\,e^{-1/(2r)}-C_0\,r^{-3/2}
		\ \le\ p_r(c)\ \le\ \sqrt{\frac{2}{\pi r}}+C_0\,r^{-3/2}.
	\end{equation}
\end{lemma}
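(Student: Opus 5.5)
The sup-bound \eqref{eq:LLT} is the classical local limit theorem with the Edgeworth error term for lattice sums of i.i.d.\ bounded variables, specialized to the symmetric Bernoulli case. I will invoke it from \citet{petrov2012sums}; a self-contained route is only needed if a fully explicit $C_0$ is wanted. The point to stress is that for $\mathrm{Ber}(1/2)$ the third cumulant vanishes. The first Edgeworth correction, of order $r^{-1/2}$ relative to the Gaussian and hence absolute order $r^{-1}$, is therefore absent. The next term is of relative order $r^{-1}$, which gives an absolute error $O(r^{-3/2})$ uniformly in $t$.

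If a direct proof is preferred, I would split $t$ into a central window and the tails.

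\emph{Central window $|2t-r|\le r^{2/3}$.} Write $\binom{r}{t}$ with Stirling's formula including the $1/(12n)$ correction. Expand $\log p_r(t)=\log\sqrt{2/(\pi r)}-\tfrac{(2t-r)^2}{2r}+R$. Setting $x=(2t-r)/\sqrt r$, the remainder is $R=O(x^4/r + 1/r)$, because the cubic term cancels by symmetry of $p_r(t)=p_r(r-t)$. Then
\[
|p_r-g_r|\le g_r\,|e^R-1|\lesssim r^{-1/2}e^{-x^2/2}(1+x^4)/r,
\]
which is $O(r^{-3/2})$.

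\emph{Tails $|2t-r|>r^{2/3}$.} Both $p_r(t)$ (by Hoeffding, $\le e^{-r^{1/3}/2}$) and $g_r(t)$ are $O(e^{-cr^{1/3}})$, which is $O(r^{-3/2})$ after adjusting the constant.

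Small $r$ are handled by enlarging $C_0$, since all quantities are bounded by $1$. The main obstacle is keeping the window expansion uniform, so that the quartic term does not blow up at the window's edge. The choice of exponent $2/3$ handles this: there $x^4/r\le r^{-1/3}\to0$, and the Gaussian factor $e^{-x^2/2}$ absorbs the polynomial growth.

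For \eqref{eq:central-LLT}, take $t=c$. If $r$ is even, $2c-r=0$, so $g_r(c)=\sqrt{2/(\pi r)}$. If $r$ is odd, $2c-r=-1$, giving $g_r(c)=\sqrt{2/(\pi r)}\,e^{-1/(2r)}$. The bound $|p_r(c)-g_r(c)|\le C_0r^{-3/2}$ is the case $t=c$ of \eqref{eq:LLT}.

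Finally, \eqref{eq:central-LLT-bds} follows from the inequalities $\sqrt{2/(\pi r)}\,e^{-1/(2r)}\le g_r(c)\le\sqrt{2/(\pi r)}$, which hold in both parities, combined with the triangle inequality.
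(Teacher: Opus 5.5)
Your proposal is correct and matches the paper's proof, which likewise just invokes Petrov's uniform lattice local limit theorem with $p=q=\tfrac12$, then evaluates at $t=c$ and uses $\sqrt{2/(\pi r)}\,e^{-1/(2r)}\le g_r(c)\le\sqrt{2/(\pi r)}$ in both parities; your remark that the vanishing third cumulant of $\mathrm{Ber}(1/2)$ removes the $O(r^{-1})$ Edgeworth term is exactly why the error is $O(r^{-3/2})$. Your optional self-contained route is also sound and would give an explicit $C_0$, which the paper does not supply: Stirling on the window $|2t-r|\le r^{2/3}$, where the odd terms cancel and $R=O((1+x^4)/r)$, together with Hoeffding on the tails.
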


\begin{proof}
	This is a classical uniform local limit theorem, see \cite[Chapter 7, Theorem 13]{petrov2012sums} (with $p=q=\tfrac12$). Evaluating at $t=c$ gives \eqref{eq:central-LLT}; the
	bounds \eqref{eq:central-LLT-bds} follow since $g_r(c)$ is as displayed.
\end{proof}

\medskip

\begin{lemma}[Moments of the squared-binomial weights]
\label{lem:squared-binomial-moments}
Let \(r\ge0\), and define
\[
    p_r(j):=2^{-r}\binom rj,
    \qquad
    S_r:=\sum_{j=0}^{r}p_r(j)^2,
    \qquad
    w_j:=\frac{p_r(j)^2}{S_r},
    \quad j=0,\ldots,r.
\]
Let \(J\) have law \(\mathbb P(J=j)=w_j\). Then
\[
    J\sim \operatorname{Hypergeom}(2r,r,r),
\]
with the convention that \(J=0\) deterministically when \(r=0\). Moreover,
\[
    \mathbb E[J]=\frac r2,
    \qquad
    \operatorname{Var}(J)
    =
    \frac{r^2}{4(2r-1)}
    =
    \frac r8+O(1),
\]
and, for every \(|\eta|\le 1/2\),
\[
    \mathbb E\left[
    \left(\frac r2-J+\eta\right)^2
    \right]
    =
    \frac r8+O(1),
    \qquad
    \mathbb E\left[
    \left(\frac r2-J+\eta\right)^4
    \right]
    =
    O\bigl((r+1)^2\bigr).
\]
\end{lemma}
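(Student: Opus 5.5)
The plan is to identify the weights $w_j$ with a hypergeometric law through Vandermonde's identity, and then read off the moments. By Vandermonde's identity, $\sum_{j=0}^r \binom rj^2=\sum_j\binom rj\binom r{r-j}=\binom{2r}{r}$, so $S_r=2^{-2r}\binom{2r}{r}$. Hence
\[
w_j=\frac{\binom rj\binom{r}{r-j}}{\binom{2r}{r}},
\]
which is exactly the pmf of the number of marked items obtained when drawing $r$ items without replacement from a population of $2r$ items, $r$ of which are marked. This is $\operatorname{Hypergeom}(2r,r,r)$. For $r=0$ the sum has the single term $j=0$, so $J=0$ deterministically, matching the convention.

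The first two moments then follow from the standard hypergeometric formulas with population $N=2r$, $K=r$ marked items, and $n=r$ draws. The mean is $nK/N=r/2$. The variance is
\[
n\frac KN\Big(1-\frac KN\Big)\frac{N-n}{N-1}=\frac r4\cdot\frac{r}{2r-1}=\frac{r^2}{4(2r-1)}.
\]
Writing $\frac{r^2}{4(2r-1)}=\frac r8+\frac{r}{8(2r-1)}$ and noting that $\frac{r}{8(2r-1)}\le \frac18$ for $r\ge1$ gives $\operatorname{Var}(J)=r/8+O(1)$. For the shifted second moment, set $D:=r/2-J$, which is centered. Then
\[
\mathbb E\big[(D+\eta)^2\big]=\operatorname{Var}(J)+\eta^2,
\]
and since $\eta^2\le 1/4$, this equals $r/8+O(1)$.

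The fourth-moment bound is the only step that needs more than bookkeeping. First expand $(D+\eta)^4$. Since $|\eta|\le1/2$ and $\mathbb E|D|^k\le (\mathbb E D^4)^{k/4}$ for $k\le 4$, it suffices to show $\mathbb E D^4=O((r+1)^2)$.

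To bound $\mathbb E D^4$, I would use Hoeffding's theorem that sampling without replacement is dominated in convex order by sampling with replacement. Thus $\mathbb E f(J)\le \mathbb E f(B)$ for convex $f$, where $B\sim\mathrm{Bin}(r,1/2)$. Taking $f(x)=(x-r/2)^4$ gives
\[
\mathbb E D^4\le \mathbb E\big(B-r/2\big)^4=\frac{3r^2}{16}-\frac{r}{8}=O(r^2).
\]
Alternatively, one can compute the hypergeometric fourth central moment from its factorial moments, $\mathbb E[(J)_k]=(r)_k^2/(2r)_k$, and check that it is $3\operatorname{Var}(J)^2+O(r)$. The convex-order route is shorter, so I would present that one and mention the explicit computation only as a check. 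Combining these estimates gives $\mathbb E[(D+\eta)^4]=O((r+1)^2)$ uniformly in $|\eta|\le 1/2$, where the $+1$ absorbs the case $r=0$.
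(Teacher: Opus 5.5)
Your proof is correct and follows the paper's argument: Vandermonde's identity identifies $\operatorname{Hypergeom}(2r,r,r)$, the standard hypergeometric formulas give the mean and variance, and $\mathbb E[(r/2-J+\eta)^2]=\operatorname{Var}(J)+\eta^2$ handles the shifted second moment. The only difference is the fourth-moment step. You apply Hoeffding's convex-order comparison directly to $f(x)=(x-r/2)^4$ and use the binomial fourth central moment $3r^2/16-r/8$. The paper instead applies Hoeffding's tail inequality for sampling without replacement and integrates the tail to get $\mathbb E|J-r/2|^4\le r^2$, then handles $\eta$ via $|a+b|^4\le 8|a|^4+8|b|^4$ rather than your expansion with Lyapunov's inequality. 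Both routes rest on the same Hoeffding (1963) comparison, and yours gives a slightly sharper constant.
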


\begin{proof}
By Vandermonde's identity,
\[
    S_r
    =
    2^{-2r}\sum_{j=0}^r\binom rj^2
    =
    2^{-2r}\binom{2r}{r}.
\]
Thus, for \(r\ge1\),
\[
    w_j
    =
    \frac{\binom rj^2}{\binom{2r}{r}}
    =
    \frac{\binom rj\binom r{r-j}}{\binom{2r}{r}},
    \qquad j=0,\ldots,r,
\]
which is the probability mass function of
\(\operatorname{Hypergeom}(2r,r,r)\). Hence
\[
    \mathbb E[J]=r\cdot \frac r{2r}=\frac r2.
\]
Using the standard variance formula for
\(X\sim\operatorname{Hypergeom}(N,K,n)\),
\[
    \operatorname{Var}(X)
    =
    n\frac K N\left(1-\frac K N\right)\frac{N-n}{N-1},
\]
with \(N=2r\), \(K=r\), and \(n=r\), gives
\[
    \operatorname{Var}(J)
    =
    r\cdot\frac12\cdot\frac12\cdot\frac r{2r-1}
    =
    \frac{r^2}{4(2r-1)}
    =
    \frac r8+O(1).
\]

It remains to bound the fourth moment. View \(J\) as the number of marked elements
in a sample of size \(r\) drawn without replacement from a population of \(2r\)
elements, exactly \(r\) of which are marked. By Hoeffding's inequality for sampling
without replacement,
\[
    \mathbb P\left(\left|J-\frac r2\right|\ge t\right)
    \le
    2\exp\left(-\frac{2t^2}{r}\right),
    \qquad t\ge0.
\]
Therefore, using the tail-integral identity for nonnegative random variables,
\[
\begin{aligned}
    \mathbb E\left[\left|J-\frac r2\right|^4\right]
    &=
    \int_0^\infty 4t^3
    \mathbb P\left(\left|J-\frac r2\right|\ge t\right)\,dt \\
    &\le
    8\int_0^\infty t^3
    \exp\left(-\frac{2t^2}{r}\right)\,dt
    =
    r^2 .
\end{aligned}
\]
Finally, since \(\mathbb E[J-r/2]=0\), for \(|\eta|\le1/2\),
\[
\begin{aligned}
    \mathbb E\left[
    \left(\frac r2-J+\eta\right)^2
    \right]
    &=
    \operatorname{Var}(J)+\eta^2
    =
    \frac r8+O(1),
\end{aligned}
\]
and
\[
    |a+b|^4\le 8|a|^4+8|b|^4
\]
gives
\[
\begin{aligned}
    \mathbb E\left[
    \left(\frac r2-J+\eta\right)^4
    \right]
    &\le
    8\mathbb E\left[\left|J-\frac r2\right|^4\right]
    +
    8|\eta|^4  \\
    &=
    O\bigl((r+1)^2\bigr).
\end{aligned}
\]
The case \(r=0\) is immediate.
\end{proof}

\bigskip

\subsection{Exact Combinatorial Form of the Fold-Covariance}\label{app:maj_exact-combinatorial}

\begin{lemma}[Simplification of Covariance Term]
	\label{lemma:cov_simplification_final}
	Let $X_m \sim \mathrm{Bin}(m, 1/2)$, $\theta=(n-m)/2$, and $a(y) = \lfloor \theta-y \rfloor$. Then, for every fixed \(y\),
	\[\Cov_{X_m}(X_m, \mathbf{1}_{X_m > \theta-y}) = \frac{m}{4}\mathbb P(X_{m-1} = a(y))\]
	where $X_{m-1} \sim \mathrm{Bin}(m-1, 1/2)$.
\end{lemma}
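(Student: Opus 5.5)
The plan is to compute the covariance directly from the definition, for a fixed threshold. Fix \(y\) and set \(a:=a(y)=\lfloor\theta-y\rfloor\). Since \(X_m\) is integer-valued, \(\{X_m>\theta-y\}=\{X_m\ge a+1\}=\{X_m>a\}\), so the event depends on \(y\) only through the integer \(a\). Then
\[
\Cov(X_m,\mathbf 1_{X_m>a})=\mathbb E[X_m\mathbf 1_{X_m>a}]-\tfrac m2\,\mathbb P(X_m>a),
\]
and it suffices to show that this equals \(\tfrac m4\,\mathbb P(X_{m-1}=a)\) for every integer \(a\).

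The key tool is the absorption identity \(j\binom mj=m\binom{m-1}{j-1}\). It gives
\[
\mathbb E[X_m\mathbf 1_{X_m>a}]=\sum_{j>a}j\,2^{-m}\binom mj=\tfrac m2\sum_{j>a}2^{-(m-1)}\binom{m-1}{j-1}=\tfrac m2\,\mathbb P(X_{m-1}\ge a).
\]
Next, realize \(X_m=X_{m-1}+B\) with \(B\sim\mathrm{Ber}(1/2)\) independent of \(X_{m-1}\). Conditioning on \(B\),
\[
\mathbb P(X_m>a)=\tfrac12\,\mathbb P(X_{m-1}>a)+\tfrac12\,\mathbb P(X_{m-1}\ge a).
\]
Subtracting the two expressions, the covariance becomes
\[
\tfrac m2\Bigl[\mathbb P(X_{m-1}\ge a)-\tfrac12\mathbb P(X_{m-1}>a)-\tfrac12\mathbb P(X_{m-1}\ge a)\Bigr]=\tfrac m4\,\mathbb P(X_{m-1}=a),
\]
which is the claim.

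Finally, I will check the degenerate cases: \(a<0\), \(a\ge m\), and \(m=1\) with \(X_0\equiv0\). In each of these the identities above hold trivially, since all the sums are either empty or complete. I do not expect a genuine obstacle here. The only delicate points are bookkeeping: making sure the floor in \(a(y)\) correctly converts the strict inequality \(X_m>\theta-y\) into \(X_m\ge a+1\) when \(\theta-y\) is a half-integer or a negative number, and keeping the index shift in the absorption identity consistent at the boundary terms. Dividing by \(m\) then yields the scaled form stated in the lemma of Section~\ref{sec:maj}.
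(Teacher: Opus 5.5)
Your proposal is correct and follows the paper's proof step for step. The floor turns the strict inequality into $X_m\ge a(y)+1$, the absorption identity $j\binom{m}{j}=m\binom{m-1}{j-1}$ gives $\mathbb E[X_m\mathbf 1_{X_m\ge a+1}]=\tfrac m2\mathbb P(X_{m-1}\ge a)$, and the decomposition $X_m=X_{m-1}+B$ reduces the difference to $\tfrac m4\mathbb P(X_{m-1}=a)$; your check of the degenerate cases and of the rescaling by $1/m$ for the version in Section~\ref{sec:maj} makes explicit what the paper leaves implicit.
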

\begin{proof}
	Throughout, let \(C(m, y) := \Cov_{X_m}(X_m, \mathbf{1}_{X_m > \theta-y})\). Let $a(y) = \lfloor \theta-y \rfloor$. The event $X_m > \theta-y$ is equivalent to $X_m \ge \lfloor \theta-y \rfloor + 1 = a(y)+1$.
	The covariance is $C(m,y) = \mathbb{E}_{X_m}[X_m \mathbf{1}_{X_m \ge a(y)+1}] - \mathbb{E}_{X_m}[X_m] \cdot \mathbb P(X_m \ge a(y)+1)$.
	Since $X_m \sim \text{Bin}(m, 1/2)$, its expectation is $\mathbb{E}_{X_m}[X_m] = m/2$.
	The first term is $\mathbb{E}_{X_m}[X_m \mathbf{1}_{X_m \ge a(y)+1}] = \sum_{j=a(y)+1}^m j \binom{m}{j} (1/2)^m$. Using $j \binom{m}{j} = m \binom{m-1}{j-1}$:
	$$ \sum_{j=a(y)+1}^m m \binom{m-1}{j-1} (1/2)^m = \frac{m}{2} \sum_{j'=a(y)}^{m-1} \binom{m-1}{j'} (1/2)^{m-1} = \frac{m}{2} \mathbb P(X_{m-1} \ge a(y)) $$
	where $X_{m-1} \sim \text{Bin}(m-1, 1/2)$.
	So, $C(m,y) = \frac{m}{2} \mathbb P(X_{m-1} \ge a(y)) - \frac{m}{2} \mathbb P(X_m \ge a(y)+1)$.
	To simplify $\mathbb P(X_m \ge j+1)$, let $X_m = X_{m-1} + B$, where $B \sim \text{Bernoulli}(1/2)$ is independent of $X_{m-1}$.
	\begin{align*} \mathbb P(X_m \ge j+1) &= \mathbb P(X_{m-1} + B \ge j+1 | B=0)\mathbb P(B=0) + \mathbb P(X_{m-1} + B \ge j+1 | B=1)\mathbb P(B=1) \\ &= \frac{1}{2}\mathbb P(X_{m-1} \ge j+1) + \frac{1}{2}\mathbb P(X_{m-1} \ge j)\end{align*}
	Substituting this with $j=a(y)$:
	\begin{align*} C(m,y) &= \frac{m}{2} \left[\mathbb P(X_{m-1} \ge a(y)) - \left(\frac{1}{2}\mathbb P(X_{m-1} \ge a(y)+1) + \frac{1}{2}\mathbb P(X_{m-1} \ge a(y))\right)\right] \\ &= \frac{m}{4} \left[\mathbb P(X_{m-1} \ge a(y)) - \mathbb P(X_{m-1} \ge a(y)+1)\right] = \frac{m}{4} \mathbb P(X_{m-1} = a(y)) 
    \end{align*}
\end{proof}

\medskip

\begin{theorem}\label{thm:maj_exact-combinatorial}
It holds that
\[
\Cov(\widehat L_1, \widehat L_2)=2^{-n} \sum_{j=0}^{m-1} \binom{m-1}{j}^2 \binom{n-2m}{\lfloor (n-m)/2\rfloor-j }
\]
\end{theorem}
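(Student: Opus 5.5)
The plan is to combine Corollary~\ref{cor:majFactor} with Lemma~\ref{lemma:cov_simplification_final} and then expand the outer expectation over the shared-sample count as a finite sum. By Corollary~\ref{cor:majFactor},
\[
\Cov(\widehat L_1,\widehat L_2)=4\,\mathbb E_Y\!\left[\Cov\!\Big(\tfrac{X_m}{m},\mathbb 1_{\{X_m>\theta-Y\}}\,\Big|\,Y\Big)^2\right],
\]
with $Y\sim\mathrm{Bin}(n-2m,1/2)$ and $\theta=(n-m)/2$. Since $Y$ is independent of $X_m$, the conditional covariance is the covariance computed at the fixed value $y=Y$. Dividing Lemma~\ref{lemma:cov_simplification_final} by $m$ then gives
\[
\Cov\!\Big(\tfrac{X_m}{m},\mathbb 1_{\{X_m>\theta-y\}}\Big)=\tfrac14\,\mathbb P\big(X_{m-1}=\lfloor\theta-y\rfloor\big).
\]
Squaring and multiplying by $4$, I get
\[
\Cov(\widehat L_1,\widehat L_2)=\tfrac14\,\mathbb E_Y\!\left[\mathbb P(X_{m-1}=\lfloor\theta-Y\rfloor)^2\right].
\]

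Next I write the expectation as a sum over $y=0,\dots,n-2m$:
\[
\tfrac14\sum_{y}2^{-(n-2m)}\binom{n-2m}{y}\,2^{-2(m-1)}\binom{m-1}{\lfloor\theta-y\rfloor}^2 .
\]
The powers of two combine as $\tfrac14\cdot 2^{-(n-2m)}\cdot 2^{-(2m-2)}=2^{-n}$, which is exactly the normalization in the statement.

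For the index change I set $j=\lfloor\theta-y\rfloor$. Because $y$ is an integer, $\lfloor\theta-y\rfloor=\lfloor\theta\rfloor-y$, so $y=\lfloor(n-m)/2\rfloor-j$ and the map $y\mapsto j$ is a bijection of integers. Using the convention that binomial coefficients with out-of-range lower index vanish, the term $\binom{m-1}{j}^2$ restricts the sum to $0\le j\le m-1$. Any $j$ in this range with $y$ outside $[0,n-2m]$ contributes zero through $\binom{n-2m}{y}$. This yields
\[
2^{-n}\sum_{j=0}^{m-1}\binom{m-1}{j}^2\binom{n-2m}{\lfloor (n-m)/2\rfloor-j},
\]
as claimed.

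The only delicate point is the bookkeeping: checking the identity $\lfloor\theta-y\rfloor=\lfloor\theta\rfloor-y$ when $n-m$ is odd, and confirming that the boundary terms are handled correctly by the zero convention for binomial coefficients. I expect this step to be routine.
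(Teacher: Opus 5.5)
Your proposal is correct and follows essentially the same route as the paper. You combine Corollary~\ref{cor:majFactor} with Lemma~\ref{lemma:cov_simplification_final} to get $\tfrac14\mathbb E_Y[\mathbb P(X_{m-1}=\lfloor\theta-Y\rfloor)^2]$, then expand and reindex with $j=\lfloor\theta\rfloor-y$. The only cosmetic difference is that the paper rewrites the squared probability using two conditionally independent copies $X_1,X_2$ and sums over their common value $j$, whereas you expand the expectation over $y$ directly; these are the same computation.
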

\begin{proof}
We know from combining Corollary~\ref{cor:majFactor} and Lemma~\ref{lemma:cov_simplification_final} that $\Cov(\widehat L_1, \widehat L_2)=\tfrac{1}{4} \mathbb E_Y[\mathbb P(X=\lfloor \theta-Y \rfloor \mid Y)^2]$ where \(X\sim \operatorname{Bin}(m-1,1/2)\) is independent of \(Y\). Furthermore,
\begin{align*} 
	 \mathbb E_Y[\mathbb P(X=&\lfloor \theta-Y \rfloor \mid Y)^2]\\
	&= \mathbb E_Y[\mathbb P(X_1=\lfloor \theta-Y \rfloor, X_2=\lfloor \theta-Y \rfloor | Y) ] \quad (\text{introducing } X_1, X_2 \text{ cond. indep. given } Y) \\
	&= \mathbb P(X_1=\lfloor \theta-Y \rfloor, X_2=\lfloor \theta-Y \rfloor) \quad (\text{by Law of Total Expectation}) \\
	&= \sum_{j=0}^{m-1} \mathbb P(X_1 = j, X_2=j,  \lfloor \theta-Y \rfloor=j) \quad (\text{summing over the support of } X_1, X_2)\\
	&= \sum_{j=0}^{m-1} \mathbb P(X_1=j, X_2=j) \cdot\mathbb P(j=\lfloor \theta-Y \rfloor) \quad (\text{by independence of } (X_1, X_2) \text{ and } Y)\\
	&= \sum_{j=0}^{m-1} \mathbb P(X_1=j)^2 \mathbb P(\lfloor \theta-Y \rfloor=j) \quad (\text{by independence of } X_1, X_2)\\
	&=\left(\frac{1}{2^{n-2}}\right) \sum_{j=0}^{m-1} \binom{m-1}{j}^2 \binom{n-2m}{\lfloor (n-m)/2\rfloor-j }  \quad (\text{writing out definition}).
\end{align*}
Multiplying by the prefactor \(1/4\) in the first display gives the claimed factor \(2^{-n}\).
\end{proof}

\bigskip\bigskip

\subsection{Main Results for the Majority Algorithm Fold-Covariance}

\begin{theorem}[Precise binomial form]\label{thm:maj_sublin}
	Fix integers $n$ and $m|n$.
Let $N:=n-2m$, and choose $N_C$ as the even integer closest to $n-\tfrac32 m$.

Then, one has
\[
\operatorname{Cov}(n,m)
=
S_{m-1}\frac{1}{2\sqrt{\pi(2n-3m)}}
+
O\!\left(n^{-3/2}+\frac{m^{3/2}}{n^{5/2}}\right),
\]
uniformly for all \(1\le m\le n/3\), where the error term is negligible once \(m=o(n)\).

\end{theorem}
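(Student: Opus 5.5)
We start from the exact form in Theorem~\ref{thm:maj_exact-combinatorial},
\[
\Cov(n,m)=\frac14\sum_{j=0}^{m-1}p_{m-1}(j)^2\,p_N(\ell_0-j),\qquad \ell_0:=\lfloor (n-m)/2\rfloor,\ N=n-2m .
\]
This is $\tfrac14 S_{m-1}\,\mathbb E[p_N(\ell_0-J)]$, where $J$ has law $w_j=p_{m-1}(j)^2/S_{m-1}$. By Lemma~\ref{lem:squared-binomial-moments}, $J\sim\operatorname{Hypergeom}(2(m-1),m-1,m-1)$, with mean $(m-1)/2$, variance $(m-1)/8+O(1)$, and fourth central moment $O(m^2)$. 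So the whole task is to show
\[
\mathbb E[p_N(\ell_0-J)]=\frac{2}{\sqrt{\pi(2n-3m)}}+O\!\Big(\frac{1}{n^{3/2}}+\frac{m^{3/2}}{n^{5/2}}\Big)\Big/S_{m-1}.
\]
The additive error in the theorem is for $\Cov$. Since $S_{m-1}\le 1$, it suffices to get an $O(n^{-3/2}+m^{3/2}n^{-5/2})$ error on $\tfrac14 S_{m-1}\mathbb E[\cdot]$. Multiplying the error by $S_{m-1}\le1$ only helps.

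\textbf{Step 1 (LLT replacement).} Since $N\ge n/3$ when $m\le n/3$, Lemma~\ref{lem:LLT} gives $p_N(t)=g_N(t)+O(n^{-3/2})$ uniformly in $t$. Averaging over $J$ therefore costs $O(n^{-3/2})$, so it remains to evaluate $\mathbb E[g_N(\ell_0-J)]$.

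\textbf{Step 2 (Taylor expansion around the centre).} Write $\ell_0-J-N/2=(m/2-J)-\eta$ with $|\eta|\le 1/2$, where $\eta$ absorbs the floor. Set $D:=(m-1)/2-J+\eta'$ with $|\eta'|\le1/2$, so that $\ell_0-J-N/2=D$ up to this bounded shift. Then
\[
g_N(\ell_0-J)=\sqrt{\tfrac{2}{\pi N}}\,e^{-2D^2/N}.
\]
Use $e^{-x}=1-x+O(x^2)$ for $x\ge0$:
\[
\mathbb E\,e^{-2D^2/N}=1-\frac{2\mathbb E D^2}{N}+O\!\Big(\frac{\mathbb E D^4}{N^2}\Big).
\]
By Lemma~\ref{lem:squared-binomial-moments} with $r=m-1$, $\mathbb E D^2=m/8+O(1)$ and $\mathbb E D^4=O(m^2)$. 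Hence
\[
\mathbb E[g_N(\ell_0-J)]=\sqrt{\tfrac{2}{\pi N}}\Big(1-\frac{m}{4N}+O\big(\tfrac1N+\tfrac{m^2}{N^2}\big)\Big).
\]

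\textbf{Step 3 (matching the target constant).} Compare with
\[
\frac{2}{\sqrt{\pi(2n-3m)}}=\sqrt{\frac{2}{\pi(N+m/2)}}=\sqrt{\tfrac{2}{\pi N}}\Big(1-\frac{m}{4N}+O\big(\tfrac{m^2}{N^2}\big)\Big).
\]
The first-order terms agree. The residual is $\sqrt{1/N}\cdot O(1/N+m^2/N^2)=O(n^{-3/2}+m^2n^{-5/2})$.

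\textbf{Main obstacle.} The error from Step~3 is $m^2n^{-5/2}$, but the theorem claims $m^{3/2}n^{-5/2}$. The extra $m^{-1/2}$ has to come from the prefactor $S_{m-1}=\Theta(m^{-1/2})$, which multiplies everything. Concretely, the $O(n^{-3/2})$ LLT error and the Taylor error both get multiplied by $\tfrac14 S_{m-1}$. This gives $S_{m-1}\cdot m^2 n^{-5/2}=O(m^{3/2}n^{-5/2})$ and $S_{m-1}n^{-3/2}\le n^{-3/2}$, which are exactly the stated terms.

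The point needing care is that the Taylor remainder must be bounded for all $x=2D^2/N\ge0$, including the tails. The bound $0\le e^{-x}-1+x\le x^2/2$ holds for every $x\ge 0$, so the fourth moment controls it directly and no truncation is needed. We also need uniformity for small $m$, in particular $m=1$ where $r=0$; this is covered by the $r=0$ convention of Lemma~\ref{lem:squared-binomial-moments}. The choice of $N_C$ is not needed in this route; it would only enter if one preferred to expand $p_{N_C}$ at a central point instead.
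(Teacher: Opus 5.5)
Your proposal is correct and follows essentially the paper's argument: the LLT replacement, the hypergeometric moments of the squared-binomial weights, the second-order Taylor bound $0\le e^{-x}-1+x\le x^2/2$, the first-order matching of $t^{-1/2}$, and the factor $S_{m-1}=O(m^{-1/2})$ that turns $m^2n^{-5/2}$ into $m^{3/2}n^{-5/2}$. The only difference is that you compare directly to $2/\sqrt{\pi(2n-3m)}=\sqrt{2/(\pi(N+m/2))}$, whereas the paper first passes through the central mass $P_{N_c}$ and approximates it by a separate LLT step; skipping that intermediate step is a harmless streamlining.
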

\begin{proof}
		Define
	\[
	q_t(r):=2^{-t}\binom{t}{r}.
	\]
	Set
	\[
	p(j):=2^{-(m-1)}\binom{m-1}{j},\quad
	m_0:=\Big\lfloor \frac{n-m}{2}\Big\rfloor,\quad
	P_N(r):=q_N(r),\quad P_{N_c}:=q_{N_c}(N_c/2).
	\]
	Then
	\begin{equation}\label{eq:E-split}
		E(n, m)=\sum_{j=0}^{m-1} p(j)^2\,P_N(m_0-j)
		\ =\ S_{m-1}\,P_{N_c}\;+\;R_1,
		\quad
		R_1:=\sum_{j=0}^{m-1} p(j)^2\big(P_N(m_0-j)-P_{N_c}\big).
	\end{equation}
	
	\paragraph{Step 1: LLT expansions.}
	Apply \eqref{eq:LLT} to $p_{N}(m_0-j)$ and $S_{N_c/2}$.
	\[
	p_N(m_0-j)=G_N(j)+\delta_N(j),
	\qquad
	S_{N_c/2}=G_{N_c}+\delta_{N_c},
	\]
	where
	\[
	G_N(j):=\frac{1}{\sqrt{\pi N/2}}\exp\!\Big(-\frac{2\Delta_j^2}{N}\Big),\quad
	G_{N_c}:=\frac{1}{\sqrt{\pi N_c/2}},
	\quad
	\Delta_j:=m_0-j-\frac{N}{2}=\frac{m}{2}-j-\theta,\ \theta\in[0,1),
	\]
	and $|\delta_N(j)|\le C_{\LLT}N^{-3/2}$, $|\delta_{N_c}|\le C_{\LLT}N_c^{-3/2}$.
	
	Rigorously, $P_{N_c} := p_{N_c}(c_{N_c})$ with $c_{N_c} := \lfloor N_c/2 \rfloor$, so $G_{N_c} := g_{N_c}(c_{N_c}) = \sqrt{\frac{2}{\pi N_c}}e^{-(2c_{N_c}-N_c)^2/(2N_c)}$. By \eqref{eq:central-LLT}
	this is $\sqrt{\frac{2}{\pi N_c}}$ if $N_c$ even or $\sqrt{\frac{2}{\pi N_c}}e^{-1/(2N_c)}$ if $N_c$ odd. As $e^{-1/(2N_c)}=1+O(N_c^{-1})$, in both cases $G_{N_c} = \sqrt{\frac{2}{\pi N_c}} + O(N_c^{-3/2}).$
	\paragraph{Step 2: Decomposition of $R_1$.}
	Plugging in the Gaussian approximation, we get
	\begin{equation}\label{eq:R1-main}
		R_1=\sum_{j=0}^{m-1}p_{m-1}(j)^2\Big(G_N(j)-G_{N_c}\Big)
		\;+\;\underbrace{\sum_{j=0}^{m-1}p_{m-1}(j)^2\big(\delta_N(j)-\delta_{N_c}\big)}_{=:R_{\LLT}}.
	\end{equation}
	
	\paragraph{Step 3: Bounding the LLT remainder.}
	By $S_{m-1}\le 1$ and the local limit theorem bound of Lemma~\ref{lem:LLT},
	\[
	|R_{\LLT}|
	\ \le\ \sum_j p_{m-1}(j)^2\big(|\delta_N(j)|+|\delta_{N_c}|\big)
	\ \le\ C_{\LLT}\Big(\frac{1}{N^{3/2}}+\frac{1}{N_c^{3/2}}\Big)
	\ =\ O\!\Big(\frac{1}{n^{3/2}}\Big).
	\]
	
	\paragraph{Step 4. $N_c$ from the first–order optimal Gaussian central term.}

Next, we bound
\[
\sum_{j=0}^{m-1}p(j)^2\bigl(G_N(j)-G_{N_c}\bigr),
\]
where \(N=n-2m\). Define the probability weights
\[
    w_j:=\frac{p(j)^2}{S_{m-1}},
    \qquad j\in\{0,\ldots,m-1\},
\]
and let \(\mathbb E_w\) denote expectation with respect to these weights. Let
\(J\sim w\). By symmetry,
\[
    \mathbb E_w[J]=\frac{m-1}{2}.
\]
As before, write
\[
    \Delta_j:=m_0-j-\frac N2
    =
    \frac m2-j-\theta,
    \qquad \theta\in[0,1).
\]
Then
\[
    \Delta_J=\left(\frac{m-1}{2}-J\right)+\left(\frac12-\theta\right).
\]

By Lemma~\ref{lem:squared-binomial-moments}, applied with \(r=m-1\), we have
\[
    J\sim \operatorname{Hypergeom}\bigl(2(m-1),m-1,m-1\bigr),
\]
and
\[
    \mathbb E_w[J]=\frac{m-1}{2},
    \qquad
    \operatorname{Var}_w(J)
    =
    \frac{(m-1)^2}{4(2m-3)}
    =
    \frac{m-1}{8}+O(1).
\]
Since
\[
    \Delta_J
    =
    \left(\frac{m-1}{2}-J\right)
    +
    \left(\frac12-\theta\right),
    \qquad \theta\in[0,1),
\]
Lemma~\ref{lem:squared-binomial-moments}, with
\(\eta=1/2-\theta\), gives
\[
    \mathbb E_w[\Delta_J^2]
    =
    \frac{m-1}{8}+O(1),
    \qquad
    \mathbb E_w[\Delta_J^4]
    =
    O(m^2).
\]

Set
\[
    c(t):=\sqrt{\frac2\pi}\,t^{-1/2}.
\]
Then
\[
\begin{aligned}
\sum_{j=0}^{m-1}p(j)^2\bigl(G_N(j)-G_{N_c}\bigr)
&=
S_{m-1}
\left\{
c(N)\mathbb E_w\!\left[
\exp\!\left(-\frac{2\Delta_J^2}{N}\right)-1
\right]
-\bigl(c(N_c)-c(N)\bigr)
\right\}.
\end{aligned}
\]
Let
\[
    x_J:=\frac{2\Delta_J^2}{N}.
\]
Since
\[
    0\le e^{-x}-(1-x)\le \frac{x^2}{2},
    \qquad x\ge0,
\]
we get
\[
\begin{aligned}
\mathbb E_w\!\left[
\exp\!\left(-\frac{2\Delta_J^2}{N}\right)-1
\right]
&=
-\frac{2}{N}\mathbb E_w[\Delta_J^2]
+
O\!\left(\frac{\mathbb E_w[\Delta_J^4]}{N^2}\right) \\
&=
-\frac{m-1}{4N}
+
O\!\left(\frac1N+\frac{m^2}{N^2}\right).
\end{aligned}
\]
On the other hand, since \(N_c\) is the even integer closest to
\(n-\frac32m\), we have
\[
    N_c-N=\frac m2+O(1).
\]
A Taylor expansion of \(c(t)\) around \(N\) gives
\[
\begin{aligned}
    c(N_c)-c(N)
    &=
    -\frac{c(N)}{2}\frac{N_c-N}{N}
    +
    O\!\left(c(N)\frac{(N_c-N)^2}{N^2}\right) \\
    &=
    -c(N)\frac{m}{4N}
    +
    O\!\left(\frac{c(N)}{N}
    +
    c(N)\frac{m^2}{N^2}\right).
\end{aligned}
\]
Combining the last two displays,
\[
\left|
\sum_{j=0}^{m-1}p(j)^2\bigl(G_N(j)-G_{N_c}\bigr)
\right|
\le
C S_{m-1}
\left(
\frac{c(N)}{N}
+
c(N)\frac{m^2}{N^2}
\right).
\]
Since \(m\le n/3\), we have \(N\asymp n\) and \(c(N)\asymp n^{-1/2}\).
Also \(S_{m-1}=O(m^{-1/2})\), with the convention that this bound is harmless
at \(m=1\). Hence
\[
\left|
\sum_{j=0}^{m-1}p(j)^2\bigl(G_N(j)-G_{N_c}\bigr)
\right|
=
O\!\left(n^{-3/2}+\frac{m^{3/2}}{n^{5/2}}\right).
\]

	\paragraph{Step 5: Completing the proof}
Collecting Step 3 and Step 4, we have
\[
    |R_1|
    \le
    C\left(n^{-3/2}+\frac{m^{3/2}}{n^{5/2}}\right),
\]
uniformly for \(1\le m\le n/3\). Furthermore,
\[
    P_{N_c}
    =
    G_{N_c}+\delta_{N_c}
    =
    \sqrt{\frac{2}{\pi N_c}}
    +
    O(N_c^{-3/2}).
\]
Since
\[
    N_c=\frac{2n-3m}{2}+O(1),
\]
we also have
\[
    \sqrt{\frac{2}{\pi N_c}}
    =
    \frac{2}{\sqrt{\pi(2n-3m)}}+O(n^{-3/2}).
\]
Thus, 
\[
\begin{aligned}
    E(n,m)
    &=
    S_{m-1}\frac{2}{\sqrt{\pi(2n-3m)}}
    +
    O\!\left(n^{-3/2}+\frac{m^{3/2}}{n^{5/2}}\right).
\end{aligned}
\]
Recalling that \(\operatorname{Cov}(n,m)=E(n,m)/4\), we obtain
\[
    \operatorname{Cov}(n,m)
    =
    S_{m-1}\frac{1}{2\sqrt{\pi(2n-3m)}}
    +
    O\!\left(n^{-3/2}+\frac{m^{3/2}}{n^{5/2}}\right),
\]
uniformly for \(1\le m\le n/3\). This completes the proof.

\end{proof}

\medskip

\begin{theorem}[Endpoint cases]\label{thm:maj_small-k-mono}

	\noindent{\bf (A) Exact expression for $m=1$.}
It holds that
	\[
	\Cov(n, 1)=2^{-n}\binom{\,n-2\,}{\big\lfloor \frac{n-1}{2}\big\rfloor}.
	\]
	Consequently, by the LLT in Lemma~\ref{lem:LLT},
	\[
	\Cov(n, 1)=\sqrt{\frac{1}{8\pi(n-2)}}\;+\;O\!\Big(\frac{1}{n^{3/2}}\Big)
	=\sqrt{\frac{1}{8\pi n}}\;+\;O\!\Big(\frac{1}{n}\Big).
	\]

	\medskip
	\noindent{\bf (B) Endpoint asymptotic for $m=n/2$.}
	It holds that
		\[
	\Cov\Big(n,\tfrac{n}{2}\Big)=\frac{1}{\pi(n-2)}+O\!\Big(\frac{1}{n^{2}}\Big)
	=\frac{1}{\pi n}+O\Big(\frac{1}{n^{2}}\Big).
	\]
\end{theorem}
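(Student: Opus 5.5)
Both cases come from specializing the exact convolution formula of Theorem~\ref{thm:exactComb} (equivalently Theorem~\ref{thm:maj_exact-combinatorial}). At each endpoint the sum over $j$ collapses to a single term, which is a power of a (near-)central binomial mass. The asymptotics then follow from the local limit theorem, Lemma~\ref{lem:LLT}, specifically its central form \eqref{eq:central-LLT-bds}. No Poisson summation or triple-Gaussian machinery is needed.

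\emph{Part (A), $m=1$.} With $m=1$ the sum has only the $j=0$ term, and $\binom{0}{0}^2=1$, which gives the exact identity
\[
\Cov(n,1)=2^{-n}\binom{n-2}{\lfloor (n-1)/2\rfloor}=\tfrac14\,p_{n-2}\bigl(\lfloor (n-1)/2\rfloor\bigr).
\]
Next I check that the index is a central index of $\mathrm{Bin}(n-2,1/2)$. For $n$ even, $\lfloor (n-1)/2\rfloor=(n-2)/2$, which is exactly the center. For $n$ odd, $(n-1)/2=\lceil (n-2)/2\rceil$, and by the symmetry $\binom{r}{t}=\binom{r}{r-t}$ its mass equals $p_{n-2}(\lfloor (n-2)/2\rfloor)$. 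Lemma~\ref{lem:LLT} with $r=n-2$ then gives $p_{n-2}(c)=\sqrt{2/(\pi(n-2))}+O(n^{-3/2})$; the factor $e^{-1/(2r)}=1+O(1/r)$ in the odd case is absorbed into the error. Multiplying by $1/4$ yields $\sqrt{1/(8\pi(n-2))}+O(n^{-3/2})$. Finally, $\sqrt{1/(n-2)}-\sqrt{1/n}=O(n^{-3/2})$, so the weaker second form with error $O(1/n)$ also holds.

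\emph{Part (B), $m=n/2$.} Here $N=n-2m=0$, and $\binom{0}{\lfloor n/4\rfloor-j}$ is nonzero only for $j=\lfloor n/4\rfloor=\lfloor m/2\rfloor$, which lies in $\{0,\dots,m-1\}$. Hence
\[
\Cov(n,n/2)=2^{-n}\binom{m-1}{\lfloor m/2\rfloor}^2=\tfrac14\,p_{m-1}\bigl(\lfloor m/2\rfloor\bigr)^2 ,
\]
using $2^{-n}=\tfrac14\,2^{-2(m-1)}$. The index $\lfloor m/2\rfloor$ is again central for $\mathrm{Bin}(m-1,1/2)$. If $m$ is odd it equals $(m-1)/2$. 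If $m$ is even it equals $\lceil (m-1)/2\rceil$, whose mass equals the floor-central mass by symmetry.

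Applying Lemma~\ref{lem:LLT} with $r=m-1$ gives $p_{m-1}(c)=\sqrt{2/(\pi(m-1))}+O(m^{-3/2})$. Squaring, the cross term is $O(m^{-1/2}\cdot m^{-3/2})=O(m^{-2})$, so
\[
p_{m-1}(c)^2=\frac{2}{\pi(m-1)}+O(m^{-2}).
\]
Multiplying by $1/4$ and substituting $m-1=(n-2)/2$ gives $\frac{1}{\pi(n-2)}+O(n^{-2})$. Since $\frac{1}{n-2}-\frac1n=O(n^{-2})$, this equals $\frac{1}{\pi n}+O(n^{-2})$.

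\emph{Main obstacle.} The only delicate point is the parity bookkeeping: the floor index must be identified with a central mass in both parities, which uses the symmetry of the binomial coefficients. One must also make sure the LLT error remains of the claimed order after squaring in (B), which holds because the leading term is only $O(m^{-1/2})$. Small $n$ are absorbed into the $O(\cdot)$ constants.
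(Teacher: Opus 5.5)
Your proof is correct and follows essentially the same route as the paper: you specialize the exact convolution formula so the sum collapses to a single near-central binomial mass (squared, in case (B)), then apply the local limit theorem of Lemma~\ref{lem:LLT}. The only cosmetic difference is in handling the off-center index: you map it to the floor-central index by binomial symmetry, whereas the paper notes that the offset $r-\ell/2\in\{0,1/2\}$ contributes only a factor $1+O(\ell^{-1})$ in the Gaussian proxy.
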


\begin{proof}

	\textbf{(A) The case $m=1$.}
	When $m=1$, the sum has only $j=0$ and $\binom{m-1}{0}^2=1$, so
	\[
	E(n, 1)=2^{-(n-2)}\binom{n-2}{\big\lfloor \frac{n-1}{2}\big\rfloor}.
	\]
	This is exactly the central (or near-central) mass of $\mathrm{Bin}(n-2,\tfrac12)$; by Lemma~\ref{lem:LLT},
	\[
	E(n, 1)=\frac{1}{\sqrt{\pi (n-2)/2}}+O\!\Big(\frac{1}{(n-2)^{3/2}}\Big)
	=\sqrt{\frac{2}{\pi(n-2)}}+O\!\Big(\frac{1}{n^{3/2}}\Big)
	=\sqrt{\frac{2}{\pi n}}+O\!\Big(\frac{1}{n}\Big).
	\]

	\textbf{(B) The case $m=\frac{n}{2}$ (so $n$ even).}
	Set $\ell:=\frac{n}{2}-1$ and observe
	\[
	E\Big(n,\tfrac{n}{2}\Big)
	=2^{-(n-2)}\sum_{j=0}^{\ell}\binom{\ell}{j}^{\!2}\binom{0}{\lfloor n/4\rfloor-j}.
	\]
	Since $\binom{0}{r}=\mathbf{1}\{r=0\}$, only the term $j=r:=\lfloor n/4\rfloor$ survives:
	\[
	E\Big(n,\tfrac{n}{2}\Big)
	=\Big(2^{-\ell}\binom{\ell}{r}\Big)^{2}=:q_\ell(r)^{2},
	\]
	i.e. it is the square of a symmetric binomial mass $q_\ell(r)=\Pr\{\mathrm{Bin}(\ell,\tfrac12)=r\}$.

    Note that
\[
    r-\frac{\ell}{2}
    =
    \begin{cases}
        1/2, & n\equiv 0 \pmod 4,\\
        0,   & n\equiv 2 \pmod 4.
    \end{cases}
\]
In either case,
\[
    \frac{2(r-\ell/2)^2}{\ell}=O(\ell^{-1}),
\]
and hence
\[
    q_\ell(r)
    =
    \sqrt{\frac{2}{\pi\ell}}\left(1+O(\ell^{-1})\right).
\]
Therefore
\[
    E(n,n/2)=q_\ell(r)^2
    =
    \frac{2}{\pi\ell}+O(\ell^{-2})
    =
    \frac{4}{\pi(n-2)}+O(n^{-2}),
\]
and since \(\operatorname{Cov}(n,n/2)=E(n,n/2)/4\),
\[
    \operatorname{Cov}(n,n/2)
    =
    \frac{1}{\pi(n-2)}+O(n^{-2}).
\]
	
\end{proof}

		\medskip

\begin{theorem}[Covariance for large $m$]\label{thm:maj_large-m}
		Uniformly over all integer sequences \(m=m_n\) satisfying
\(m=\omega_n(1)\) and \(m\le n/3\),
		\[
		\Cov(n,m)=\frac{1}{2\pi\sqrt{(m-1)(2n-3m)}}
		 +O\bigg(\frac{1}{\sqrt{n}m^{3/2}}\bigg).
		\]
	\end{theorem}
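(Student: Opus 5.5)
We will work with $E(n,m)=4\Cov(n,m)=\sum_{j=0}^{m-1}p_{m-1}(j)^2\,p_N(m_0-j)$, where $N=n-2m$ and $m_0=\lfloor (n-m)/2\rfloor$. The plan is to replace every binomial mass by its Gaussian proxy via Lemma~\ref{lem:LLT}. The resulting lattice sum of the triple product is then evaluated exactly by Lemma~\ref{lem:triple-sum-theta}.

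\textbf{Step 1: Gaussian replacement.} Write $p_{m-1}(j)=g_{m-1}(j)+\delta(j)$ and $p_N(m_0-j)=g_N(m_0-j)+\delta'(j)$, with $|\delta|\le C_0(m-1)^{-3/2}$ and $|\delta'|\le C_0N^{-3/2}$. Expanding $p^2p_N$ gives three kinds of error terms.
\begin{itemize}
\item Terms with $\delta'$: these are bounded by $N^{-3/2}\sum_j p_{m-1}(j)^2\le N^{-3/2}$, which is $O(n^{-3/2})\subseteq O(n^{-1/2}m^{-3/2})$.
\item Terms linear in $\delta$: these are bounded by $2\sup|\delta|\cdot\sum_j (p_{m-1}(j)+g_{m-1}(j))\,g_N(\cdot)$. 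Since $\sum_j p_{m-1}(j)=O(1)$, $\sum_j g_{m-1}(j)=O(1)$ and $\sup g_N=O(N^{-1/2})$, this gives $O(m^{-3/2}n^{-1/2})$.
\item Terms quadratic in $\delta$: there are $m$ of them, each $O(m^{-3})$, multiplied by $O(N^{-1/2})$. This is smaller still.
\end{itemize}
Since $m\le n/3$ gives $N\asymp n$, all errors fit in $O(1/(\sqrt n\,m^{3/2}))$. We must also extend the $j$-sum from $\{0,\dots,m-1\}$ to $\mathbb Z$ for the Gaussian terms. The tails $j<0$ and $j>m-1$ contribute $e^{-\Omega(m)}$ times the main term, which is negligible.

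\textbf{Step 2: the shift $\ell$ versus $m_0$.} Lemma~\ref{lem:triple-sum-theta} is stated with $\ell=(n-m)/2$, whereas the summand uses $m_0=\lfloor\ell\rfloor$, which differs by $\theta\in\{0,1/2\}$. Two routes are available.
\begin{itemize}
\item Redo the completing-the-square of Lemma~\ref{lem:twoG} with $b=m/2-\theta$. This only changes $\mu$ and the constant $\frac{\alpha\beta}{\alpha+\beta}(a-b)^2=O(1/n)$.
\item Alternatively, bound $|g_N(m_0-j)-g_N(\ell-j)|=O(N^{-1/2}\cdot|\Delta_j|/N)$ and average using the weights $g_{m-1}^2$, for which $\mathbb E|\Delta|=O(\sqrt m)$. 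This gives $O(m^{-1/2}\cdot m^{1/2}n^{-3/2})=O(n^{-3/2})$.
\end{itemize}
Either route gives an error within the target.

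\textbf{Step 3: evaluate the theta sum.} Lemma~\ref{lem:triple-sum-theta} gives
\[
\frac{2}{\pi\sqrt{(m-1)(2n-3m-1)}}\,e^{-1/(2n-3m-1)}\,\Theta_{n,m}.
\]
Here $\alpha+\beta\asymp 1/m$, so $|\Theta_{n,m}-1|\le 2\sum_{t\ge1}e^{-\pi^2t^2 (m-1)N/(2(2n-3m-1))}=e^{-\Omega(m)}$, which is $O(m^{-3/2})$ relative. Replacing $2n-3m-1$ by $2n-3m$ and dropping $e^{-1/(2n-3m-1)}$ each cost a relative $O(1/n)$. Dividing by $4$ then yields the main term $\frac{1}{2\pi\sqrt{(m-1)(2n-3m)}}$. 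All relative errors, multiplied by the main term of order $(mn)^{-1/2}$, are $O(n^{-1/2}m^{-3/2})$.

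The main obstacle is the bookkeeping in Step 1. A crude bound on the cross term $\sum_j p\,\delta\, g_N$ using $m$ terms would give $O(m\cdot m^{-3/2}\cdot m^{-1/2}n^{-1/2})=O(n^{-1/2}m^{-1})$, which is too weak. It is essential to sum $p_{m-1}(j)$ (total mass $1$) rather than counting terms, which yields the required $m^{-3/2}$.
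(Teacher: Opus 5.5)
Your decomposition matches the paper's proof. You replace $p_N$ and then $p_{m-1}$ by Gaussians via Lemma~\ref{lem:LLT}, truncate the lattice sum to $\mathbb Z$ using Gaussian tails, evaluate the theta sum of Lemma~\ref{lem:triple-sum-theta}, and handle the half-integer shift by re-completing the square. However, one of your estimates does not give the stated error term.

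In the first bullet of Step~1 you bound the $\delta'$ contribution by $N^{-3/2}\sum_j p_{m-1}(j)^2\le N^{-3/2}$. You then claim $O(n^{-3/2})\subseteq O(n^{-1/2}m^{-3/2})$. That inclusion is equivalent to $m^{3/2}=O(n)$, i.e.\ $m=O(n^{2/3})$. So it fails exactly in the large-$m$ regime the theorem is about: at $m\asymp n$ the claimed error is $O(n^{-2})$, and your bound $O(n^{-3/2})$ is larger by a factor $\sqrt n$.

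The fix is to keep the size of the squared weights. By Vandermonde, $\sum_{j}p_{m-1}(j)^2=S_{m-1}=2^{-2(m-1)}\binom{2m-2}{m-1}=O(m^{-1/2})$. Hence this term is $O(m^{-1/2}n^{-3/2})=O\bigl(\tfrac mn\cdot n^{-1/2}m^{-3/2}\bigr)$, which is absorbed since $m\le n/3$. This is exactly how the paper bounds its term $T_1$. It is the same lesson as your closing remark, applied to the squared weights: their total mass is of order $m^{-1/2}$, not $1$.

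The same over-crude averaging appears in your second route for Step~2. You bound $|g_N(m_0-j)-g_N(\ell-j)|$ by $O(N^{-3/2}(|\Delta_j|+1))$ and average $|\Delta_j|=O(\sqrt m)$ against weights of total mass $O(m^{-1/2})$. This yields only $O(n^{-3/2})$, again too large when $m\gg n^{2/3}$. That route would additionally need a signed Taylor expansion, in which the first-order term nearly cancels because the weights are almost symmetric about $j\approx m/2$.

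Your first route is what the paper does, and it is fine. Redoing Lemma~\ref{lem:twoG} with the second center at $m/2-\varepsilon$ changes $\mu$ and multiplies the prefactor by $1+O(1/n)$. A relative $O(1/n)$ error on a main term of order $(mn)^{-1/2}$ is $O(m^{-1/2}n^{-3/2})$, which is within the target.

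With the $\delta'$ term corrected, your remaining estimates coincide with the paper's bounds on $T_2$, $T_3$ and $T_4$ and yield the theorem. These are the terms linear and quadratic in $\delta$, the tail truncation, and $|\Theta_{n,m}-1|=e^{-\Omega(m)}$.
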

	
	\begin{proof}
		Write $p_r(t):=2^{-r}\binom{r}{t}$ and
		$g_r(t):=\sqrt{\frac{2}{\pi r}}\exp\!\big(-(2t-r)^2/(2r)\big)$.
		Set
		\[
		E(n,m)=\sum_{j=0}^{m-1} p_{m-1}(j)^2\,p_N\!\Big(\tfrac N2+\Delta_j\Big),
		\qquad
		\Delta_j:=\ell-j-\tfrac N2.
		\]

Let \(N:=n-2m\), \(\ell:=\lfloor (n-m)/2\rfloor\), and write
\[
    \varepsilon:=\frac{n-m}{2}-\ell\in\{0,1/2\}.
\]
        
		We decompose
		\begin{align*}
		E(n,m)-\frac{2}{\pi\sqrt{(m-1)(2n-3m)}}
		&=
        \underbrace{\big(E(n,m)-\sum_J p_{m-1}^2 g_N\big)}_{T_1}
		+\underbrace{\big(\sum_J p_{m-1}^2 g_N-\sum_J g_{m-1}^2 g_N\big)}_{T_2}\\
        +\underbrace{\big(\sum_J g_{m-1}^2 g_N-\sum_\Z g_{m-1}^2 g_N\big)}_{T_3}&
		+\underbrace{\big(\sum_\Z g_{m-1}^2 g_N-\frac{2}{\pi\sqrt{(m-1)(2n-3m)}}\big)}_{T_4},
		\end{align*}
		where $J=\{0,\dots,m-1\}$ in $T_1,T_2$.
		
		\emph{1) $T_1$: replace $p_N$ by $g_N$ (uniform LLT).}
		The LLT Lemma~\ref{lem:LLT} yields $\sup_t|p_N(t)-g_N(t)|\le C_0 N^{-3/2}$, hence
		\[
		|T_1|\ \le\ C_0 N^{-3/2}\sum_{j=0}^{m-1}p_{m-1}(j)^2
		= C_0 N^{-3/2}\,p_{2m-2}(m-1).
		\]
		By the same LLT at $r=2m-2$, $p_{2m-2}(m-1)\le g_{2m-2}(m-1)+C_0(2m-2)^{-3/2}
		\le \frac{1}{\sqrt{\pi(m-1)}}+\frac{C_0}{2^{3/2}(m-1)^{3/2}}$.
		Since $N\ge n/3$,
		\[
		|T_1| \ \le\ \frac{C}{\sqrt{m}n^{3/2}}.
		\]
		
		\emph{2) $T_2$: replace $p_{m-1}$ by $g_{m-1}$.}
		Let $\delta_j:=p_{m-1}(j)-g_{m-1}(j)$. Then
		\[
		|T_2|
		= \Big|\sum_{j=0}^{m-1}\delta_j\,(p_{m-1}(j)+g_{m-1}(j))\,g_N\!\Big(\tfrac N2+\Delta_j\Big)\Big|
		\le \big(\sup_j|\delta_j|\big)\,\Sigma,
		\]
		with
		\[
		\Sigma:=\sum_{j=0}^{m-1}(p_{m-1}(j)+g_{m-1}(j))\,g_N\!\Big(\tfrac N2+\Delta_j\Big).
		\]
		Uniform LLT (Lemma~\ref{lem:LLT}) at scale $m-1$ gives $\sup_j|\delta_j|\le C_0 (m-1)^{-3/2}$.
		Moreover, upper bounding the average by the maximum,
		\[
		\sum_{j=0}^{m-1}p_{m-1}(j)\,g_N\!\Big(\tfrac N2+\Delta_j\Big)
		\le \sup_t g_N(t)\ \le\ \sqrt{\frac{2}{\pi N}},
		\]
		and similarly, by extending to $\mathbb Z$ and using the lattice Gaussian–Gaussian convolution,
		\[
		\sum_{j=0}^{m-1}g_{m-1}(j)\,g_N\!\Big(\tfrac N2+\Delta_j\Big)
		\le \sum_{j\in\mathbb Z} g_{m-1}(j)\,g_N(\ell-j)\ \le\ \frac{C'}{\sqrt{N}}.
		\]
		Therefore $\Sigma\le C/\sqrt{N}$, and
		\[
		|T_2|\ \le\ \frac{C_0}{(m-1)^{3/2}}\cdot \frac{C}{\sqrt{N}}
		\ \le\ \frac{C}{m^{3/2}\sqrt{n}}.
		\]

\emph{3) $T_3$: truncating the full-lattice Gaussian sum.}

Here
\[
T_3
=
\sum_{j=0}^{m-1} g_{m-1}(j)^2 g_N(\ell-j)
-
\sum_{j\in\mathbb Z} g_{m-1}(j)^2 g_N(\ell-j),
\]
where \(N=n-2m\) and \(\ell=\lfloor (n-m)/2\rfloor\). Put
\[
    \varepsilon:=\frac{n-m}{2}-\ell\in\{0,1/2\},
    \qquad
    a:=\frac{m-1}{2},
    \qquad
    b:=\frac m2-\varepsilon,
\]
and
\[
    \alpha:=\frac4{m-1},
    \qquad
    \beta:=\frac2N,
    \qquad
    \gamma:=\alpha+\beta.
\]
Then
\[
g_{m-1}(j)^2g_N(\ell-j)
=
C_{m,N}
\exp\{-\alpha(j-a)^2-\beta(j-b)^2\}
=
C'_{m,N}\exp\{-\gamma(j-\mu)^2\},
\]
where
\[
    \mu
    =
    \frac{\alpha a+\beta b}{\gamma}
    =
    \frac{(m-1)(2N+m-2\varepsilon)}{2(2N+m-1)}
\]
and
\[
    \sigma^2:=\frac1{2\gamma}
    =
    \frac{(m-1)N}{4(2N+m-1)}.
\]
Since \(m\le n/3\), we have \(N\ge m\). Hence, for all sufficiently large
\(m\),
\[
    \mu\ge c m,
    \qquad
    m-1-\mu\ge c m,
    \qquad
    \sigma^2\le C m
\]
for universal constants \(c,C>0\). Therefore both tails outside
\(\{0,\ldots,m-1\}\) are Gaussian tails at distance at least \(c\sqrt m\)
standard deviations. Consequently,
\[
    |T_3|
    \le
    C\frac{e^{-c m}}{\sqrt{mN}}
    \le
    C\frac{1}{\sqrt n\,m^{3/2}},
\]
where the last inequality uses \(N\asymp n\) for \(m\le n/3\) and
\(\sup_{x\ge1}xe^{-cx}<\infty\).

\emph{4) $T_4$: Gaussian triple product to the simple main term.}
		The full-lattice Gaussian sum has the exact form
		\[
		\sum_{j\in\mathbb Z} g_{m-1}(j)^2\,g_N(\ell-j)
		=\frac{2}{\pi\sqrt{(m-1)(2n-3m)}}\cdot A_{n,m}\,\Theta_{n,m},
		\]
		with
		\[
		A_{n,m}=\frac{\sqrt{2n-3m}}{\sqrt{2n-3m-1}}\,
		e^{-1/(2n-3m-1)}=1+ O(1/n),
		\]
		and
		\[
		\Theta_{n,m}=1+2\sum_{t\ge1}\exp\!\Big(-\frac{\pi^2 t^2 (m-1)N}{2(2n-3m-1)}\Big)\cos(2\pi t\mu).
		\]

When \(\varepsilon=0\), this is exactly Lemma~\ref{lem:triple-sum-theta}. When
\(\varepsilon=1/2\), the same calculation applies with the second Gaussian center
shifted from \(m/2\) to \(m/2-\varepsilon\). This changes the constant factor
\[
    \exp\!\left\{-\frac{\alpha\beta}{\alpha+\beta}(a-b)^2\right\}
\]
and the phase \(\mu\), but since \(|a-b|\le 1/2\) and
\[
    \frac{\alpha\beta}{\alpha+\beta}
    =
    O(1/n),
\]
the prefactor remains \(1+O(1/n)\). The bound on the theta correction is unchanged,
as it uses only absolute values.

Let $A = \frac{\pi^2 (m-1)N}{2(2n-3m-1)}$. Using the triangle inequality and $|\cos(\cdot)| \le 1$, we can bound the error term:
\begin{align*}
	|\Theta_{n,m}-1| &\le 2\sum_{t\ge1}\exp(-A t^2)
\end{align*}
Since $t^2 \ge t$ for $t \ge 1$, we can further bound this by a geometric series:
\begin{align*}
	|\Theta_{n,m}-1| &\le 2\sum_{t\ge1}\exp(-A t) = 2 \frac{\exp(-A)}{1-\exp(-A)}.
\end{align*}

Since \(m\le n/3\), we have \(N\asymp n\), and
\[
    A
    =
    \frac{\pi^2(m-1)N}{2(2n-3m-1)}
    =
    \Theta(m).
\]
Hence
\(
    |\Theta_{n,m}-1|\le C e^{-cm}.
\)
Consequently,
\(
    |A_{n,m}\Theta_{n,m}-1|
    \le C(n^{-1}+e^{-cm}),
\)
and therefore
\[
\begin{aligned}
|T_4|
&\le
\frac{C}{\sqrt{(m-1)(2n-3m)}}
\left(n^{-1}+e^{-cm}\right) \\
&\le
C\left(
\frac{1}{\sqrt m\,n^{3/2}}
+
\frac{1}{\sqrt n\,m^{3/2}}
\right),
\end{aligned}
\]
where the last inequality uses \(2n-3m\asymp n\) and \(e^{-cm}\le C/m\).

		\emph{5) Conclusion.}
		Adding the four bounds gives
		\(
		|T_1|+|T_2|+|T_3|+|T_4|= O(\frac{1}{\sqrt{m}n^{3/2}})+O(\frac{1}{\sqrt{n}m^{3/2}}),
		\)
		uniformly for \(m=\omega_n(1)\) and \(m\le n/3\). The first error term is asymptotically dominated by the second error term and hence absorbed. The result is obtained by recalling $\Cov(n,m)=E(n,m)/4$.
	\end{proof}

	\medskip
		
	\begin{theorem}[Monotonicity and minimizer]\label{thm:maj_mono-minimizer}
Let
\[
    \mathcal M_n:=\{m\in \mathbb N: m\mid n,\ 1\le m\le n/2\}.
\]
For all sufficiently large \(n\), the covariance is strictly decreasing over admissible
fold sizes up to \(n/3\): if \(m_1,m_2\in \mathcal M_n\) and
\[
    m_1<m_2\le n/3,
\]
then
\[
    \operatorname{Cov}(n,m_1)>\operatorname{Cov}(n,m_2).
\]
Moreover, along subsequences with \(6\mid n\),
\[
    \operatorname{Cov}(n,n/3)<\operatorname{Cov}(n,n/2).
\]
Consequently, along subsequences with \(3\mid n\), the fold covariance is minimized at \(k=3\), among all admissible equal-fold choices.
\end{theorem}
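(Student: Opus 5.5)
The plan is to combine the two asymptotic descriptions already available, the binomial form of Theorem~\ref{thm:maj_sublin} for small and moderate \(m\) and the large-\(m\) form of Theorem~\ref{thm:maj_large-m}, and to exploit that admissible fold sizes are divisors \(m=n/k\). Consecutive admissible values therefore differ by a relative factor of at least \((k+1)/k\), so the gaps between leading terms are large enough to beat the error terms. The comparison between \(n/3\) and \(n/2\) is a direct evaluation. By part~(D) at \(m=n/3\),
\[
\Cov(n,n/3)=\frac{1}{2\pi\sqrt{(n/3)\,n}}+o(n^{-1})=\frac{\sqrt3}{2\pi n}+o(n^{-1}),
\]
whereas the endpoint case of Theorem~\ref{thm:maj_small-k-mono} gives \(\Cov(n,n/2)=\frac{1}{\pi n}+O(n^{-2})\). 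Since \(\sqrt3/2<1\), the claim \(\Cov(n,n/3)<\Cov(n,n/2)\) follows for large \(n\) with \(6\mid n\). Combined with the monotonicity on \(m\le n/3\), this identifies \(k=3\) as the minimizer when \(3\mid n\).

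For monotonicity I split into regimes by the size of \(m_2\), and it suffices to compare consecutive admissible pairs.

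\emph{Small regime, \(m_2\le n^{3/5}\).} I use \(\Cov(n,m)=S_{m-1}/(2\sqrt{\pi(2n-3m)})+O(n^{-3/2}+m^{3/2}n^{-5/2})\). The central mass \(S_r\) is strictly decreasing in \(r\), with ratio \(S_{r+1}/S_r=(2r+1)/(2r+2)\), and \(S_r\asymp (r+1)^{-1/2}\). Hence \(S_{m_1-1}-S_{m_2-1}\gtrsim m_2^{-3/2}\) whenever \(m_1<m_2\). The factor \(1/\sqrt{2n-3m}\) increases with \(m\), but only by a relative amount \(O(m/n)\). That is dominated by the relative drop \(\gtrsim (m_2-m_1)/m_2\ge 1/m_2\) of \(S\) as long as \(m_2^2\ll n\); if instead \(m_2\gtrsim\sqrt n\), the divisor structure gives \(m_2-m_1\ge m_2/(k_2+1)\), i.e.\ a relative drop \(\gtrsim m_2/n\), hmm, which only ties with \(O(m/n)\), so in that subrange I switch to the large-\(m\) formula instead. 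For \(m_2\le \sqrt n\) the leading-term gap is \(\gtrsim m_2^{-3/2}n^{-1/2}\ge n^{-5/4}\), which beats the error \(O(n^{-3/2})\).

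\emph{Large regime, \(m_2\ge n^{1/2}\log n\).} I use \(\Cov(n,m)=\frac{1}{2\pi\sqrt{f(m)}}+O(n^{-1/2}m^{-3/2})\) with \(f(m)=(m-1)(2n-3m)\). On \(m\le n/3\) the function \(f\) is increasing, and \(\frac{d\log f}{d\log m}\approx 1-\frac{3m}{2n-3m}\).

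If \(m_2\le n/4\), this elasticity is at least \(2/5\). Since \(m_1\le m_2\,k_2/(k_2+1)\), we get \(\log f(m_2)-\log f(m_1)\gtrsim 1/k_2\). The leading terms then differ by \(\gtrsim \frac{\sqrt{k_2}}{n}\cdot\frac1{k_2}=\frac{\sqrt{m_2}}{n^{3/2}}\). This dominates \(n^{-1/2}m_1^{-3/2}\) provided \(m_1^{3/2}m_2^{1/2}\gg n\), which holds since \(m_1\ge m_2/2\gtrsim \sqrt n\log n\).

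If \(m_2=n/3\), the derivative of \(f\) vanishes to first order at \(n/3\), so I treat this pair separately. Here \(m_1\le n/4\), and the ratio \(f(n/4)/f(n/3)\to(5/16)/(1/3)<1\) gives a gap of constant relative size, i.e.\ of order \(1/n\), against errors \(O(n^{-2})\).

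\emph{Overlap and mixed pairs.} I take the intermediate window \(\sqrt n\le m_2\le n^{3/5}\) from whichever formula applies; part~(D) is uniform for \(m=\omega(1)\), so its error is smaller than the gap there as computed. For mixed pairs with \(m_1\) in the small regime and \(m_2\) in the large regime, the leading values differ by a constant factor or more, since \(\Cov\asymp (mn)^{-1/2}\) and \(m_2/m_1\ge2\). Hence the inequality is immediate.

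The main obstacle is bookkeeping uniformity: the comparison must hold simultaneously for all divisor pairs with \(n\) large. The delicate points are the flat spot of \(f\) at \(m=n/3\), handled by isolating the pair \((n/4,n/3)\), and matching the two error regimes in the middle window. There I would first try to use only part~(D)'s error \(O(n^{-1/2}m^{-3/2})\) against the gap \(\sqrt{m}/n^{3/2}\), which requires \(m\gg\sqrt n\). Below that threshold I would rely on the binomial form, where the gap \(m^{-3/2}n^{-1/2}\) beats \(n^{-3/2}\) as long as \(m\ll n^{2/3}\). The two ranges overlap, which closes the argument.
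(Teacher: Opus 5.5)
Your overall architecture is viable. It differs from the paper mainly in where you switch formulas. The paper applies the binomial form of Theorem~\ref{thm:maj_sublin} on the whole range $m_2=o(n)$, feeding in the divisor bound $m_2-m_1\ge m_1m_2/n$ (the same arithmetic input as your $m_2-m_1\ge m_2/(k_2+1)$). It uses part~(D) only for $m\asymp n$, where $k$ is bounded and it compares $k/\sqrt{2k-3}$; this is equivalent to your elasticity argument plus the isolated pair $(n/4,n/3)$.

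As written, however, your case split leaves the window $m_2\asymp\sqrt n$ uncovered. In the small regime you bound the relative growth of $(2n-3m)^{-1/2}$ by $O(m_2/n)$, but the relative drop of $S_{m-1}$ only by $1/m_2$. That is, with $d:=m_2-m_1$, you use $d\le m_2$ on one side and $d\ge 1$ on the other. You conclude the two tie once $m_2\gtrsim\sqrt n$ and hand that range to part~(D).

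Part~(D) cannot take it. For $m_1\asymp m_2\asymp\sqrt n$, its error $n^{-1/2}m_1^{-3/2}\asymp n^{-5/4}$ is of the same order as your gap $\sqrt{m_2}\,n^{-3/2}\asymp n^{-5/4}$; your own condition $m_1^{3/2}m_2^{1/2}\gg n$ fails there. So the claim in your overlap paragraph, that (D)'s error is below the gap on $[\sqrt n,n^{3/5}]$, is false at the lower end. Your last paragraph then asserts a binomial-form gap $m^{-3/2}n^{-1/2}$ up to $n^{2/3}$. That contradicts your earlier "tie" and is not justified anywhere in the proposal.

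The repair is to compare with the same $d$ on both sides:
\[
\log\frac{S_{m_2-1}}{S_{m_1-1}}\le-\sum_{t=m_1}^{m_2-1}\frac{1}{2t}\le-\frac{d}{2m_2},\qquad \frac12\log\frac{2n-3m_1}{2n-3m_2}\le\frac{3d}{2n}\quad(m_2\le n/3).
\]
Hence for $m_2\le n/6$ the leading terms $\Phi(n,m)=S_{m-1}/(2\sqrt{\pi(2n-3m)})$ satisfy $\Phi(n,m_1)-\Phi(n,m_2)\gtrsim d\,m_2^{-3/2}n^{-1/2}$. With only $d\ge 1$, this beats $O(n^{-3/2}+m_2^{3/2}n^{-5/2})$ whenever $m_2\ll n^{2/3}$, so the overlap with your large regime $m_2\ge\sqrt n\log n$ becomes genuine. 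With $d\ge m_2/(k_2+1)\ge m_2^2/(2n)$, it already works for all $m_2=o(n)$. That is the paper's route, and it leaves part~(D) needed only for $m\asymp n$.

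A smaller bookkeeping point: $m_1\ge m_2/2$ is not automatic for consecutive admissible sizes. For $n=6p$ with $p$ prime, $6<p$ are consecutive. Such pairs, and pairs $(m_1,n/3)$ with $m_1=o(n)$, must go through your constant-factor argument. There the error should be measured relative to $\operatorname{Cov}(n,m_1)\asymp(m_1n)^{-1/2}$, not on the $n^{-2}$ scale.
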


\begin{proof}
We write \(a_n\lesssim b_n\) if \(a_n\le Cb_n\) for a universal constant \(C\).
We first prove the monotonicity for admissible \(m_1<m_2\le n/3\).

Suppose first that \(m_2=o(n)\). We use the combinatorial estimate from
Theorem~\ref{thm:maj_sublin}, which is precise in the sublinear regime. Define
\[
    S_{m-1}:=2^{-2m+2}\binom{2m-2}{m-1},
    \qquad
    \Phi(n,m):=
    S_{m-1}\frac{1}{2\sqrt{\pi(2n-3m)}}.
\]
Then, uniformly for \(1\le m\le n/3\),
\begin{equation}\label{eq:A.11}
    \operatorname{Cov}(n,m)
    =
    \Phi(n,m)
    +
    O\!\left(n^{-3/2}+m^{3/2}n^{-5/2}\right).
\end{equation}

Let
\[
    d:=m_2-m_1>0.
\]
Since \(m_1,m_2\mid n\), we have
\[
    \operatorname{lcm}(m_1,m_2)\mid n.
\]
Moreover, \(\gcd(m_1,m_2)\mid d\), hence \(\gcd(m_1,m_2)\le d\). Therefore
\[
    n\ge \operatorname{lcm}(m_1,m_2)
    =
    \frac{m_1m_2}{\gcd(m_1,m_2)}
    \ge
    \frac{m_1m_2}{d},
\]
and so
\(
    d\ge \frac{m_1m_2}{n}\ge \frac{m_1^2}{n}.
\)
Of course also \(d\ge 1\). Consequently,
\begin{equation}\label{eq:A.13}
    \frac d{m_1}\ge \max\left\{\frac1{m_1},\frac{m_1}{n}\right\}.
\end{equation}

We now compare the leading terms \(\Phi(n,m_1)\) and \(\Phi(n,m_2)\). Using the exact ratio
\[
    \frac{S_t}{S_{t-1}}=\frac{2t-1}{2t}=1-\frac1{2t},
\]
we obtain
\[
    \frac{S_{m_2-1}}{S_{m_1-1}}
    =
    \prod_{r=m_1}^{m_2-1}\left(1-\frac1{2r}\right)
    \le
    \exp\left(-\frac12\sum_{r=m_1}^{m_2-1}\frac1r\right).
\]
Hence, for \(m_2=o(n)\),
\[
\begin{aligned}
    \log\frac{\Phi(n,m_2)}{\Phi(n,m_1)}
    &=
    \log\frac{S_{m_2-1}}{S_{m_1-1}}
    +
    \frac12\log\frac{2n-3m_1}{2n-3m_2}                                      \\
    &\le
    -c_1\min\left\{1,\frac d{m_1}\right\}
    +
    C_1\frac d n                                                         \\
    &\le
    -c_2\min\left\{1,\frac d{m_1}\right\},
\end{aligned}
\]
for all sufficiently large \(n\), since \(m_2=o(n)\). Therefore
\begin{equation}\label{eq:A.14}
    \Phi(n,m_1)-\Phi(n,m_2)
    \ge
    c_3\,\Phi(n,m_1)\min\left\{1,\frac d{m_1}\right\}.
\end{equation}

It remains to check that the error in \eqref{eq:A.11} is negligible relative to the
right-hand side of \eqref{eq:A.14}. Since
\[
    \Phi(n,m_1)\asymp \frac{1}{\sqrt{m_1 n}},
\]
and since
\(
    \frac d{m_1}
    \ge
    \max\left\{\frac1{m_1},\frac{m_1}{n}\right\}
\) due to \eqref{eq:A.13},
we have, in the case \(d\le m_1\),
\[
\frac{n^{-3/2}}
     {\Phi(n,m_1)(d/m_1)}
\lesssim
\frac{\sqrt{m_1}/n}
     {\max\{1/m_1,m_1/n\}}
=o(1).
\]
Also \(d\le m_1\) implies \(m_2\le 2m_1\), and therefore
\[
\frac{m_2^{3/2}n^{-5/2}}
     {\Phi(n,m_1)(d/m_1)}
\lesssim
\frac{m_1^2/n^2}
     {\max\{1/m_1,m_1/n\}}
=o(1).
\]
If \(d>m_1\), then \(\min\{1,d/m_1\}=1\), and
\[
    \frac{n^{-3/2}}{\Phi(n,m_1)}
    \lesssim
    \frac{\sqrt{m_1}}{n}
    =
    o(1),
\]
while, using \(m_1\le m_2=o(n)\),
\[
    \frac{m_2^{3/2}n^{-5/2}}{\Phi(n,m_1)}
    \lesssim
    \frac{m_2^{3/2}\sqrt{m_1}}{n^2}
    \le
    \left(\frac{m_2}{n}\right)^2
    =
    o(1).
\]
Combining these bounds with \eqref{eq:A.11} and \eqref{eq:A.14} gives
\[
    \operatorname{Cov}(n,m_1)>\operatorname{Cov}(n,m_2)
\]
throughout the sublinear regime \(m_2=o(n)\).

Next suppose \(m_1=o(n)\) but \(m_2\) is not \(o(n)\). Then the preceding estimates give
\[
    \operatorname{Cov}(n,m_1)\asymp \frac1{\sqrt{m_1n}},
\]
whereas Theorem~\ref{thm:maj_large-m} gives \(\operatorname{Cov}(n,m_2)=O(n^{-1})\). Since
\(m_1=o(n)\), we have \(1/\sqrt{m_1n}\gg 1/n\), and the desired inequality follows.

It remains only to consider the regime in which \(m_1\) is of order \(n\). Write
\[
    m_i=\frac n{k_i},\qquad i=1,2.
\]
Since \(m_1<m_2\le n/3\), we have integers \(k_1>k_2\ge 3\). Along any such
linear-regime subsequence, the possible \(k_i\)'s are bounded, so after passing to a
subsequence we may regard \(k_1,k_2\) as fixed. Theorem~\ref{thm:maj_large-m} yields
\[
    \operatorname{Cov}\!\left(n,\frac n k\right)
    =
    \frac{k}{2\pi n\sqrt{2k-3}}
    +
    O(n^{-2})
\]
for every fixed integer \(k\ge 3\). The function
\[
    h(k):=\frac{k}{\sqrt{2k-3}}
\]
is strictly increasing on the integers \(k\ge 3\): indeed \(h(4)>h(3)\), and for
real \(k>3\),
\[
    h'(k)=\frac{k-3}{(2k-3)^{3/2}}>0.
\]
Thus \(k_1>k_2\) implies
\[
    \operatorname{Cov}\!\left(n,\frac n{k_1}\right)
    >
    \operatorname{Cov}\!\left(n,\frac n{k_2}\right)
\]
for all sufficiently large \(n\). This proves the claimed monotonicity over
admissible \(m\le n/3\).

Finally assume \(6\mid n\). Then Theorem~\ref{thm:maj_large-m} at \(m=n/3\) and Theorem~\ref{thm:maj_small-k-mono}(B) at
\(m=n/2\) give

\[
    \operatorname{Cov}(n,n/3)
    =
    \frac{\sqrt 3}{2\pi n}
    +
    O(n^{-2}),
\]
and
\[
    \operatorname{Cov}(n,n/2)
    =
    \frac1{\pi n}
    +
    O(n^{-2}).
\]
Since \(\sqrt3/2<1\), it follows that
\[
    \operatorname{Cov}(n,n/3)<\operatorname{Cov}(n,n/2)
\]
for all sufficiently large \(n\) with \(6\mid n\).

If \(3\mid n\), then \(m=n/3\) is admissible. The preceding monotonicity shows that
it minimizes the covariance among all admissible \(m\le n/3\). The only admissible
fold size in \((n/3,n/2]\), if it exists, is \(m=n/2\), corresponding to \(k=2\);
when \(6\mid n\) the comparison above rules it out, and when \(n\) is odd it is not
admissible. Hence \(m=n/3\), equivalently \(k=3\), is the admissible covariance
minimizer along \(3\mid n\).
\end{proof}

\newpage

\section{Proof of Lemma~\ref{lem:anticorr}}\label{proof:anti}
\begin{proof}
Consider the following setup. Let $\mathcal X=[0,1]$, $\mathcal Y=\{0,1\}$, with input distribution $\mathcal D$ over $\mathcal{X}\times\mathcal{Y}$, where the marginal over $\mathcal{X}$ is uniform and $y=f(x)$ where 
 $f = \ind{x > 1/2}$. Consider the algorithm that outputs $\mathcal A(S^n)=\ind{1/2-p/2< ~\cdot~< 1-p/2}$ with $p=p(S)=\sum_i y_i/n$, and $\mathcal A(S^{n-m})=h_0$, where $h_0$ is the constant zero hypothesis. Then, $\widehat L^{(k)}_{\rm CV}=\sum_i \widehat L_i^{(k)}/k=\sum_i y_i/n=p(S)$ and $L(A(S^n))=p(S)$, so that the MSE is zero. For the loss-stability term, note that \(L(A(S^{n-m}))=1/2\). Hence, for \(n=2\) and \(m=1\),
\[
\mathbb E\left[
\left|L(A(S^n))-L(A(S^{n-m}))\right|
\right]
=
\mathbb E_{Y\sim\operatorname{Bin}(2,1/2)}
\left|
\frac{Y}{2}-\frac12
\right|
=
\frac14.
\]

\end{proof}

\section{Error in Theorem 5.3 of \citet{kearns}}\label{app:err}

Let us first recall their notion of stability in our notation. We say that a deterministic algorithm $\mathcal{A}$ has error stability $(\beta_1, \beta_2)$ if $\mathbb P_{S^{n-1}, (x,y)}[|L(\mathcal{A}( S^n)) - L(\mathcal{A}( S^{n-1}))| \ge \beta_2] \le \beta_1$ where $S^n = S^{n-1} \circ {(x,y)}$, and both $\beta_1$ and $\beta_2$ may be functions of $n$.

Consider the proof of their Theorem~5.3. There, they define the random variable $\chi(S^n) = \widehat L_{\text{CV}}^{(n)} - L(\mathcal{A}( S^n))$ and assume without loss of generality that with probability at least $\beta_1/2$, $L(\mathcal{A} (S^{n-1})) - L(\mathcal{A}( S^n)) \ge \beta_2$.

Next, their Lemma 4.1 correctly asserts that the expected cross-validation estimate equals the expected estimate of a single hold-out set, i.e., $\mathbb{E}_{S^n}[\chi(S^n)]=\mathbb{E}_{S^n}[L(\mathcal{A} (S^{n-1})) - L(\mathcal{A}( S^n))]$. Combined with the fact that with probability at least $\beta_1/2$, $L(\mathcal{A}( S^{n-1})) - L(\mathcal{A}( S^n) )\ge \beta_2$, the authors then claim that $\mathbf{E}_{S^n}[\chi(S^n)] \ge \frac{\beta_1}{2} \cdot \beta_2.$

This is incorrect, since $L(\mathcal{A}( S^{n-1})) - L(\mathcal{A}( S^n)) \ge \beta_2$ on part of the sample space does not rule out that this quantity can also be negative at other times. To illustrate this, let us consider an extreme case where $\beta_1=\beta_2=1$ by assuming that $\mathbb P(\{L(\mathcal{A}( S^{n-1})) - L(\mathcal{A}(S^n))=1\})=\beta_1/2=1/2$. This assumption does not rule out the possibility that $\mathbb P(\{L(\mathcal{A}( S^{n-1})) - L(\mathcal{A}( S^n))=-1\})=1/2$. In that case, $\mathbb{E}_{S^n}[\chi(S^n)]=0$, violating the alleged lower bound $\frac{\beta_1}{2} \cdot \beta_2=1/2$.

This directly contradicts our Lemma~\ref{lem:anticorr} because non-zero loss stability according to our Definition~\ref{def:lossstab} implies a lower bound on their high-probability error stability parameters, yet we prove in Lemma~\ref{lem:anticorr} that one can have non-zero loss stability and simultaneously zero MSE (which necessitates $\mathbf{E}_{S^n}[\chi(S^n)]=0$).

\section{Error in Theorem~2 of \citet{kale2011cross}}\label{app:errKale}
The key ingredient for deriving the main result of \citet[Theorem 2]{kale2011cross} is to obtain an upper bound on $\Cov_{S^n}(\widehat{L}_1^{(k)} - L_1^{(k)}, \widehat{L}_2^{(k)} - L_2^{(k)})$ (in their notation $\Cov_U(\text{gen}_1, \text{gen}_2)$) that scales linearly with a parameter measuring a certain notion of algorithmic stability (\emph{``mean square stability''}). To do so, the supposed identity $\E_{S_2}[\widehat{L}_1^{(k)}-L_1^{(k)} \mid S_1, S_3,\dots, S_k] = 0$ (in their notation $\E_{T'}[\text{gen}_1 \mid S, T] = 0$) is used twice. Define $S' := S^n \setminus S_2$ and $S'' := S^n \setminus (S_1 \circ S_2)$. One can see that
$$
\begin{aligned}
\E_{S_2}[\widehat{L}_1^{(k)} - L_1^{(k)} \mid S_1, S_3,\dots, S_k] &= \E_{S_2}\left[\frac{1}{m} \sum_{z' \in S_1} \ell(\mathcal{A}(S^n_{-1}), z') - \E_{z}[\ell(\mathcal{A}(S^n_{-1}), z)] \;\Big|\; S'\right] \\
&= \E_{S_2}\left[\frac{1}{m} \sum_{z' \in S_1} \ell(\mathcal{A}(S^n_{-1}), z') \;\Big|\; S'\right] - \E_{S_2, z}[\ell(\mathcal{A}(S^n_{-1}), z) \mid S'']
\end{aligned}
$$
where the two terms in the last line are functions of $S'$ and $S''$, respectively, and their difference is non-zero in general.

\end{document}